\theoremstyle{definition}
\newtheorem{exmp}{Example}[section]
\newcommand{\Z}{\mathbb{Z}}
\newcommand{\Q}{\mathbb{Q}}
\newcommand{\R}{\mathbb{R}}
\newcommand{\D}{\mathbb{D}}
\newcommand{\s}{\mathbb{S}}
\newcommand{\tw}{\textrm{Twist}}
\newcommand{\id}{\textrm{id}}
\newcommand{\Id}{\operatorname{Id}}
\newcommand{\Diff}{\operatorname{Diff}}
\newcommand{\ob}{\textrm{OB}}
\newcommand{\so}{\mathrm{SO}}
\newtheorem{proposition}{Proposition}[section]
\newtheorem{theorem}[proposition]{Theorem}
\newtheorem{lemma}[proposition]{Lemma}
\newtheorem{corollary}[proposition]{Corollary}
\newtheorem{remark}[proposition]{Remark}
\begin{document}
	
	\title{Twist maps and codimension--$1$ spun embeddings}
	
	\subjclass{}
	
	\keywords{embedding, open book, diffeomorphism}
	
	\author{Shital Lawande}
	\address{IAI TCG CREST Kolkata, and Ramakrishna Mission Vivekananda Education and Research Institute, Belur Math}
	\email{shital.lawande@tcgcrest.org}
	
	\author{Kuldeep Saha}
	\address{IAI TCG CREST Kolkata, and Academy of Scientific and Innovative Research, Gazhiabad}
	\email{kuldeep.saha@gmail.com, kuldeep.saha@tcgcrest.org}

	\begin{abstract}
		
		We study codimension--$1$ embeddings preserving open book structures. In particular, we prove that every closed orientable $3$-manifold admits a codimension--$1$ spun embedding in a finite connected sum of $\s^2 \times \s^2$s and $\s^2 \widetilde{\times} \s^2$s. We discuss some explicit constructions of planar open books on $3$-manifolds and their codimension--$1$ spun embeddings. To construct these embeddings, we use sphere twist maps and push maps. We also give a simple proof for non-triviality of the twist map along a non-separating $n$-sphere in the group of orientation preserving diffeomorphisms of $\s^1 \times \s^n \setminus int(\D^{n+1})$, relative to the boundary.   
		
	\end{abstract}
	
	\maketitle

	\section{Introduction}

		 Embedding of manifolds has been a fundamental topic in geometric topology. An interesting subtopic is to study embeddings that preserve certain geometric/topological structures on the manifolds. In the present article we focus on codimension--$1$ smooth embeddings of closed manifolds, that preserve open book structures. An open book decomposition of a manifold $M^m$ is a pair $(V^{m-1},h)$, such that $M^m$ is diffeomorphic to the quotient space $\mathcal{MT}(V^{m-1}, h) \cup_{id} \partial V^{m-1} \times D^2$. Here, $V^{m-1}$, called \emph{page}, is a manifold with boundary. The map $h$, called \emph{monodromy}, is a diffeomorphism of $V^{m-1}$ that restricts to identity near the boundary $\partial V$, and $\mathcal{MT}(V^{m-1}, h)$ denotes the mapping torus of $h$. We denote such an open book by $\textrm{OB}(V,h)$. It is well known that every closed, orientable, odd dimensional manifold admits open book decomposition. For even manifolds necessary and sufficient conditions are known except in dimension $4$. An embedding $f$ of $\textrm{OB}(V_1,h_1)$ in $\textrm{OB}(V_2,h_2)$ is called an \emph{spun embedding}, if $f$ restricts to a proper embedding of $V_1$ in $V_2$ and $h_2 \circ f = f \circ h_1$ (up to isotopy).

		 \noindent The problem of spun embedding is most interesting for lower codimensions of embedding. In particular, the codimension $2$ case has recieved much attention in recent times (eg. \cite{EL}, \cite{pps}, \cite{saha}, \cite{ls1}). In this article, we initiate a study of codimension--$1$ spun embeddings of closed oriented manifolds. We first prove a codimension--$1$ open book embedding result for closed oriented $3$-manifolds. By Lickorish \cite{lickorish} and Wallace \cite{wallace}, every closed oriented $3$-manifold $M$ can be obtained from $\s^3$ by $\pm1$-surgery along an $n$-component link $\mathcal{L}_M$ of unknots. Given such a pair $(M,\mathcal{L}_M)$, one can construct a planar open book decomposition of $M^3$ with page $\Sigma_{0,n+1}$ (see section \ref{planarob}). Let $M$ be a closed oriented $3$-manifold and let $\ob(\Sigma_{0,n+1}, \phi_M)$ be a planar open book decomposition of $M$. Thus, every closed orientable $3$-manifold admits a planar open book decomposition obtained from a surgery diagram.

	\begin{theorem} \label{thm0}
		
	$M = \textrm{OB}(\Sigma_{0,n+1},\phi_M)$ open book embeds in the $4$-manifold $W_{i,j} = (\#^i\s^2 \times \s^2) \# (\#^j \s^2 \widetilde{\times}\s^2)$, for some $i,j \in \mathbb{N}$, such that $i + j = n$. 
		
	\end{theorem}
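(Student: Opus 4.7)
The plan is to realize some $W_{i,j}$ as an open book $\textrm{OB}(W^3,\Phi_M)$ whose page $W^3$ properly contains $\Sigma_{0,n+1}$ and whose monodromy $\Phi_M$ restricts, up to isotopy, to $\phi_M$ on that subpage; the spun embedding will then follow directly from the definition.

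For the page thickening I would take $W^3 = \natural^{\,n}(\s^2 \times I)$. Inside each summand $\s^2 \times I$ the equatorial annulus $A = \s^1 \times I$ is properly embedded, and iterated boundary--connect--summing of these annuli reproduces $\Sigma_{0,n+1}\hookrightarrow W^3$ as a proper codimension--$1$ surface. The model computation $\textrm{OB}(\s^2 \times I,\,\id) \iso \s^2 \times \s^2$ is direct: the mapping torus is $\s^2 \times (I\times \s^1)$, and gluing the two boundary $\s^2 \times \s^1$ pieces to $\partial(\s^2\times I)\times D^2$ yields $\s^2 \times (A\cup D^2 \cup D^2) = \s^2\times\s^2$. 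Combined with the formula $\textrm{OB}(V_1\natural V_2,\, h_1\natural h_2) = \textrm{OB}(V_1,h_1)\#\textrm{OB}(V_2,h_2)$ this gives $\textrm{OB}(W^3,\id) \iso \#^n(\s^2\times\s^2)$.

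Next I would use the planar open book construction arising from $\pm 1$--surgery on the unknot link $\SL_M$ to write $\phi_M = \tau_{c_1}^{\varepsilon_1}\cdots \tau_{c_n}^{\varepsilon_n}$ as a product of Dehn twists along simple closed curves $c_i \subset \Sigma_{0,n+1}$, one per surgery component. For each $c_i$ I would inflate $c_i$ to a $2$--sphere $S_i \subset W^3$, built by taking the middle spheres $\s^2\times\{1/2\}$ of the summands lying on one side of $c_i$ and joining them with tubes through the boundary--connect--sum bridges, arranged so that $S_i \cap \Sigma_{0,n+1} = c_i$ transversally. The key local fact I would use is that the sphere twist $T_{S_i}$, defined via the nontrivial element of $\pi_1(\so(3))$ applied along a collar of $S_i$, restricts to $\tau_{c_i}$ on the equatorial slice because the $2\pi$ rotation picks up exactly one full twist on the equator. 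Setting $\Phi_M = \prod_i T_{S_i}^{\varepsilon_i}$ gives a diffeomorphism of $W^3$ fixing $\partial W^3$ with $\Phi_M|_{\Sigma_{0,n+1}}\simeq \phi_M$, producing the required spun embedding $\textrm{OB}(\Sigma_{0,n+1},\phi_M) \hookrightarrow \textrm{OB}(W^3,\Phi_M)$.

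The main obstacle will be identifying $\textrm{OB}(W^3,\Phi_M)$ with some $W_{i,j}$. My plan is to reduce this to the local computation $\textrm{OB}(\s^2\times I,\, T_S) \iso \s^2 \widetilde{\times}\s^2$, so that each sphere--twist factor in the monodromy converts one $\s^2\times\s^2$ summand into an $\s^2\widetilde{\times}\s^2$ summand, giving $i+j=n$ with $j$ counting the switched summands. To establish the local fact, I would analyze $\mathcal{MT}(T_S)$ as an $\s^2$--bundle over $I\times \s^1$ whose clutching along one boundary circle realizes the nontrivial element of $\pi_1(\so(3))$; capping off with $\partial(\s^2\times I)\times D^2$ then yields the nontrivial $\s^2$--bundle over $\s^2$. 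An alternative route is to show simple connectedness, compute the intersection form, and invoke Freedman to detect non--spinness arising from the twist. The technical heart of the proof lies in verifying that in the connect--sum picture the $T_{S_i}$ can be made disjointly supported --- for which the push maps advertised in the introduction should be used to isotope each $S_i$ into its own $\s^2\times I$ summand --- so that the local conversion $\s^2\times\s^2 \rightsquigarrow \s^2\widetilde{\times}\s^2$ applies independently summand by summand.
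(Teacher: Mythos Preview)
Your setup matches the paper's: same page $V_n=\natural^{\,n}(\s^2\times I)$, same equatorial embedding of $\Sigma_{0,n+1}$, same local identifications $\ob(\s^2\times I,\id)=\s^2\times\s^2$ and $\ob(\s^2\times I,T_S)=\s^2\widetilde{\times}\s^2$ (this is Lemma~\ref{nospinlemma}), and the same observation that a sphere twist restricts to a Dehn twist on the equator. The divergence is in what you call the ``technical heart,'' and there the plan does not work.

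First, the monodromy $\phi_M$ coming from the Onaran construction is \emph{not} a product of $n$ Dehn twists ``one per surgery component.'' It is a word in twists along the boundary--parallel curves $\delta_q$ \emph{and} along curves $\gamma_{i_1\cdots i_k}$ enclosing several inner boundary components, generally with far more than $n$ factors. Second, your plan to isotope each inflated sphere into its own $\s^2\times I$ summand is obstructed homologically: the sphere obtained by tubing the $i_1$st through $i_k$th core spheres represents $[S_{i_1}]+\cdots+[S_{i_k}]\in H_2(V_n)$, so it is not isotopic to any single core sphere, and distinct twist--curves typically share summands. Push maps (which in this paper move \emph{boundary} spheres around loops) do not fix this.

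The paper replaces the hoped--for geometric disjointness by an algebraic reduction. The twist subgroup $\tw(V_n)$ is abelian, each $\sigma_{S_q}$ has order $2$, and by Proposition~\ref{twistprop} (Hatcher--Wahl) one has $\sigma_{S_{i_1\cdots i_k}}=\sigma_{S_{i_1}}\cdots\sigma_{S_{i_k}}$; hence $\tw(V_n)\cong(\Z_2)^n$. Rewriting every sphere twist in $\Phi_M$ in terms of the generators $\sigma_{S_1},\dots,\sigma_{S_n}$ collapses the whole word to $\prod_{q=1}^{n}\sigma_{S_q}^{\beta_q}$ for some integers $\beta_q$. Now the monodromy genuinely is supported on the $n$ disjoint core cylinders, so $\ob(V_n,\Phi_M)=\#_{q=1}^{n}\ob(\s^2\times I,\sigma_{S_q}^{\beta_q})=W_{i,j}$ with $i$ (resp.\ $j$) the number of even (resp.\ odd) $\beta_q$. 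This algebraic step is the missing idea in your proposal.
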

	
	\noindent The numbers $i$ and $j$ in Theorem \ref{thm0} are determined by a Dehn twist presentation of the monodromy map $\phi_M$ (see section \ref{codim1spunembed}). 
	
	\noindent In section \ref{sec2}, we discuss the necessary background materials. We also give a proof of the fact that every finitely presented group can be realized by a 4-dimensional open book. This has the following interesting consequence (see Corollary \ref{cor01}).
	
	\begin{theorem} 
		Every finitely presented group is the fundamental group of a closed oriented $4$-manifold having simplicial volume zero. 
	\end{theorem}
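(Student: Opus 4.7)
\medskip
\noindent\textbf{Proof plan.} The strategy has two steps: a \emph{realization step} and a \emph{vanishing step}. For the realization step, I would simply invoke the result (to be proved in Section \ref{sec2}) that every finitely presented group $G$ arises as $\pi_1(M^4)$ for some closed oriented $4$-manifold $M^4$ carrying an open book decomposition $M^4 = \textrm{OB}(V^3,h)$, with $V^3$ a compact $3$-manifold with boundary and $h \in \textrm{Diff}(V^3,\partial V^3)$. The whole content of the corollary is therefore the vanishing step: showing $\|M^4\| = 0$ for any such open book $4$-manifold.

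For the vanishing, I would exploit the geometric decomposition
\[
  M^4 \;=\; \mathcal{MT}(V^3, h)\;\cup_{\partial V \times S^1}\;\bigl(\partial V \times D^2\bigr),
\]
and show that $M^4$ carries a non-trivial smooth $S^1$-action (or, more weakly, a polarised $F$-structure in the sense of Cheeger--Gromov). The neighborhood $\partial V \times D^2$ of the binding carries an effective $S^1$-action by rotation of the $D^2$-factor; this action is free away from the binding $\partial V \times \{0\}$ and restricts on $\partial V \times S^1$ to translation in the $S^1$-direction. On the mapping-torus side, $\mathcal{MT}(V^3,h)$ fibres over $S^1$, giving a circle-valued function whose translation action along the fibres of $\pi \colon M^4\setminus \partial V \to S^1$ agrees, on the common boundary $\partial V \times S^1$, with the action coming from the binding neighbourhood. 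Gluing these pieces yields an $F$-structure on all of $M^4$. The conclusion $\|M^4\|=0$ then follows from the Gromov--Cheeger--Gromov vanishing theorem (or, when the pieces assemble to a genuine $S^1$-action, from Yano's theorem).

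\medskip
\noindent\textbf{Main obstacle.} The subtle point is the compatibility at the gluing interface $\partial V \times S^1$: the $S^1$-action on the binding neighbourhood $\partial V \times D^2$ rotates the $D^2$-factor, while the natural $S^1$-action on $\mathcal{MT}(V,h)$ comes from translation in the mapping-torus direction, and a priori the monodromy $h$ need not commute with any $S^1$-symmetry of $V^3$. Hence one cannot expect a global smooth $S^1$-action on $M^4$ without extra assumptions on $(V,h)$. Working instead with an $F$-structure bypasses this: one only needs locally defined torus actions on an open cover, together with compatibility conditions on overlaps. The technical heart of the proof is thus verifying, from the stratification induced by the open book (binding neighbourhood, mapping torus complement, and a collar interpolation between them), that the Cheeger--Gromov axioms for a polarised $F$-structure hold; once this is done, the simplicial volume vanishes, and combined with the realisation step the corollary follows.
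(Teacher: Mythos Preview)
Your realization step matches the paper exactly: both invoke the result (Theorem~\ref{thmfund4}) that every finitely presented group is $\pi_1$ of a $4$-dimensional open book. For the vanishing step, however, the paper does not supply an argument at all: it simply cites Kastenholz's recent theorem \cite{tk} that every closed $4$-manifold admitting an open book decomposition has simplicial volume zero, and the corollary follows immediately.

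Your attempt to \emph{prove} the vanishing step via an $F$-structure has a genuine gap, and it lies exactly where you flag the ``main obstacle'' but then wave it away. The mapping torus $\mathcal{MT}(V,h)$ does not, in general, carry a polarised $F$-structure. Fibering over $S^1$ gives only the suspension flow---an $\mathbb{R}$-action, not an $S^1$-action---and passing to the Cheeger--Gromov framework does not rescue this: on a small product chart $W\times I\subset \mathcal{MT}(V,h)$ there is simply no torus acting unless $W\subset V$ itself already has one, which you have no control over. More decisively, your outline is dimension-agnostic: nothing in it uses that the page is $3$-dimensional. But in dimension $3$ every closed oriented manifold has an open book, while hyperbolic $3$-manifolds have positive simplicial volume. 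Concretely, for $\phi$ pseudo-Anosov on $\Sigma_{g,1}$ the mapping torus $\mathcal{MT}(\Sigma_{g,1},\phi)$ is hyperbolic with positive Gromov norm and hence admits no $F$-structure, so $\textrm{OB}(\Sigma_{g,1},\phi)$ cannot acquire one by your recipe. Whatever Kastenholz's argument is, it must exploit something specific to dimension $4$ that your sketch does not capture; you should either cite \cite{tk} as the paper does, or identify and use that dimension-specific input.
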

	
	\noindent In section $3$, we construct some explicit codimension--$1$ spun embeddings of the lens spaces (see Theorem \ref{lensembed}), some small Seifert spaces and the Poincare homology $3$-sphere in section \ref{codim1spunembed}. We also give an example of codimension--$1$ spun embedding of $4$-manifold in $\s^5$ (see section \ref{spunembeds5}).

	Our main ingredients to prove the codimension--$1$ spun embedding results are \emph{twist} maps. In particular, we use the \emph{sphere twist map} and the \emph{torus twist map}, which we also call the \emph{push map}. The twist maps are an interesting family of maps that frequently occurs in the study of mapping class groups of $3$-manifolds. They also play an important role in constructing explicit descriptions of even dimensional open books (see \cite{hsueh1} and \cite{hsueh2}). In general, it is not easy to show non-triviality of a sphere twist, as it can not be detected by most of the basic invariants in algebraic topology. In section \ref{twistnontriviality}, we give a simple proof of non-triviality of the sphere twist map along a non-separating embeddded $\s^{n-1}$ in an $n$-manifold, using open book techniques. See section \ref{spheretwist} for definition of a sphere twist. Let $V^n$ be a connected orientable manifold with non-empty boundary that admits a proper embedding in $\D^{n+2}$ ($n \geq 2$). Let $\sigma_{n-1}$ denote the twist along the non-separating sphere $\{*\}\times \s^{n-1}$ in $\s^1 \times \s^{n-1}$. Then we show the following.
	
	\begin{theorem}\label{thm2}
		
		The sphere twist $\sigma_{n-1}$ is not isotopic to identity in $Diff^{+}_\partial (\s^1 \times \s^{n-1} \# V)$.
		
	\end{theorem}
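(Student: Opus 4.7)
The plan is to argue by contradiction using open book decompositions. Suppose $\sigma_{n-1}$ is isotopic to the identity in $\Diff^+_\partial(V')$, where $V' = \s^1 \times \s^{n-1} \# V$. Since the diffeomorphism type of an open book depends only on the monodromy up to isotopy rel boundary, this assumption forces
\[
\textrm{OB}(V', \sigma_{n-1}) \;\cong\; \textrm{OB}(V', \id) \;=\; V' \times \s^1 \;\cup_{\partial V' \times \s^1}\; \partial V' \times \D^2
\]
as closed oriented $(n+1)$-manifolds. I would then aim to contradict this by exhibiting a codimension-$1$ spun embedding of the right-hand side and comparing it with a naturally different embedding induced by the twist monodromy.

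First, using the hypothesized proper embedding $V \hookrightarrow \D^{n+2}$, together with the standard codimension-$1$ embedding of $\s^1 \times \s^{n-1}$ into $\s^{n+1} = \partial \D^{n+2}$ (as the boundary of a tubular neighbourhood of a standard $\s^1 \subset \s^{n+1}$), I would obtain a proper codimension-$2$ embedding $V' \hookrightarrow \D^{n+2}$. Spinning this embedding along an auxiliary $\s^1$, in the spirit of the constructions used in Theorem~\ref{thm0}, produces natural codimension-$1$ spun open book embeddings of both $\textrm{OB}(V', \id)$ and $\textrm{OB}(V', \sigma_{n-1})$ into $(n+2)$-dimensional ambient spaces.

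The key observation is that the trivial monodromy $\id$ yields an embedding into the product bundle $\s^1 \times \s^{n+1}$, while the sphere-twist monodromy $\sigma_{n-1}$---which is defined using the non-trivial loop in $\pi_1(SO(n))$---yields an embedding into the non-trivial $\s^{n+1}$-bundle $\s^1 \widetilde{\times} \s^{n+1}$ over $\s^1$. These two ambient bundles are distinguished by an elementary characteristic class invariant (for instance $w_2$, or the orientation class of the vertical tangent bundle along the spinning direction). A diffeomorphism $\textrm{OB}(V', \sigma_{n-1}) \cong \textrm{OB}(V', \id)$ would require the two normal-bundle data to agree, and this should be directly incompatible with the characteristic-class comparison, yielding the desired contradiction.

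The main obstacle I expect is step three: making rigorous the correspondence between the loop in $\pi_1(SO(n))$ defining $\sigma_{n-1}$ and the twisting of the ambient $\s^{n+1}$-bundle over the spinning $\s^1$. One has to thread this loop through the spun-embedding construction and verify that it lands in the structure group of the ambient bundle, as opposed to being absorbed into an isotopy of the embedded submanifold. Once this identification is done, the characteristic-class computation and the contradiction are essentially immediate; this is where ``open book techniques'' is doing the heavy lifting, converting the abstract isotopy question about $\sigma_{n-1}$ into a bundle-theoretic question about $(n+2)$-manifolds.
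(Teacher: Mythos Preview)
Your opening move is correct and matches the paper: assuming $\sigma_{n-1}\simeq\id$ forces $\ob(V',\sigma_{n-1})\cong\ob(V',\id)$, and one then seeks a contradiction. The gap is in how you extract that contradiction. You build codimension-$1$ spun embeddings of the two open books into two different $(n+2)$-dimensional targets---a trivial and a twisted $\s^{n+1}$-bundle over $\s^1$---and then assert that a diffeomorphism between the domains would force the targets, or the normal bundles, to agree. That inference is invalid: a single closed manifold can embed, with different normal data, into non-diffeomorphic ambient manifolds, so exhibiting one embedding into a twisted target says nothing about whether another embedding into the untwisted target exists. What you have produced is an invariant of the chosen \emph{embedding}, not of the manifold $\ob(V',\sigma_{n-1})$ itself. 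There are also secondary problems: the oriented bundle ``$\s^1\widetilde{\times}\s^{n+1}$'' is not canonically defined (oriented $\s^{n+1}$-bundles over $\s^1$ are classified by $\pi_0(\Diff^+(\s^{n+1}))$, which is often trivial), and it is unclear how the codimension-$2$ inclusion $V'\hookrightarrow\D^{n+2}$ is meant to yield a codimension-$1$ page embedding into an $(n+1)$-manifold.

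The paper's argument supplies exactly the missing intrinsic invariant. Writing $\s^1\times\s^{n-1}\setminus\mathrm{int}(\D^n)$ as a plumbing $(\s^1\times\D^{n-1})\S(\s^{n-1}\times[0,1])$ and applying the plumbing--to--connected-sum Lemma~\ref{pageplumb} together with Lemma~\ref{nospinlemma}, one computes
\[
\ob(V',\sigma_{n-1}) \;\cong\; \s^1\times\s^n \,\#\, \s^2\widetilde{\times}\s^{n-1} \,\#\, \partial(V\times\D^2),
\]
which is not spin because of the middle summand. On the other hand $\ob(V',\id)$ has $\s^2\times\s^{n-1}$ in place of $\s^2\widetilde{\times}\s^{n-1}$ and, using the hypothesis $V\hookrightarrow\D^{n+2}$ to handle the last summand, embeds in $\R^{n+3}$. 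The contradiction is now intrinsic (spin versus non-spin of the $(n+1)$-manifold), not a comparison of ambient targets. Your $w_2$ instinct is pointed in the right direction, but it has to be applied to $\ob(V',\sigma_{n-1})$ itself, and for that one needs the page decomposition and Lemma~\ref{pageplumb} that your outline bypasses.
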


    \noindent Here, $Diff^{+}_\partial(X)$ denotes the group of orientation preserving diffeomorphisms of $X$, relative to the boundary. There exists many examples of manifolds $V^n$ that admit proper embeddings in $\D^{n+2}$. For example, Cochran \cite{Coch} has given sufficient conditions for a closed $4$-manifold $X$ such that $X_0 = X \setminus int(\D^4)$ smoothly embeds in $\s^5$. Any such $X_0$ admits a proper embedding in $\D^6$. In particular, any simply simply connected $4$-manifold that embed in $\s^6$, also embeds in $\s^5$. Simple $4$-manifolds of the form $\s^1 \times M^3$, $\Sigma_{g_1} \times \Sigma_{g_2}$ also embed in $\s^5$. Also, the page of any open book decomposition on $\s^5$ gives example of such a $4$-manifold.    
    
    \noindent Let $M^n_k = \#_b^k (\s^{n-1} \times [0,1])$ and let $\sigma_i$ denote the sphere twist along the core sphere (which is separating) of the $i$th copy of $\s^{n-1} \times [0,1]$. Then $\sigma_i$ is not isotopic to identity in $\Diff_\partial(M^n_k)$. One can see this in the following way. We note that $\ob(M_k^n,\sigma_i) = \ob(\s^{n-1},\id) \# \cdots \# \ob(\s^{n-1} \times [0,1],\id) \# \ob(\s^{n-1} \times [0,1],\sigma_i) \#\ob(\s^{n-1},\id) \# \cdots \# \ob(\s^{n-1} \times [0,1],\id) = \#^{k-1}(\s^2\times\s^{n-1}) \# (\s^2 \widetilde{\times} \s^{n-1})$ (see Lemma \ref{nospinlemma}). Thus, $\ob(M_k^n,\sigma_i)$ is not spin. If $\sigma_i$ is trivial in $\Diff_\partial(M^n_k)$, then $\ob(M_k^n,\sigma_i) = \#^k (\s^2 \times \s^{n-1})$, which is spin, giving a contradiction.

    Unless stated otherwise, we always work in the smooth category.
    
    \subsection{Acknowledgement} The authors warmly thank Chun-Sheng Hsueh for discussing his work on $4$--dimensional open books and twist maps \cite{hsueh2}, which motivated this article. We also thank Professor Sukumar Das Adhikari and Professor Goutam Mukherjee for their support and encouragement during this work.

	\section{Preliminaries}	\label{sec2}

	\subsection{The sphere twist map} \label{spheretwist}
	
	We consider the action of $\so(n+1)$ on $\s^n$ ($n \geq 1$). It is well known that the inclusion map $\so(n+1)  \hookrightarrow \Diff^+(\s^n)$ induces an injective homomorphism $\pi_1(\so(n+1)) \hookrightarrow \pi_1(\Diff^+(\s^n))$. Let $\alpha$ be the generating element in $\pi_1(\so(n+1))$.  One can then define a diffeomorphism $ \sigma_n : \s^n \times [0, 1] \rightarrow \s^n \times [0, 1]$ by $(y, t) \mapsto (\alpha(t)\cdot y, t)$ for $(y, t) \in \s^n \times [0, 1]$. If $\s^n$ is smoothly embedded in an oriented $n+1$ manifold $M$, then $\sigma_n$ can be extended to a diffeomorphism $M$, called a \emph{twist map} along $\s^n$. 
	
	\noindent Note that $\sigma_1$ is the Dehn twist map along an annulus $\mathcal{A}$ and $\pi_0(\Diff^+(\mathcal{A}, \partial \mathcal{A}))$ is the infinite cyclic group generated by $\sigma_1$. For $n \geq 2$, $\sigma_n$ generates the group $\pi_0(\Diff^+_\partial(\s^n \times [0,1])) = \Z_2$ relative to boundary, where, $\Diff^+_\partial(M)$ denotes the group of orientation preserving diffeomorphisms of a manifold $M$ that restricts to identity near $\partial M$. 
	
	\begin{lemma}\label{nospinlemma}
		$\textrm{OB}(\s^n \times [0,1], \id) = \s^2 \times \s^n$ and $\textrm{OB}(\s^n \times [0,1], \sigma_n) = \s^2 \widetilde{\times} \s^n$.
	\end{lemma}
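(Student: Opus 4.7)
The plan is to present both open books as $\s^n$-bundles over $\s^2$ and identify each by its clutching function. Since $\sigma_n(y,t) = (\alpha(t)\cdot y, t)$ preserves the $t$-coordinate, both monodromies commute with the projection $\s^n \times [0,1] \to [0,1]$, so the mapping torus $\mathcal{MT}(\s^n \times [0,1], h)$ fibers over $[0,1] \times \s^1$ with $\s^n$ fiber. Because $\alpha(0) = \alpha(1) = \id$, this $\s^n$-fibration extends across the two binding caps $\s^n \times D^2$ at $t = 0$ and $t = 1$, and the base is completed to $\s^2 = D^2 \cup_{\s^1}([0,1] \times \s^1) \cup_{\s^1} D^2$. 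The whole open book thereby becomes an $\s^n$-bundle $E_h \to \s^2$ with structure group $\so(n+1)$. For $h = \id$, the mapping torus is the untwisted product $\s^n \times [0,1] \times \s^1$ and the cap attachments are $\s^n$-equivariant, so $E_{\id} \cong \s^2 \times \s^n$ at once, proving the first identity.

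For $h = \sigma_n$, I would split $\s^2$ into hemispheres $D_\pm$ along the equator $\{t = 1/2\} \times \s^1$ and compute the transition across this circle. The formula $\Psi_-(y,t,\theta) = [\alpha(t\theta)\cdot y, t, \theta]$ gives a trivialization $D_- \times \s^n \to E|_{D_-}$: it is well-defined modulo the mapping-torus relation $(y,t,0) \sim (\alpha(t)y,t,1)$ because $\Psi_-(y,t,0) = [y,t,0] = [\alpha(t)y,t,1] = \Psi_-(y,t,1)$, and it matches the trivial $t = 0$ cap since $\alpha(0) = \id$. A symmetric formula $\Psi_+(y,t,\theta) = [\alpha(1 - (1-t)\theta)\cdot y, t, \theta]$ trivializes $E|_{D_+}$ and matches the $t = 1$ cap since $\alpha(1) = \id$. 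Comparing $\Psi_-$ and $\Psi_+$ along the equator yields the clutching function
\[
\gamma(\theta) \;=\; \alpha(1 - \theta/2)^{-1}\,\alpha(\theta/2) \;\in\; \so(n+1).
\]
Taking the standard representative $\alpha(\theta) = R_\theta$ (rotation by $2\pi\theta$ in a fixed 2-plane), the relation $R_a R_b = R_{a+b}$ together with the $2\pi$-periodicity of rotations collapses this to $\gamma(\theta) = R_{\theta - 1} = R_\theta = \alpha(\theta)$. Hence $[\gamma]$ is the generator of $\pi_1(\so(n+1))$ and $E_{\sigma_n}$ is the nontrivial $\s^n$-bundle over $\s^2$, namely $\s^2 \widetilde{\times}\s^n$.

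The main obstacle is verifying that $[\gamma]$ is nontrivial rather than inadvertently an even multiple of the generator. The use of a one-parameter-subgroup representative converts the check into the short algebraic identity above, and because homotopic choices of $\alpha$ produce homotopic clutching functions, the conclusion depends only on the generating homotopy class of $\alpha$, not the particular parametrisation.
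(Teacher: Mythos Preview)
Your proof is correct and the clutching-function computation is sound: the trivializations $\Psi_\pm$ are well-defined and match the binding caps as claimed, and the resulting transition $\gamma(\theta) = \alpha(1-\theta/2)^{-1}\alpha(\theta/2) = R_{\theta-1}$ is indeed a single traversal of the generator of $\pi_1(\so(n+1))$.

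The paper's argument and yours share the same conceptual backbone --- both recognize $\ob(\s^n\times[0,1],h)$ as an $\s^n$-bundle over $\s^2$ --- but execute differently. The paper handles the identity case by a direct decomposition $\s^n\times[0,1]\times\s^1 \cup \s^n\times\{0,1\}\times\D^2 = \s^2\times\s^n$, and then treats the twisted case by a cut-and-reglue move: cut $\s^2\times\s^n$ along a page and reglue by $\sigma_n$, invoking (essentially as a definition) that this yields $\s^2\widetilde{\times}\s^n$. Your approach instead builds the bundle structure uniformly from the open-book data and computes the clutching function explicitly from scratch. The paper's route is shorter but leans on the reader accepting the cut-and-reglue description of $\s^2\widetilde{\times}\s^n$; yours is more self-contained and makes the identification with the nontrivial bundle transparent, at the cost of a few more lines of computation.
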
 
	
	\begin{proof}
		
	By definition, $\textrm{OB}(\s^n \times [0,1], \id) = \s^n \times [0,1] \times \s^1 \cup_{id, \partial} \s^n \times \{0,1\} \times \D^2 = \D^2 \times \s^n \times \{0\} \cup_{id, \partial} \D^2 \times \s^n \times \{1\} = \s^2 \times \s^n$. If we cut open $\s^2 \times \s^n$ along a page of its open book decomposition, we get $V^{n+2} = \s^n \times \D^2$. The complement of $V \subset \s^2 \times \s^n$ is also diffeomorphic to $\s^n \times \D^2$, where $\{p\} \times \D^2$ is identified with the suspension of $\{p\} \times [0,1]$ for every $p \in \s^n$. 
	
	\noindent  We note that $\s^2 \widetilde{\times } \s^n = \mathcal{MT}(\s^n \times [0,1], \sigma_n) \cup_{id, \partial} \s^n \times \{0,1\} \times \D^2$ is obtained from $\s^2 \times \s^n$ by cutting open along a page and gluing back a copy of $\s^n \times \D^2$ via the map $\sigma_n : (y, t) \mapsto (\alpha(t)\cdot y, t)$. Here, $\alpha(0) = \alpha(1) = \Id_{n+1}$. Thus, $\textrm{OB}(\s^n \times [0,1], \sigma_n) = \s^2 \widetilde{\times} \s^n$.
	
	\end{proof}

\subsection{The $2$d sphere twist} \label{2dtwist} Let $M^3$ be an oriented $3$-manifold (possibly with boundary) and let $S$ be an $2$-sphere embedded in $M$. Let $\sigma_S$ denote the sphere twist on $M$ along $S$. The isotopy class of $\sigma_S$ only depends on the isotopy class of $S$. In fact, Laudenbach \cite{laud1} \cite{laud2} showed that if $S$ and $\hat{S}$ are homotopic $2$-spheres in $M$ that are non-nullhomotopic, then $S$ and $\hat{S}S$ are isotopic. Thus, $\sigma_S$ only depends on the homotopy class of $S$.

\begin{figure}[htbp] 
	
	\centering
	\def\svgwidth{5cm}
	%% Creator: Inkscape 1.1.2 (0a00cf5339, 2022-02-04), www.inkscape.org
%% PDF/EPS/PS + LaTeX output extension by Johan Engelen, 2010
%% Accompanies image file '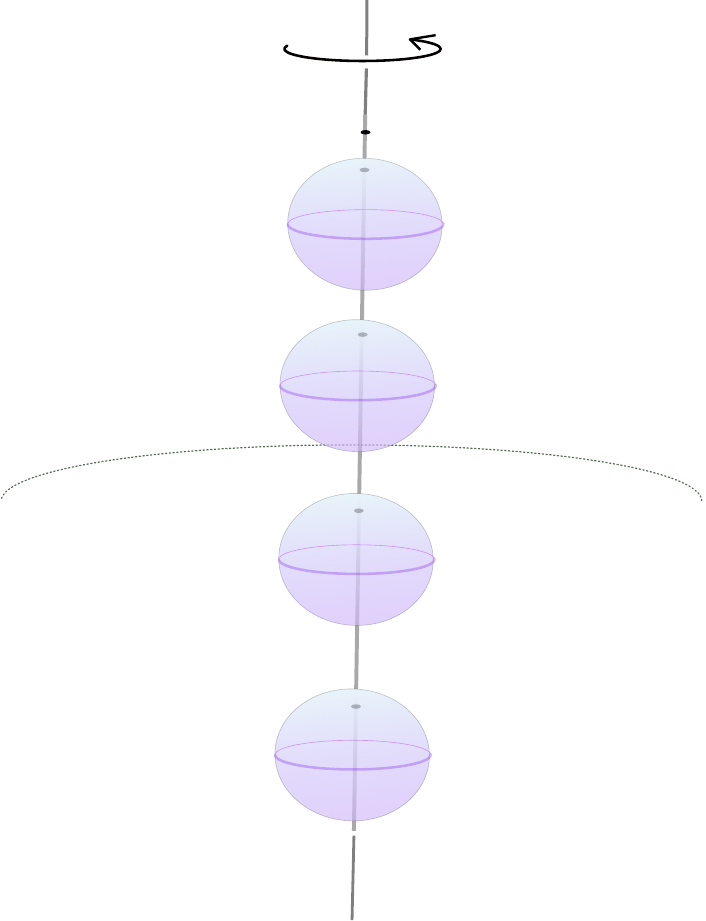' (pdf, eps, ps)
%%
%% To include the image in your LaTeX document, write
%%   \input{<filename>.pdf_tex}
%%  instead of
%%   \includegraphics{<filename>.pdf}
%% To scale the image, write
%%   \def\svgwidth{<desired width>}
%%   \input{<filename>.pdf_tex}
%%  instead of
%%   \includegraphics[width=<desired width>]{<filename>.pdf}
%%
%% Images with a different path to the parent latex file can
%% be accessed with the `import' package (which may need to be
%% installed) using
%%   \usepackage{import}
%% in the preamble, and then including the image with
%%   \import{<path to file>}{<filename>.pdf_tex}
%% Alternatively, one can specify
%%   \graphicspath{{<path to file>/}}
%% 
%% For more information, please see info/svg-inkscape on CTAN:
%%   http://tug.ctan.org/tex-archive/info/svg-inkscape
%%
\begingroup%
  \makeatletter%
  \providecommand\color[2][]{%
    \errmessage{(Inkscape) Color is used for the text in Inkscape, but the package 'color.sty' is not loaded}%
    \renewcommand\color[2][]{}%
  }%
  \providecommand\transparent[1]{%
    \errmessage{(Inkscape) Transparency is used (non-zero) for the text in Inkscape, but the package 'transparent.sty' is not loaded}%
    \renewcommand\transparent[1]{}%
  }%
  \providecommand\rotatebox[2]{#2}%
  \newcommand*\fsize{\dimexpr\f@size pt\relax}%
  \newcommand*\lineheight[1]{\fontsize{\fsize}{#1\fsize}\selectfont}%
  \ifx\svgwidth\undefined%
    \setlength{\unitlength}{338.70772102bp}%
    \ifx\svgscale\undefined%
      \relax%
    \else%
      \setlength{\unitlength}{\unitlength * \real{\svgscale}}%
    \fi%
  \else%
    \setlength{\unitlength}{\svgwidth}%
  \fi%
  \global\let\svgwidth\undefined%
  \global\let\svgscale\undefined%
  \makeatother%
  \begin{picture}(1,1.30408252)%
    \lineheight{1}%
    \setlength\tabcolsep{0pt}%
    \put(0,0){\includegraphics[width=\unitlength,page=1]{stwist.pdf}}%
    \put(0.49120221,0.35680266){\color[rgb]{0,0,0}\makebox(0,0)[lt]{\lineheight{1.25}\smash{\begin{tabular}[t]{l}$\vdots$\end{tabular}}}}%
    \put(0,0){\includegraphics[width=\unitlength,page=2]{stwist.pdf}}%
  \end{picture}%
\endgroup%

	\caption{}
	\label{figstwist}
	
\end{figure}

\noindent Let $\tw(M^3)$ denote the \emph{twist} subgroup of $\Diff_\partial(M)$ generated by sphere twists along embedded spheres (up to isotopy) in $M$. In general, it is not easy to show that $\sigma_S \in \tw(M)$ is non-trivial for some $S$. The reason being that $\sigma_S$ fixes the homotopy class of any loop and surface in $M$ (see page $2$ of \cite{bbp}). Moreover, for $f \in \Diff_\partial(M)$, $f \circ \sigma_S \circ f^{-1} = \sigma_{f(S)}$. Let $\hat{S}$ be another embedded $2$-sphere in $M$. Since $\sigma_S(\hat{S})$  and $\hat{S}$ belong to the same homotopy class, $\sigma_S \circ \sigma_{\hat{S}} = \sigma_{\hat{S}} \circ \sigma_S$. Thus, $\tw(M)$ is abelian. Let $D_n^3$ denote the boundary connected sum of $n$-copies of $\s^2 \times [0,1]$. The following was observed by Hatcher and Wahl (see section $2$ of \cite{hw}).

\begin{proposition}[\cite{hw}]\label{twistprop}
For any embedding of $D^3_n$ in a $3$-manifold $M$, the composition of the $n+1$ twists along the boundary spheres of $D^3_n$ is isotopic to the identity. 
	
\end{proposition}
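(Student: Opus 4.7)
The plan is to reduce the claim to a concrete model. Embed $D_n^3$ into $\s^3$ as the complement of $n+1$ disjoint closed round balls $B_0,\ldots,B_n$ whose centers lie on a common great circle of $\s^3$, and let $S_i = \partial B_i$. Choose the one-parameter family $R_\theta \in \so(4) \subset \Diff^+(\s^3)$ of rotations whose fixed set contains that great circle, so each $B_i$ is setwise preserved by every $R_\theta$ and $R_{2\pi} = \id$. Since each $B_i$ is rotation-invariant, we may pick small $R_\theta$-equivariant collars $\nu_i \cong S_i \times [0,\epsilon]$ of $S_i$ inside $D_n^3$ on which $R_\theta$ acts by the product diffeomorphism $(y,r) \mapsto (R_\theta(y),r)$.

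Next, I would modify the loop $\theta \mapsto R_\theta$ into a path of diffeomorphisms of $\s^3$ that pointwise fixes every $B_i$. Fix a smooth function $f:[0,\epsilon] \to [0,1]$ with $f \equiv 0$ near $0$ and $f \equiv 1$ near $\epsilon$, and set
\[
\gamma_\theta(p) = \begin{cases} p, & p \in \textstyle\sqcup_i B_i, \\ (R_{\theta f(r)}(y),\, r), & p = (y,r) \in \nu_i, \\ R_\theta(p), & \text{otherwise}. \end{cases}
\]
This is smooth in $(p,\theta)$, each $\gamma_\theta$ is a diffeomorphism of $\s^3$, $\gamma_0 = \id$, and by construction $\gamma_\theta$ is the identity on every $B_i$ as well as on a collar of $S_i$ in $D_n^3$. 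At the terminal time, outside the $\nu_i$ we have $\gamma_{2\pi} = R_{2\pi} = \id$, and on each $\nu_i$ the formula $(y,r) \mapsto (R_{2\pi f(r)}(y), r)$ is, after reparameterization in $r$, exactly the sphere twist $\sigma_{S_i}$: the loop $r \mapsto R_{2\pi f(r)}$ in $\so(3)$ is a full rotation about a diameter of $S_i$ and so represents the generator of $\pi_1(\so(3)) = \Z_2$ defining $\sigma_{S_i}$. Since the $n+1$ modifications have disjoint supports and therefore commute, $\gamma_{2\pi} = \sigma_{S_0} \circ \cdots \circ \sigma_{S_n}$.

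Finally I would transfer back. Any embedding $D_n^3 \hookrightarrow M$ extends to an embedding of a regular neighborhood $N \cong D_n^3$, and $N$ is abstractly diffeomorphic to the model complement in $\s^3$ above by a diffeomorphism sending boundary spheres to boundary spheres. Transporting $\{\gamma_\theta|_{D_n^3}\}$ across this diffeomorphism and extending by the identity outside $N$ produces the desired isotopy from $\sigma_{S_0} \circ \cdots \circ \sigma_{S_n}$ to $\id$ in $\Diff(M)$. The main technical point needing care is the piecewise definition of $\gamma_\theta$: one has to choose the $R_\theta$-invariant collars $\nu_i$ and the cut-off $f$ so that the three pieces glue smoothly on overlaps and each $\gamma_\theta$ is a genuine diffeomorphism. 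With that set up, identifying $\gamma_{2\pi}$ with the standard product of sphere twists amounts to comparing two parametrizations of the generator of $\pi_1(\so(3))$.
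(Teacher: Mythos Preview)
Your argument is correct and is essentially the same as the paper's: both build the isotopy by rotating the model about an axis and damping the rotation to the identity in collars of the boundary spheres, so that at the terminal time one sees exactly the product of the $n+1$ sphere twists. The only cosmetic difference is that you take the model to be $\s^3$ minus $n+1$ round balls aligned on a fixed great circle (using an $\so(4)$ rotation), whereas the paper works directly in $\D^3$ with $n$ sub-balls aligned on a diameter; after stereographic projection these are the same picture.
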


One can construct an isotopy from the identity map of $D^3_n$ to the composition of twists along the boundary spheres in the following way. One can also think of $D_n^3$ as a $3$-ball with $n$ disjoint sub-balls removed from its interior.  Align the sub-balls along an axis of a standard $\D^3$ as in Figure \ref{figstwist} and then rotate each point $x$ in the region between the inner and outer boundary spheres of $D^3_n$ by an angle $t\theta(x)$ about the axis.  Here $t \in [0,1]$ is the isotopy parameter and $\theta(x)$ goes from $0$ to $2\pi$ as $x$ varies across an $\epsilon$-neighborhood of the boundary spheres, with $\theta(x) = 0$ on the spheres and $\theta(x) = 2\pi$ outside the neighborhood.  Thus, if $S_1,\dots , S_{n+1}$ denotes boundary spheres of $D^3_n$, then $\sigma_{S_1} \circ \sigma_{S_2} \circ \cdots \circ \sigma_{S_{n+1}} \simeq \id$.  Let $S_{n+1}$ denote the outermost boundary sphere.

\subsection{The push map} Let us consider a planar surface with $3$ boundary components $\Sigma_{0,3}$. Let $a$ and $c$ denote the inner components and let $b$ denote the outer component (see Figure \ref{figS03}). If we \emph{push} the component $c$ around $a$ and come back to the initial position we get a diffeomorphism of $\Sigma_{0,3}$, relative to boundary, called the \emph{push map} along $a$. In terms of Dehn twists, the push map is isotopic to $\tau_a \tau^{-1}_b$. The \emph{push map} can also be defined for the manifold $\s^n \times [0,1] \#_b \s^1 \times \D^n$. In this case, we need to push the boundary component diffeomorphic to $\s^n$, along the curve $\gamma = \s^1 \times \{p\}$ for some point $p \in int \D^n$. Here, by \emph{pushing} along $\gamma$, we are essentially defining an element of $\pi_1 \textrm{Emb}(\s^n, \s^n \times [0,1] \#_b \s^1 \times \D^n)$. This loop of embeddings then defines an element $\mathcal{P}_{n+1} \in \Diff^+_\partial(\s^n \times [0,1] \#_b \s^1 \times \D^n)$. The push map is a special case of a more general class of diffeomorphisms on the manifold $\mathcal{B}^n_{i,j} = \s^i \times \D^{n-i} \#_b \s^j \times \D^{n-j}$, called the \emph{barbell map} and it was introduced by Budney and Gabai \cite{BG}.

\noindent In fact, the push map can be defined for an arbitrary simple closed curve along which a sphere is pushed around. Let $M^3$ be an oriented 3-manifold with a sphere boundary component. A \emph{push map} along a simple closed curve $\alpha \in int(M)$ is an orientation preserving diffeomorphism of $M$, relative to boundary, which is obtained by pushing the sphere boundary component along the curve $\alpha$ and bringing it back to its initial position, while keeping the other boundary components pointwise fixed. For example, let $M$ be the manifold $\mathcal{B}_{1,1} = \s^2 \times [0,1] \#_b \s^1 \times \D^2$ and  $\alpha$ be a simple closed curve in $\mathcal{B}_{1,1}$, as in Figure \ref{figaro1p}. Pushing the sphere boundary component along $\alpha$, defines an element of $\pi_1 \textrm{Emb}(\s^2, \s^2 \times [0,1] \#_b \s^1 \times \D^2)$. This loop of embeddings defines an element $\rho_\alpha \in \Diff^+_\partial(\s^2 \times [0,1] \#_b \s^1 \times \D^2)$. In general, given a sphere boundary component $S$ of $M$, we denote the push map that pushes $S$ along $\alpha$, by $\rho_{S,\alpha}$. If $\alpha_0$ is a simple closed curve isotopic to $\s^1 \times \{p\} \subset \s^1 \times \D^2 \subset \mathcal{B}_{1,1}$, then $\ob(\mathcal{B}_{1,1}, \rho_{\alpha_0}) = \s^4$ (see \cite{hsueh1} for a proof). 

\begin{figure}[htbp] 
	
	\centering
	\def\svgwidth{16cm}
	%% Creator: Inkscape 1.1.2 (0a00cf5339, 2022-02-04), www.inkscape.org
%% PDF/EPS/PS + LaTeX output extension by Johan Engelen, 2010
%% Accompanies image file '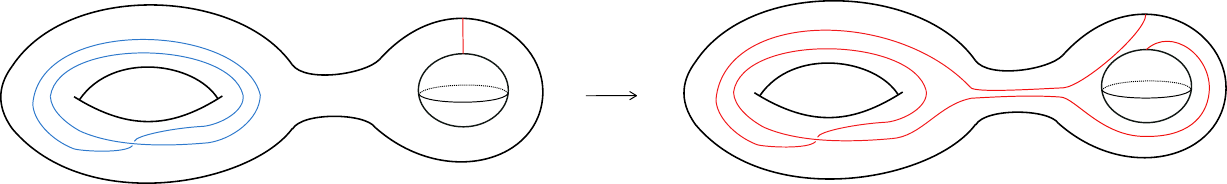' (pdf, eps, ps)
%%
%% To include the image in your LaTeX document, write
%%   \input{<filename>.pdf_tex}
%%  instead of
%%   \includegraphics{<filename>.pdf}
%% To scale the image, write
%%   \def\svgwidth{<desired width>}
%%   \input{<filename>.pdf_tex}
%%  instead of
%%   \includegraphics[width=<desired width>]{<filename>.pdf}
%%
%% Images with a different path to the parent latex file can
%% be accessed with the `import' package (which may need to be
%% installed) using
%%   \usepackage{import}
%% in the preamble, and then including the image with
%%   \import{<path to file>}{<filename>.pdf_tex}
%% Alternatively, one can specify
%%   \graphicspath{{<path to file>/}}
%% 
%% For more information, please see info/svg-inkscape on CTAN:
%%   http://tug.ctan.org/tex-archive/info/svg-inkscape
%%
\begingroup%
  \makeatletter%
  \providecommand\color[2][]{%
    \errmessage{(Inkscape) Color is used for the text in Inkscape, but the package 'color.sty' is not loaded}%
    \renewcommand\color[2][]{}%
  }%
  \providecommand\transparent[1]{%
    \errmessage{(Inkscape) Transparency is used (non-zero) for the text in Inkscape, but the package 'transparent.sty' is not loaded}%
    \renewcommand\transparent[1]{}%
  }%
  \providecommand\rotatebox[2]{#2}%
  \newcommand*\fsize{\dimexpr\f@size pt\relax}%
  \newcommand*\lineheight[1]{\fontsize{\fsize}{#1\fsize}\selectfont}%
  \ifx\svgwidth\undefined%
    \setlength{\unitlength}{588.88797161bp}%
    \ifx\svgscale\undefined%
      \relax%
    \else%
      \setlength{\unitlength}{\unitlength * \real{\svgscale}}%
    \fi%
  \else%
    \setlength{\unitlength}{\svgwidth}%
  \fi%
  \global\let\svgwidth\undefined%
  \global\let\svgscale\undefined%
  \makeatother%
  \begin{picture}(1,0.14994019)%
    \lineheight{1}%
    \setlength\tabcolsep{0pt}%
    \put(0,0){\includegraphics[width=\unitlength,page=1]{figaro1.pdf}}%
    \put(0.14454455,0.02382568){\color[rgb]{0,0,0}\makebox(0,0)[lt]{\lineheight{1.25}\smash{\begin{tabular}[t]{l}$\alpha$\end{tabular}}}}%
    \put(0.48184591,0.08508242){\color[rgb]{0,0,0}\makebox(0,0)[lt]{\lineheight{1.25}\smash{\begin{tabular}[t]{l}$\rho_\alpha$\end{tabular}}}}%
  \end{picture}%
\endgroup%

	\caption{The closed curve obtained as the union of an arc joining the two boundary components of $\mathcal{B}_{1,1}$ and its image under $\rho_\alpha$, is homotopic to $\alpha$.}
	\label{figaro1p}
	
\end{figure}

\noindent We note that $\rho_{S,\alpha}$ corresponds to a loop in $\textrm{Emb}(\s^2, M)$. This gives a proper embedding $f_{S,\alpha} : \s^1 \times \D^2 \setminus \mathcal{S}(\{pt.\} \times \D^2) \rightarrow M$, where $\mathcal{S}(\{pt.\} \times \D^2)$ denotes the suspension of the disk $\{pt.\} \times \D^2$. We note that $\rho_{S,\alpha}$ is supported in a neighborhood of the image of $f_{S,\alpha}$.

\subsection{Open book decomposition} An open book decomposition of a closed $n$-manifold $M$ consists of a codimension $2$ closed submanifold $B$ and a fibration map $\pi : M \setminus B \rightarrow \s^1$, such that in a tubular neighborhood of $B \subset M$, the restriction map $\pi : B \times (\D^2 \setminus \{0\}) \rightarrow \s^1$ is given by $(b,r,\theta) \mapsto \theta$. The fibration $\pi$ determines a unique fiber manifold $N^{n-1}$ whose boundary is $B$. The closure $\bar{N}$ is called the \emph{page} and $B$ is called the \emph{binding}. The monodromy of the fibration map determines a diffeomorphism $\phi$ of $\bar{N}$ such that $\phi$ is identity near a collar neighborhood of boundary $\partial \bar{N}$. In particular, $M = \mathcal{MT}(\bar{N}, \phi) \cup _{id, \partial} \partial \bar{N} \times \D^2$, where $\mathcal{MT}(\bar{N}, \phi)$ denotes the mapping torus of $\phi$. We denote such an open book decomposition of $M$ by $\textrm{OB}(\bar{N},\phi)$. The map $\phi$ is called the \emph{monodromy} of the open book. It is a well-known fact that every odd dimensional closed manifold admits an open book decomposition \cite{Al} \cite{La}. The existence question of open book decomposition on a closed $4$-manifold is not resolved yet.

	\begin{exmp} 
		The \emph{trivial} open book on $\s^n$ is given by $\textrm{OB}(\D^{n-1}, id)$. 
	\end{exmp} 
	
	\begin{exmp} 
	  Consider the manifold $\mathcal{W}_n = \s^{n-1} \times [0,1] \#_b \s^1 \times \D^{n-1}$ ($n \geq 2$) obtained by taking boundary connected sum of $\s^{n-1} \times [0,1]$ and $\s^1 \times \D^{n-1}$. Recall the push map $\mathcal{P}_n$ defined on $\mathcal{W}_n$. The following is well-known (see \cite{hsueh1} and \cite{hsueh2}).
	  
	  \begin{lemma}
	  	$\ob(\mathcal{W}_n, \mathcal{P}_n) = \s^{n+1}$.
	  \end{lemma}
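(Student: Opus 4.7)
The plan is to verify $\ob(\mathcal{W}_n, \mathcal{P}_n) \cong \s^{n+1}$ by realizing the open book as an iterated positive stabilization of the trivial open book on $\s^{n+1}$ and then identifying the resulting monodromy with the push map.

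Starting from the standard decomposition $\s^{n+1} = \bd(\D^n \times \D^2) = (\D^n \times \s^1) \cup_\partial (\s^{n-1} \times \D^2)$, which realizes the trivial open book $\ob(\D^n, \id)$, I would successively enlarge the page by boundary-connected sum: first with $\s^1 \times \D^{n-1}$, and then with $\s^{n-1} \times [0,1]$. Each of these operations can be arranged as an open book stabilization — attaching the piece to the page and composing the monodromy with a generalized Dehn twist supported near the attached handle — and such stabilizations preserve the ambient manifold $\s^{n+1}$. After the two stabilizations the page is $\mathcal{W}_n$, while the monodromy is a composition of two twists supported in neighborhoods of the two attached handles.

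The key step is to identify this composite monodromy with the push map $\mathcal{P}_n$. Both are represented by loops in $\mathrm{Emb}(\s^{n-1}, \mathcal{W}_n)$: the map $\mathcal{P}_n$ is by definition the loop that pushes the $\s^{n-1}$-boundary of the $\s^{n-1} \times [0,1]$ factor once around the $\s^1$-core of the $\s^1 \times \D^{n-1}$ factor, while the composite of the two stabilization twists realizes this same motion as a concatenation, with one loop moving the sphere across the $\s^1 \times \D^{n-1}$ piece and the other returning it through the $\s^{n-1} \times [0,1]$ piece. The concatenation of these two loops in $\mathrm{Emb}(\s^{n-1}, \mathcal{W}_n)$ is homotopic to the loop defining $\mathcal{P}_n$, and so the two diffeomorphisms coincide in $\pi_0(\Diff^+_\partial(\mathcal{W}_n))$.

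The main obstacle is the careful bookkeeping required in this last identification, particularly matching the orientations and framings of the sphere twists produced by the stabilizations with those implicit in the push construction. In dimension $n = 2$, a direct check is available: the page is $\Sigma_{0,3}$, the push map is isotopic (relative to boundary) to $\tau_a \tau_b^{-1}$ by the paper's description of the push map, and the planar open book $\ob(\Sigma_{0,3}, \tau_a \tau_b^{-1})$ corresponds via the Lickorish--Wallace surgery calculus to $\pm 1$-framed surgery on a two-component unlink in $\s^3$, which gives back $\s^3$. An analogous sphere-twist surgery interpretation in higher dimensions, replacing Dehn twists by twists along embedded $\s^{n-1}$'s, then supplies the general identification.
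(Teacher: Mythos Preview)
The paper does not give its own proof of this lemma; it records the statement as known and cites \cite{hsueh1}, \cite{hsueh2}. So the only question is whether your outline stands on its own, and it has a genuine gap.

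The problem is the claim that two twists, each ``supported near the attached handle,'' compose to give $\mathcal{P}_n$. If $\tau_1$ is supported in the $\s^1\times\D^{n-1}$ summand and $\tau_2$ in the $\s^{n-1}\times[0,1]$ summand, then $\tau_1$ fixes the entire $\s^{n-1}\times[0,1]$ piece pointwise (the two summands meet only in a ball), so $\tau_1$ cannot ``move the sphere across the $\s^1\times\D^{n-1}$ piece'' as you describe; the loop in $\mathrm{Emb}(\s^{n-1},\mathcal{W}_n)$ it induces is constant. More decisively, the paper's own Lemma~\ref{pageplumb} gives
\[
\ob(\mathcal{W}_n,\tau_1\circ\tau_2)=\ob(\s^1\times\D^{n-1},\tau_1)\ \#\ \ob(\s^{n-1}\times[0,1],\tau_2),
\]
and by Lemma~\ref{nospinlemma} the second summand is $\s^2\times\s^{n-1}$ or $\s^2\,\widetilde{\times}\,\s^{n-1}$, neither of which is a sphere for $n\geq 3$. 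So your second ``stabilization'' cannot preserve $\s^{n+1}$.

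Even your test case $n=2$ exhibits the failure of the identification step. Two successive annular stabilizations of $\ob(\D^2,\id)$ yield the monodromy $\tau_a\tau_c^{-1}$, where $a$ and $c$ are the cores of the two annular summands --- both parallel to \emph{inner} boundary components of $\Sigma_{0,3}$. The push map, by contrast, is $\tau_a\tau_b^{-1}$ with $b$ the \emph{outer} boundary, and these are distinct in $\pi_0(\Diff^+_\partial(\Sigma_{0,3}))\cong\Z^3$. Both monodromies happen to produce $\s^3$, but that coincidence does not prove the lemma, which is a statement about $\mathcal{P}_n$ specifically. The underlying point is that the push map genuinely drags a sphere from one summand through the other and so cannot be written as a product of diffeomorphisms each supported in a single boundary--connect--summand; any argument built on such a factorization is bound to miss it.
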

	  
	\end{exmp} 
	
\subsection{Fundamental group of 4D open books} A well-known fact in topology is that every finitely presented group can be realized as the fundamental group of an orientable closed $4$-manifold (Theorem $1.2.33$ in \cite{gs}). The push maps can be used to prove the following interesting fact.

\begin{theorem} \label{thmcontrv} \label{thmfund4}
	
	Given a finitely presented group $\mathcal{G}$, there exists an orientable $4$-dimensional open book $V^4 = \ob(M,\phi)$ such that $\pi_1(V) = \mathcal{G}$.   
	
\end{theorem}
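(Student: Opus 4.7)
The strategy is to realize any presentation $\mathcal{G} = \la g_1,\dots,g_n \mid r_1,\dots,r_k\ra$ by an open book whose page supplies the free group $F_n$ on the generators and whose monodromy is a composition of $k$ sphere pushes, one per relator, chosen so that each push forces the relation $r_i = 1$ in $\pi_1$ of the open book. Concretely, let $H_n$ denote the genus--$n$ handlebody and set $M^3 := H_n \#_b \bigl( \#_b^k (\s^2 \times [0,1]) \bigr)$, so that $\pi_1(M) = F_n$ and $\partial M = \Sigma_n \sqcup S_1 \sqcup \cdots \sqcup S_k$, with $\Sigma_n = \partial H_n$ and each $S_i$ the free sphere end of the $i$-th copy of $\s^2 \times [0,1]$. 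For each $i$, pick an embedded simple closed curve $\gamma_i \subset M$ whose class in $\pi_1(M)$ is $r_i$; by general position in a $3$-manifold the $\gamma_i$ may be taken pairwise disjoint and positioned so that each push $\rho_{S_i, \gamma_i}$ is well-defined. Set $\phi := \rho_{S_1, \gamma_1} \circ \cdots \circ \rho_{S_k, \gamma_k}$ and $V := \ob(M, \phi)$.

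For the $\pi_1$ computation, each push $\rho_{S_i,\gamma_i}$ is isotopic to the identity as an unbased diffeomorphism (the push-around-and-return isotopy, not rel $\partial M$, connects $\id$ to $\rho_{S_i, \gamma_i}$), so for a basepoint in $\partial M$ away from the supports one has $\phi_\ast = \id$ on $\pi_1(M)$, giving $\pi_1(\mathcal{MT}(M,\phi)) = F_n \times \la t\ra$ with $t$ the mapping torus generator. Van Kampen applied to $V = \mathcal{MT}(M,\phi) \cup_{\partial}(\partial M \times \D^2)$ yields $\pi_1(V) = \pi_1(\mathcal{MT}(M,\phi)) / \la\la m_j\ra\ra$, where $m_j$ is the meridian at boundary component $B_j$. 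At $\Sigma_n$ no push has acted, so $m_{\Sigma_n} = t$ and we obtain $t = 1$; at each sphere $S_i$ the push $\rho_{S_i, \gamma_i}$ twists the meridian to $m_{S_i} = [\gamma_i] \cdot t = r_i t$, imposing $r_i t = 1$. Combined, $t = 1$ and $r_i = 1$ for every $i$, whence
\[
\pi_1(V) \;=\; F_n / \la\la r_1, \dots, r_k\ra\ra \;=\; \mathcal{G}.
\]
Orientability of $V$ is immediate since both page and monodromy are orientation-preserving.

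The main obstacle is the twisted-meridian identity $m_{S_i} = r_i t$, which I plan to establish by a local model argument. A regular neighborhood $U_i \subset M$ of $S_i \cup \gamma_i$ is diffeomorphic to $\mathcal{W}_3 = \s^2 \times [0,1] \#_b \s^1 \times \D^2$, with $\gamma_i$ corresponding to the $\s^1$-core and $S_i$ to the free sphere end, and $\rho_{S_i, \gamma_i}|_{U_i}$ agreeing with the standard push $\mathcal{P}_3$. The lemma $\ob(\mathcal{W}_3, \mathcal{P}_3) = \s^4$ recorded in the excerpt, together with $\pi_1(\s^4) = 1$ and $\pi_1(\mathcal{MT}(\mathcal{W}_3, \mathcal{P}_3)) = \mathbb{Z}^2 = \la \alpha, t\ra$, forces the sphere meridian in the local model to equal $\alpha^{\pm 1} t$: no other element could, together with the untwisted torus meridian $t$, normally generate the full $\mathbb{Z}^2$. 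Transferring this identification along $U_i \hookrightarrow M$ sends the local $\alpha$ to $[\gamma_i] = r_i$, giving $m_{S_i} = r_i t$ globally, and the pairwise disjoint supports of the $k$ pushes ensure the local computations assemble consistently across all sphere boundaries, completing the argument.
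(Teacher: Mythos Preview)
Your construction---page $M = H_n \#_b \bigl(\#_b^k\,\s^2\times[0,1]\bigr)$ with monodromy the composition of pushes $\rho_{S_i,\gamma_i}$---is exactly the paper's, and your organization of the $\pi_1$ computation via Van~Kampen and meridian relations matches the paper's presentation $\pi_1(V)=\langle a_j\mid a_j^{-1}\phi_*(a_j),\ \bar\theta_i\phi_*(\theta_i)\rangle$.  The one substantive difference is how the sphere-meridian relation is identified.  The paper reads it off directly from the definition of the push: an arc $\theta_i$ from $\Sigma_n$ to $S_i$ is dragged around $\gamma_i$, so $\bar\theta_i\cdot\phi(\theta_i)\simeq\gamma_i=r_i$ (this is precisely what the figure caption for $\rho_\alpha$ records).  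Your route through the local model $\ob(\mathcal W_3,\mathcal P_3)=\s^4$ is correct but circuitous---it invokes a nontrivial identification of a $4$-manifold to recover a fact that follows in one line from how a push acts on arcs.  Note also that your local argument, as written, only forces $m_S=\alpha^{\pm1}t^k$ (you need the winding-number homomorphism $\pi_1(\mathcal{MT})\to\Z$ to pin down $|k|=1$); this does not affect the conclusion since $t=1$ is imposed by the $\Sigma_n$-meridian anyway, but the paper's direct computation avoids the issue entirely.
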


\noindent Theorem \ref{thmcontrv} is known essentially due to Kegel-- Schmaschke \cite{KegSc}. However, since the authors could not find a proof in the literature, we give here a proof of Theorem \ref{thmcontrv}.

\begin{proof}[Proof of Theorem \ref{thmfund4}]
	
	Let $\mathcal{G} = \langle x_1,x_2,\dots,x_g | r_1,r_2,\dots,r_k \rangle$. We define $M_0 = \#^g_b(\s^1 \times \D^2) \#^k_b (\s^2 \times [0,1])$. Note that $\partial M_0 = \Sigma_g \sqcup (\sqcup^k \s^2)$. Let $S_1, S_2, \dots, S_k$ be the sphere boundary components. Let $a_1, a_2, \dots, a_g$ be the generating curves of $\pi_1(M_0)$ and $\theta_j$ be an arc joining $\Sigma_g$ to $S_j$, as shown in Figure \ref{figpagep}. 
	
	\begin{figure}[htbp] 
		
		\centering
		\def\svgwidth{10cm}
		%% Creator: Inkscape 1.1.2 (0a00cf5339, 2022-02-04), www.inkscape.org
%% PDF/EPS/PS + LaTeX output extension by Johan Engelen, 2010
%% Accompanies image file '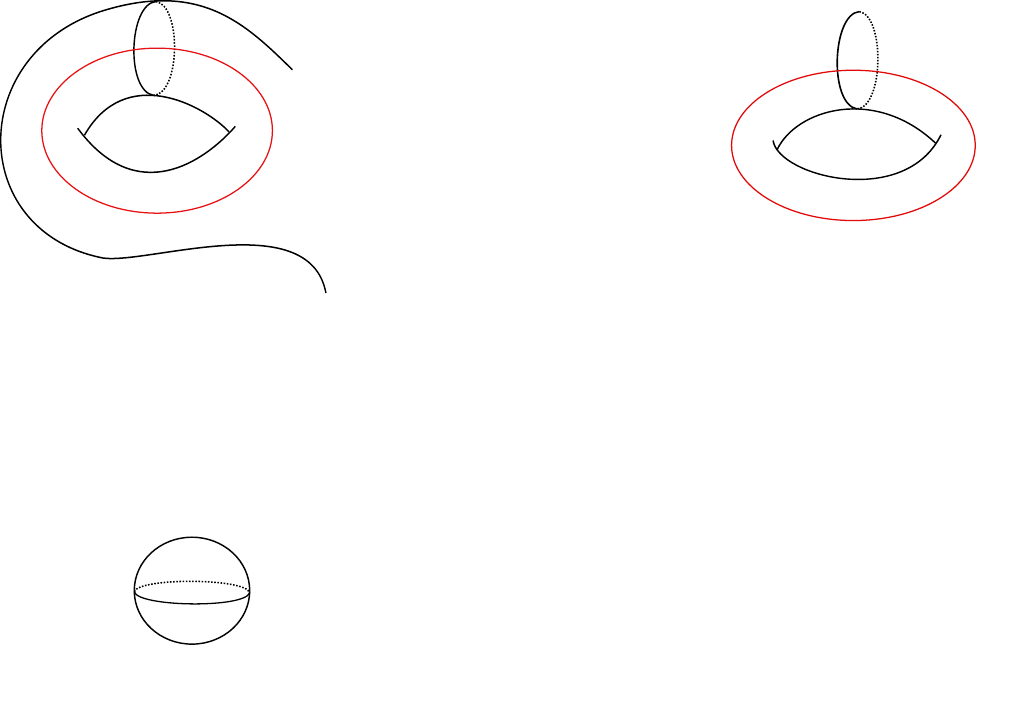' (pdf, eps, ps)
%%
%% To include the image in your LaTeX document, write
%%   \input{<filename>.pdf_tex}
%%  instead of
%%   \includegraphics{<filename>.pdf}
%% To scale the image, write
%%   \def\svgwidth{<desired width>}
%%   \input{<filename>.pdf_tex}
%%  instead of
%%   \includegraphics[width=<desired width>]{<filename>.pdf}
%%
%% Images with a different path to the parent latex file can
%% be accessed with the `import' package (which may need to be
%% installed) using
%%   \usepackage{import}
%% in the preamble, and then including the image with
%%   \import{<path to file>}{<filename>.pdf_tex}
%% Alternatively, one can specify
%%   \graphicspath{{<path to file>/}}
%% 
%% For more information, please see info/svg-inkscape on CTAN:
%%   http://tug.ctan.org/tex-archive/info/svg-inkscape
%%
\begingroup%
  \makeatletter%
  \providecommand\color[2][]{%
    \errmessage{(Inkscape) Color is used for the text in Inkscape, but the package 'color.sty' is not loaded}%
    \renewcommand\color[2][]{}%
  }%
  \providecommand\transparent[1]{%
    \errmessage{(Inkscape) Transparency is used (non-zero) for the text in Inkscape, but the package 'transparent.sty' is not loaded}%
    \renewcommand\transparent[1]{}%
  }%
  \providecommand\rotatebox[2]{#2}%
  \newcommand*\fsize{\dimexpr\f@size pt\relax}%
  \newcommand*\lineheight[1]{\fontsize{\fsize}{#1\fsize}\selectfont}%
  \ifx\svgwidth\undefined%
    \setlength{\unitlength}{493.25961802bp}%
    \ifx\svgscale\undefined%
      \relax%
    \else%
      \setlength{\unitlength}{\unitlength * \real{\svgscale}}%
    \fi%
  \else%
    \setlength{\unitlength}{\svgwidth}%
  \fi%
  \global\let\svgwidth\undefined%
  \global\let\svgscale\undefined%
  \makeatother%
  \begin{picture}(1,0.68217088)%
    \lineheight{1}%
    \setlength\tabcolsep{0pt}%
    \put(0,0){\includegraphics[width=\unitlength,page=1]{figpage.pdf}}%
    \put(0.15271808,0.46453075){\color[rgb]{0,0,0}\makebox(0,0)[lt]{\lineheight{1.25}\smash{\begin{tabular}[t]{l}$a_1$\end{tabular}}}}%
    \put(0.43807274,0.47145321){\color[rgb]{0,0,0}\makebox(0,0)[lt]{\lineheight{1.25}\smash{\begin{tabular}[t]{l}$a_2$\end{tabular}}}}%
    \put(0.82696009,0.45445513){\color[rgb]{0,0,0}\makebox(0,0)[lt]{\lineheight{1.25}\smash{\begin{tabular}[t]{l}$a_g$\end{tabular}}}}%
    \put(0,0){\includegraphics[width=\unitlength,page=2]{figpage.pdf}}%
    \put(0.19074705,0.02706287){\color[rgb]{0,0,0}\makebox(0,0)[lt]{\lineheight{1.25}\smash{\begin{tabular}[t]{l}$\theta_1$\end{tabular}}}}%
    \put(0.45425224,0.02007721){\color[rgb]{0,0,0}\makebox(0,0)[lt]{\lineheight{1.25}\smash{\begin{tabular}[t]{l}$\theta_2$\end{tabular}}}}%
    \put(0.7416365,0.02083539){\color[rgb]{0,0,0}\makebox(0,0)[lt]{\lineheight{1.25}\smash{\begin{tabular}[t]{l}$\theta_k$\end{tabular}}}}%
    \put(0,0){\includegraphics[width=\unitlength,page=3]{figpage.pdf}}%
  \end{picture}%
\endgroup%

		\caption{$M_0$}
		\label{figpagep}
		
	\end{figure}
	
	\noindent	Let $r_j = a^{\delta_1}_{i_1} a^{\delta_2}_{i_2}\cdots a^{\delta_{l(j)}}_{i_{l(j)}}$ for $1 \leq j \leq k$, and let $\gamma_j$ be a simple closed curve in the interior of $M_0$ homotopic to $r_j$. We define an element $\phi \in \Diff^+_\partial(M_0)$ by $\phi = \rho_{S_1,\gamma_1}\circ \rho_{S_2,\gamma_2}\circ \cdots \circ \rho_{S_k,\gamma_k}$. Define $V = \ob(M_0,\phi)$. We claim that $\pi_1(V) = \mathcal{G}$. To see this, we first observe that $\pi_1(V) = \langle a_1, a_2, \dots , a_g | a^{-1}_1\phi_*(a_1), a^{-1}_2\phi_*(a_2) , \dots ,a^{-1}_g\phi_*(a_g) , \bar{\theta_1} \phi_*(\theta_1), \bar{\theta_2}\phi_*(\theta_2)\cdots \bar{\theta_k}\phi_*(\theta_k) \rangle$. Note that the union of all such $x_i$s form a wedge of $g$ circles with a wedge point $p$. A regular neighborhood $\mathcal{N}_g$ of this wedge is homeomorphic to a handlebody of genus $g$. Since a simple closed curve $\alpha$ can be isotoped to be disjoint from $\mathcal{N}_g$, the push map $\rho_{S,\alpha}$ is supported away from $\mathcal{N}_g$. Hence, $\rho_{S,\alpha}$ restricts to the identity map on $\mathcal{N}_m$. Thus, $a^{-1}_j\phi_*(a_j)$ is the identity element for $1 \leq j \leq g$. Finally, note that $\bar{\theta_i} \phi_*(\theta_i) = \gamma_i = r_i$ in $\pi_1(V)$ for $1 \leq i \leq k$.
\end{proof}

\noindent Recently, Kastenholz \cite{tk} has proved that every $4$-manifold admitting an open book decomposition has simplicial volume zero. Thus, Theorem \ref{thm1} implies the following.

\begin{corollary} \label{cor01}
	Every finitely presented group is the fundamental group of a closed oriented $4$-manifold having simplicial volume zero. 
\end{corollary}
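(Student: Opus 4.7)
The plan is to derive the corollary as an immediate consequence of Theorem \ref{thmfund4} (= Theorem \ref{thmcontrv}) combined with the cited theorem of Kastenholz \cite{tk}. The corollary is not an independent result but rather the packaging of these two facts, so the proof I have in mind is genuinely a one-line deduction; the substantive content already lives in the earlier theorem.

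In detail, I would proceed as follows. Given a finitely presented group $\mathcal{G} = \langle x_1,\dots,x_g \mid r_1,\dots,r_k\rangle$, invoke Theorem \ref{thmfund4} to produce a closed orientable $4$-manifold $V^4 = \ob(M_0,\phi)$ with $\pi_1(V) \cong \mathcal{G}$, where $M_0 = \#_b^g(\s^1 \times \D^2)\#_b^k(\s^2\times[0,1])$ and $\phi$ is the composition of push maps $\rho_{S_j,\gamma_j}$ constructed in the proof of that theorem. By construction $V$ comes equipped with an open book decomposition, so Kastenholz's theorem from \cite{tk} applies and gives $\|V\| = 0$. Orientability of $V$ is automatic because $M_0$ is orientable and the push maps $\rho_{S_j,\gamma_j}$ lie in $\Diff^+_\partial(M_0)$, so the resulting mapping torus and its Dehn filling along $\partial M_0 \times \D^2$ are both orientable.

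There is essentially no obstacle in this argument, since the two ingredients have already been established. The only point that deserves a moment of care is that Theorem \ref{thmfund4} is applied with the realization that the $4$-manifold it outputs is genuinely closed (the filling of the mapping torus along $\partial M_0\times\D^2$ compactifies to a closed manifold) and orientable, which is precisely the hypothesis needed to invoke \cite{tk}. Once that is observed, the statement is proved.
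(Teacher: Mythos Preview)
Your proposal is correct and matches the paper's own argument: the corollary is stated immediately after citing Kastenholz's result and is presented as a direct consequence of Theorem~\ref{thmfund4} together with \cite{tk}, with no further proof given. Your added remarks on orientability and closedness of $V$ are sound and simply make explicit what the paper leaves implicit.
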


\begin{remark}
The proof above can be generalized in all dimensions by considering the higher dimensional push maps. In particular every finitely presented group can be realized as the fundamental group of an $n$-dimensional open book for $n \geq4$
\end{remark}
	
	\subsection{Open book embedding} We say, $\textrm{OB}(\Sigma_1,\phi_1)$ \emph{open book embeds} in $\textrm{OB}(\Sigma_2,\phi_2)$, if there exists a proper embedding $g : (\Sigma_1,\partial \Sigma_1) \rightarrow (\Sigma_2,\partial \Sigma_2)$ such that $g \circ \phi_1$ is isotopic to $\phi_2 \circ g$ (relative to boundary). Another convenient term for such embeddings is \emph{spun embedding}. 
	
	\begin{exmp}
		
		Since any two proper embeddings of a surface in $\D^5$ are isotopic (due to Whitney), every $3$-dimensional open book admits an open book embedding in $\s^6 = \ob(\D^5,\id)$. A similar statement also holds in higher dimension, as any two proper embeddings of an $n$-manifold ($n \geq 2$) are isotopic in $\D^{2n+1}$.   
		
	\end{exmp}

	\subsection{Planar open book decomposition of a $3$-manifold} \label{planarob}Alexander first showed that every orienteable closed $3$-manifold admits an open book decomposition. We now review the construction of a planar open book decomposition on a $3$-manifold due to Onaran \cite{onaran}. The construction is based on the following lemma due to Etnyre \cite{Et0}.

	\begin{lemma}[\cite{Et0}] \label{etlemma}
		
		Let $M$ be a closed oriented $3$-manifold, and let $(\Sigma,\phi)$ be an open book decomposition of $M$. 
		
		\begin{enumerate}
			
			\item[(a)] Let $K$ be a knot in $M$ such that $K = \{x\} \times [0, 1]/ \phi$ in M for
			some point $x$ in the interior of $\Sigma$. Let $D_x$ denote an open disk which is a neighborhood of $x$ on $\Sigma$ such that $\phi|_{D_x} = \id$. Then $0$-surgery along $K$ gives a new manifold with an open book decomposition having the surgery dual of $K$ as one of the binding components. The surface $\Sigma' = \Sigma \setminus \ D_x$ is the page and the map $\phi' = \phi |_{\Sigma'}$ is the monodromy of the new open book.
			
			\item[(b)] Let $K$ be a knot in $M$ sitting on a page $\Sigma$ of the open book decomposition $(\Sigma, \phi)$. Then $\pm1$-surgery along $K$ with respect to the page framing gives a new manifold with an open book $\ob(\Sigma, \phi \circ \tau_K^{\pm1})$, where $\tau_K^{\pm1}$ denote the right (\emph{or} left) handed Dehn twist.
		\end{enumerate}
		
	\end{lemma}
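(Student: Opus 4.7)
My plan for both parts is to identify a solid-torus neighborhood of $K$ in $M$, compare the open-book description with a surgery description in that neighborhood, and check that the gluing data matches. Throughout, I will work in the model $M = \mathcal{MT}(\Sigma,\phi)\cup_\partial \partial\Sigma\times D^2$.

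For part (a), the hypothesis that $\phi$ is the identity on the disk $D_x$ means that $D_x \times [0,1]$ descends to an embedded solid torus $D_x\times S^1\subset \mathcal{MT}(\Sigma,\phi)$ whose core is exactly $K$. I will then compute the open book $\text{OB}(\Sigma', \phi')$ with $\Sigma' = \Sigma \setminus \text{int}(D_x)$: its mapping torus is $\mathcal{MT}(\Sigma,\phi)$ with the sub-solid-torus $D_x\times S^1$ removed, to which one glues $\partial D_x\times D^2$ along the new boundary torus $\partial D_x\times S^1$. So $\text{OB}(\Sigma',\phi')$ is obtained from $M$ by removing the tubular neighborhood $D_x\times S^1$ of $K$ and regluing a solid torus. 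The only thing to verify is that this regluing is $0$-surgery, i.e.\ that the meridian of the new solid torus $\{pt\}\times \partial D^2$ matches the page-framed longitude of $K$ in $\partial(D_x\times S^1)$. Under the identification $\partial \Sigma'\times\partial D^2=\partial D_x\times S^1$, the new meridian corresponds to $\{pt\}\times S^1$, which is precisely the curve parallel to $K$ in an adjacent page — the $0$-framed longitude. Thus the new open book realizes $0$-surgery, and the new binding component $\partial D_x\times\{0\}$ is the core of the reglued torus, i.e.\ the surgery dual of $K$.

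For part (b) the strategy is a cut-and-reglue along the page. After an isotopy I may assume $\phi$ is the identity in a collar of a page containing $K$, and I parametrize $M$ so that $K\subset \Sigma\times\{1/2\}$ with an annular neighborhood $A$ of $K$ in $\Sigma$. The set $\mathcal{N}=A\times[1/2-\epsilon,1/2+\epsilon]$ is a solid torus tubular neighborhood of $K$ in $M$, and its boundary torus carries a canonical meridian (bounding a disk in $\mathcal{N}$) and a page-framed longitude (a parallel copy of $K$ on $\Sigma\times\{1/2\}$). On the open-book side, $\text{OB}(\Sigma, \phi\circ \tau_K^{\pm 1})$ differs from $\text{OB}(\Sigma,\phi)$ only by inserting a Dehn twist $\tau_K^{\pm 1}$ at the slice $t=1/2$; because this twist is supported in $A$, the modification is localized inside $\mathcal{N}$. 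I will then verify that cutting $\mathcal{N}$ along $A\times\{1/2\}$ and regluing by $\tau_K^{\pm 1}$ takes the old meridian to the curve $\mu\pm\lambda$ on $\partial\mathcal{N}$, which is the definition of $(\pm 1)$-surgery with the page framing. This identifies the two manifolds outside $\mathcal{N}$ while matching the gluing data on the boundary torus.

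The main obstacle, as usual in open-book arguments, will be bookkeeping the orientations and signs: matching my conventions for right/left-handed Dehn twists and for the orientation of the page framing to the sign of the surgery coefficient. I expect this to come down to tracking the intersection of the image of the meridian under $\tau_K^{\pm 1}\times\text{id}$ with the longitude on $\partial\mathcal{N}$, so the verification is a single local computation on the annulus cross interval. Once the sign is pinned down in the local model, both statements follow from gluing up the unchanged exterior.
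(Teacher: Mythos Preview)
Your proposal is correct and is essentially the standard argument for this well-known lemma. Note, however, that the paper does not prove this statement at all: it is quoted as a lemma due to Etnyre \cite{Et0} and used as a black box in the discussion of planar open books, so there is no ``paper's own proof'' to compare against. Your outline matches the usual proof found in Etnyre's lecture notes, including the identification of the page-framed longitude with the new meridian in part (a) and the local cut-and-reglue by $\tau_K^{\pm1}$ in part (b); the only work left, as you say, is the sign bookkeeping on the annulus, which is routine.
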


	\begin{figure}[htbp] 
		
		\centering
		\def\svgwidth{11cm}
		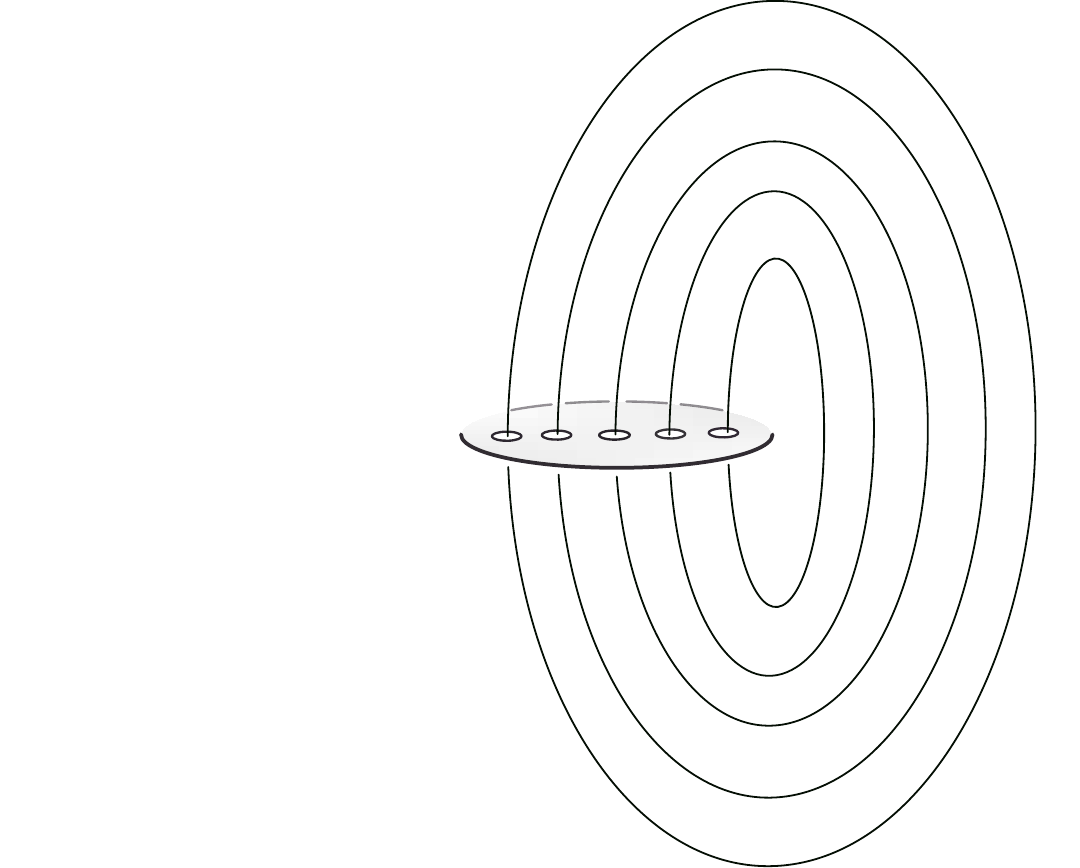
		\caption{}
		\label{figbraid}
		
	\end{figure}
	
	\noindent By Lickorish \cite{lickorish} and Wallace \cite{wallace}, every closed oriented $3$-manifold can be obtained from $\s^3$ by $\pm 1$ surgery along an $n$-component link $\mathcal{L}$ of unknots.  Moreover, $\mathcal{L}$ can be chosen so that $\mathcal{L}$ is braided about an unknot $U$ in $\s^3$ and each component of $\mathcal{L}$ links $U$ exactly once. Thus, $\mathcal{L}$ can be represented as a \emph{pure} braid with $n$ strands. The pure braid group on $n$-strands is generated by the braids $A_{i,j}$ ($1 \leq i < j \leq n$). See Figure \ref{figbraid}. We can starts with the trivial open book of $\s^3$ with disk page such that this braided link intersects a disk page in $n$ disjoint points. Our candidate planar page is obtained by deleting disjoint open disks around these $n$ interesect points. To apply statement $(a)$ of Lemma \ref{etlemma}, we need to undo the twist $A_{i,j}$. For this we apply Dehn twist on a parallel page, along a simple closed curve homologous to the sum of the $i$th and $j$th interior boundary components. The Dehn twist is positive (\emph{or} negative) if the linking number between the $i$th and $j$th strands is $-1$(\emph{or} $1$). This is equivalent to a \emph{Rolfsen twist} on the surgered $3$-manifold and thus does not change the diffeomorphism type. We also need that the surgery coefficients along each strand be zero. For example, if the $i$th strand has surgery coefficient $m_i$ ($> 0$), then we consider $m_i$ disjoint parallel copies of the page and apply a negative Dehn twist on each copy. This is equivalent to doing the  \emph{blow down} operations on the surgered $3$-manifold along an unlink with $m_i$ components. Therefore, the diffeomorphism type is again unchanged. Thus, we get an open book decomposition of the surgered $3$-manifold with page $\Sigma_{0,n+1}$ and monodromy $\phi$, where $\phi$ is a composition of Dehn twists along the boundary parallel curves and simple closed curves homologous to the sum of the $i$th and $j$th interior boundary components.

	\begin{figure}[htbp] 
	
	\centering
	\def\svgwidth{12cm}
	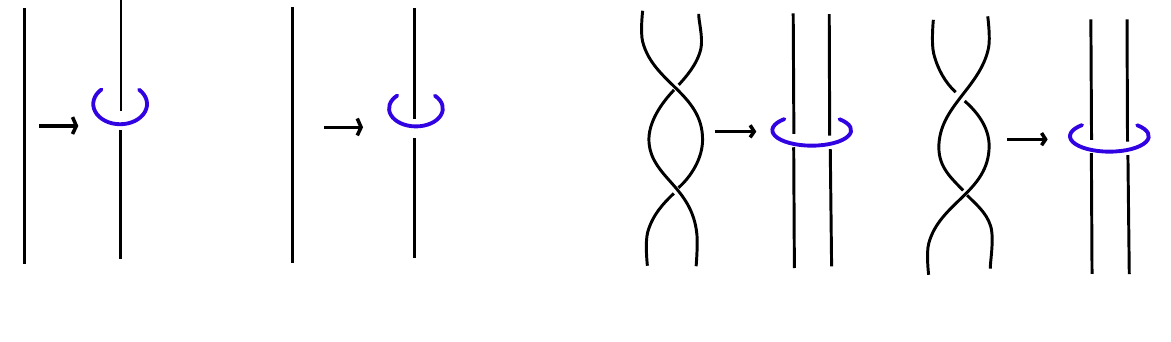
	\caption{}
	\label{figblowup}
	
\end{figure}

\noindent Similar methods can be used to create planar open book decompositions from other surgery diagrams also. We shall discuss some explicit constructions of planar open books in section \ref{codim1spunembed}. The following was proved by Onaran \cite{onaran}.

	\begin{theorem}[Onaran \cite{onaran}] \label{thmonaran}
		
	Let $M$ be a closed oriented $3$-manifold obtained by $\pm1$ surgery on $\mathcal{L} \subset \s^3$. Then, $M$ has a planar open book decomposition whose page is a disk with $n$ punctures and whose monodromy is obtained from a pure braid presentation of $\mathcal{L}$ and its surgery coefficients.
		
	\end{theorem}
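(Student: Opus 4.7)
The plan is to begin with the trivial open book $\ob(\D^2, \id)$ of $\s^3$, whose binding is an unknot $U$, and to reinterpret the Kirby moves that transform the $\pm 1$-surgery diagram on $\mathcal{L}$ into a $0$-surgery on a trivial braid as a sequence of Dehn twists on parallel pages. Applying the two parts of Lemma \ref{etlemma} then yields an open book for $M$ whose page is $\Sigma_{0,n+1}$ and whose monodromy is the recorded composition of these twists.

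First I would fix a pure braid presentation $\beta = A_{i_1,j_1}^{\varepsilon_1} \cdots A_{i_\ell,j_\ell}^{\varepsilon_\ell}$ of $\mathcal{L}$ around $U$ on $n$ strands, with each component of $\mathcal{L}$ linking $U$ exactly once, and select $n$ disjoint intersection points $x_1,\dots,x_n$ (one per strand) on a single disk page of the trivial open book of $\s^3$. The candidate planar page is obtained by deleting small disjoint open disks around the $x_i$, so it is $\Sigma_{0,n+1}$.

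Next I would convert the given $\pm 1$-surgery diagram on the braided link $\mathcal{L}$ into an equivalent Kirby diagram in which the link is a trivial braid (all strands straight) and each strand carries framing $0$. This is achieved by (i) a Rolfsen twist for each braid generator $A_{i_k,j_k}^{\varepsilon_k}$, which straightens that crossing while its effect on the candidate open book is recorded as a Dehn twist on a parallel page along a simple closed curve enclosing the $i_k$th and $j_k$th punctures, with sign dictated by $\varepsilon_k$ and the crossing linking number, and (ii) a blow-down for each surgery coefficient $\pm 1$, recorded as a boundary-parallel Dehn twist on another parallel page around the corresponding puncture. By Lemma \ref{etlemma}(b), each such Dehn twist on a parallel page is exactly a page-framed $\pm 1$-surgery on the associated curve, so the open book at every intermediate stage continues to describe $M$. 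Once the link has been fully straightened and $0$-framed, the $n$ strands become fibers $\{x_i\} \times [0,1]/\id$ of the page fibration near each $x_i$, so Lemma \ref{etlemma}(a) applies simultaneously to each strand: $0$-surgery converts each into a new binding component and deletes the disk around $x_i$ from the page, producing an open book with page $\Sigma_{0,n+1}$ and monodromy $\phi$ equal to the product of all accumulated Dehn twists.

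The main obstacle I expect is sign and ordering bookkeeping: one must verify that each braid generator $A_{i_k,j_k}^{\varepsilon_k}$ produces exactly one Dehn twist of the correct sign on the punctured disk, that the boundary-parallel twists encoding the $\pm 1 \to 0$ framing changes commute with (or are composed in the correct order relative to) the crossing twists, and that the cumulative composition, when reinterpreted via Lemma \ref{etlemma}(b), reproduces the original surgery presentation of $M$. Once labels for the parallel pages and a convention for the braid word are fixed, each Rolfsen move and blow-down is individually a direct application of Lemma \ref{etlemma}, and the combined construction gives the desired planar open book of $M$.
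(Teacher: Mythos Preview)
Your proposal is correct and follows essentially the same approach as the paper's construction (which in turn summarises Onaran's argument): start from the trivial disk open book of $\s^3$, puncture at the $n$ braid intersection points, undo each pure-braid generator $A_{i,j}$ by a Rolfsen twist recorded as a Dehn twist on a parallel page around the $i$th and $j$th punctures, adjust each strand's framing to $0$ by blow-downs recorded as boundary-parallel Dehn twists, and then invoke Lemma~\ref{etlemma}(a) for the resulting $0$-surgeries. The only refinement to note is that after the Rolfsen moves the framings on the strands are no longer the original $\pm 1$ but some integers $m_i$, so the number of boundary-parallel twists needed at each puncture is $|m_i|$ rather than one; this is exactly the bookkeeping issue you flag at the end.
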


	\section{Codimension--1 spun embedding} \label{codim1spunembed} 
	
		It is well known that the inclusion $\so(2) \hookrightarrow \so(3) \hookrightarrow \dots \hookrightarrow \so(k)$ ($k \geq 3$) induces an epimorphism from $\pi_1(\so(2)) = \Z \rightarrow \Z_2 = \pi_1(\so(n))$. If we consider the standard embedding of $\s^n$ in $\s^{n+1}$ as the equator $n$-sphere, then the restriction of $\sigma_{n+1}$ on $\s^n \times [0,1]$ is $\sigma_n$. In particular, we get the following.

	\begin{theorem}\label{thm1}
		
		For $n \geq 1$, $\ob(\s^n \times [0,1], \sigma^i_n)$ open book embeds in $\ob(\s^{n+1} \times [0,1], \sigma^i_{n+1})$.

	\end{theorem}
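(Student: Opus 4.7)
The plan is to exhibit an explicit embedding of pages and, with a careful choice of loops defining the two twists, check that it intertwines the monodromies on the nose. For the page embedding I would take the standard equatorial inclusion $\iota : \s^n \hookrightarrow \s^{n+1}$ coming from the block embedding $\R^{n+1}\hookrightarrow \R^{n+2}$ that fixes the last coordinate, and set $f := \iota\times \id : \s^n\times[0,1] \hookrightarrow \s^{n+1}\times[0,1]$. This is clearly a proper embedding sending $\s^n\times\{0,1\}$ into $\s^{n+1}\times\{0,1\}$, so it is a valid candidate for a spun embedding at the level of pages.

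For the monodromies, the crucial input is the observation stated in the paragraph preceding the theorem: the block inclusion $\so(n+1)\hookrightarrow\so(n+2)$ induces a surjection $\pi_1(\so(n+1))\to\pi_1(\so(n+2))$ for every $n\geq 1$ (the quotient $\Z\to\Z_2$ when $n=1$, and the identity $\Z_2\to\Z_2$ when $n\geq 2$). I would therefore pick one loop $\alpha : [0,1]\to \so(n+1)$ whose class generates $\pi_1(\so(n+1))$, use it to define $\sigma_n(y,t)=(\alpha(t)\cdot y,t)$, and use its image $\widetilde\alpha$ under the block inclusion---which still generates $\pi_1(\so(n+2))$---to define $\sigma_{n+1}(y',t)=(\widetilde\alpha(t)\cdot y',t)$. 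Since loops in the same homotopy class yield isotopic twist maps, these are honest representatives of the sphere twists named in the statement.

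The verification is then formal: block rotations preserve the equator, so $\widetilde\alpha(t)\cdot\iota(y) = \iota(\alpha(t)\cdot y)$ for every $(y,t)\in\s^n\times[0,1]$; hence $\sigma_{n+1}\circ f = f\circ \sigma_n$, and iterating yields $\sigma^i_{n+1}\circ f = f\circ \sigma^i_n$ for every $i$. This is a strict equality of diffeomorphisms, which certainly implies the isotopy condition required of an open book embedding. I do not anticipate any serious obstacle---the content of the theorem is just the compatibility between the equatorial embedding of spheres and the block inclusion of special orthogonal groups---the only point one has to take care of is to choose the loop defining $\sigma_{n+1}$ inside $\so(n+1)\subset\so(n+2)$, which the surjectivity recalled above makes possible in all dimensions $n\geq 1$.
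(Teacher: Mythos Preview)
Your proposal is correct and follows precisely the argument the paper gives: the paper states, immediately before the theorem, that the equatorial embedding $\s^n\hookrightarrow\s^{n+1}$ makes the restriction of $\sigma_{n+1}$ to $\s^n\times[0,1]$ equal to $\sigma_n$, invoking the epimorphism $\pi_1(\so(n+1))\to\pi_1(\so(n+2))$ induced by the block inclusion. Your write-up simply spells this out in more detail, including the explicit verification $\widetilde\alpha(t)\cdot\iota(y)=\iota(\alpha(t)\cdot y)$ and the observation that strict commutation implies the required isotopy.
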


	\noindent Let $M = \ob(\Sigma_{0,n+1}, \phi_M)$ be a planar open book decomposition and say $\phi_M$ has a word presentation in terms of Dehn twists along a finite set of simple closed curves $\mathcal{C}$ on $\Sigma_{0,n+1}$. Let $\delta_i$ ($1 \leq i \leq n$) denote the simple closed curves parallel to the boundary components of $\Sigma_{0,n+1}$. see Figure \ref{figplanar}.
	
	\begin{figure}[htbp] 
		
		\centering
		\def\svgwidth{10cm}
		%% Creator: Inkscape 1.1.2 (0a00cf5339, 2022-02-04), www.inkscape.org
%% PDF/EPS/PS + LaTeX output extension by Johan Engelen, 2010
%% Accompanies image file '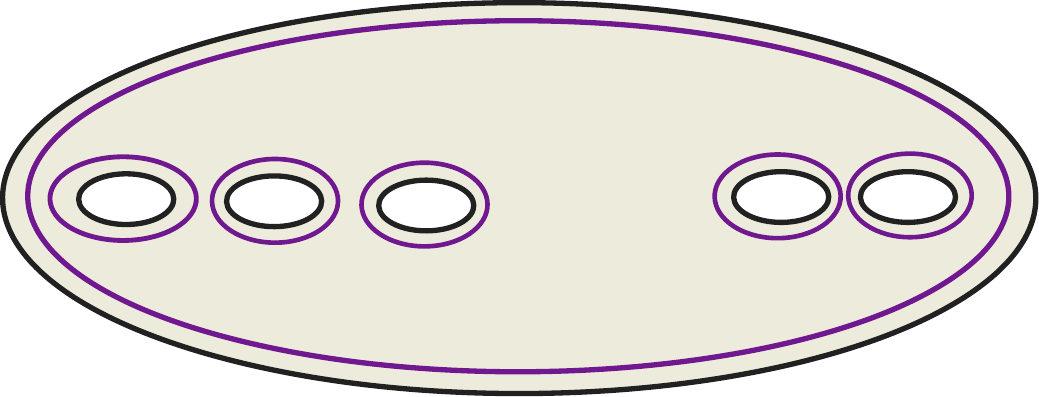' (pdf, eps, ps)
%%
%% To include the image in your LaTeX document, write
%%   \input{<filename>.pdf_tex}
%%  instead of
%%   \includegraphics{<filename>.pdf}
%% To scale the image, write
%%   \def\svgwidth{<desired width>}
%%   \input{<filename>.pdf_tex}
%%  instead of
%%   \includegraphics[width=<desired width>]{<filename>.pdf}
%%
%% Images with a different path to the parent latex file can
%% be accessed with the `import' package (which may need to be
%% installed) using
%%   \usepackage{import}
%% in the preamble, and then including the image with
%%   \import{<path to file>}{<filename>.pdf_tex}
%% Alternatively, one can specify
%%   \graphicspath{{<path to file>/}}
%% 
%% For more information, please see info/svg-inkscape on CTAN:
%%   http://tug.ctan.org/tex-archive/info/svg-inkscape
%%
\begingroup%
  \makeatletter%
  \providecommand\color[2][]{%
    \errmessage{(Inkscape) Color is used for the text in Inkscape, but the package 'color.sty' is not loaded}%
    \renewcommand\color[2][]{}%
  }%
  \providecommand\transparent[1]{%
    \errmessage{(Inkscape) Transparency is used (non-zero) for the text in Inkscape, but the package 'transparent.sty' is not loaded}%
    \renewcommand\transparent[1]{}%
  }%
  \providecommand\rotatebox[2]{#2}%
  \newcommand*\fsize{\dimexpr\f@size pt\relax}%
  \newcommand*\lineheight[1]{\fontsize{\fsize}{#1\fsize}\selectfont}%
  \ifx\svgwidth\undefined%
    \setlength{\unitlength}{498.43888802bp}%
    \ifx\svgscale\undefined%
      \relax%
    \else%
      \setlength{\unitlength}{\unitlength * \real{\svgscale}}%
    \fi%
  \else%
    \setlength{\unitlength}{\svgwidth}%
  \fi%
  \global\let\svgwidth\undefined%
  \global\let\svgscale\undefined%
  \makeatother%
  \begin{picture}(1,0.3816413)%
    \lineheight{1}%
    \setlength\tabcolsep{0pt}%
    \put(0,0){\includegraphics[width=\unitlength,page=1]{planar.pdf}}%
    \put(0.11511548,0.11070586){\color[rgb]{0,0,0}\makebox(0,0)[lt]{\lineheight{1.25}\smash{\begin{tabular}[t]{l}$\delta_1$\end{tabular}}}}%
    \put(0.24780559,0.10254019){\color[rgb]{0,0,0}\makebox(0,0)[lt]{\lineheight{1.25}\smash{\begin{tabular}[t]{l}$\delta_2$\end{tabular}}}}%
    \put(0.3968268,0.10662307){\color[rgb]{0,0,0}\makebox(0,0)[lt]{\lineheight{1.25}\smash{\begin{tabular}[t]{l}$\delta_3$\end{tabular}}}}%
    \put(0.6764968,0.1209128){\color[rgb]{0,0,0}\makebox(0,0)[lt]{\lineheight{1.25}\smash{\begin{tabular}[t]{l}$\delta_{n-1}$\end{tabular}}}}%
    \put(0.83776636,0.11575583){\color[rgb]{0,0,0}\makebox(0,0)[lt]{\lineheight{1.25}\smash{\begin{tabular}[t]{l}$\delta_n$\end{tabular}}}}%
    \put(0.48664777,0.03721594){\color[rgb]{0,0,0}\makebox(0,0)[lt]{\lineheight{1.25}\smash{\begin{tabular}[t]{l}$\delta_{n+1}$\end{tabular}}}}%
    \put(0.53768241,0.17603011){\color[rgb]{0,0,0}\makebox(0,0)[lt]{\lineheight{1.25}\smash{\begin{tabular}[t]{l}$\cdots$\end{tabular}}}}%
  \end{picture}%
\endgroup%

		\caption{}
		\label{figplanar}
		
	\end{figure}
	
	\begin{proof}[Proof of Theorem \ref{thm0}]
		
	Consider $\s^1$ embedded in $\s^2$ as the great circle. This induces a canonical embedding of $\s^1 \times [0,1]$ in $\s^2 \times [0,1]$. Let $V_n$ denote the boundary connected sum of $n$ copies of $\s^2 \times [0,1]$. We can take the boundary connected sum so that it induces an embedding $\iota$ of $\Sigma_{0,n+1}$ as the boundary connected sum of $n$ copies of $\s^1 \times [0,1]$. See Figure \ref{fig3demb}.

	\begin{figure}[htbp] 
		
		\centering
		\def\svgwidth{10.4cm}
		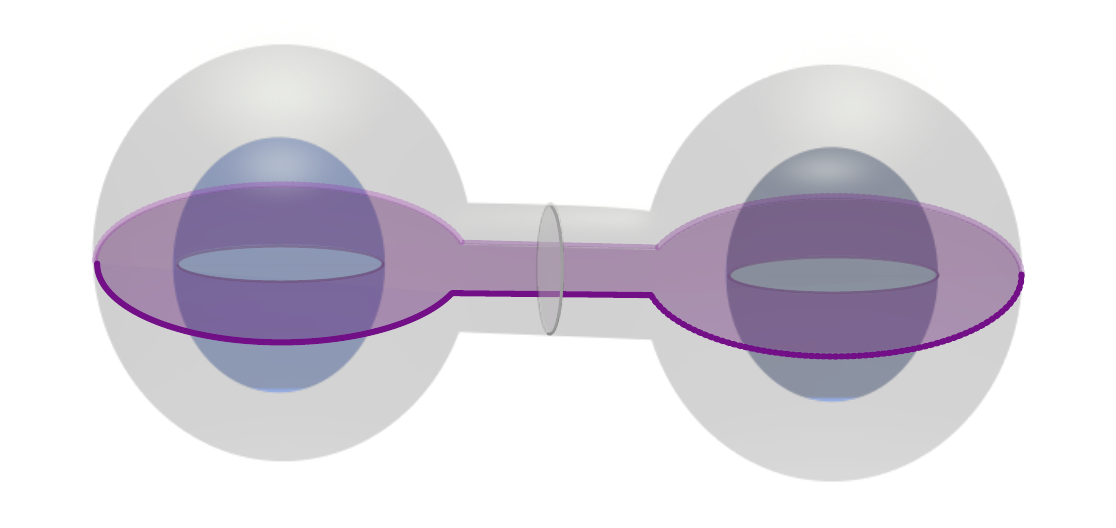
		\caption{A proper embedding of $\Sigma_{0,3}$ in $(\s^2 \times [0,1]) \#_b (\s^2 \times [0,1])$.}
		\label{fig3demb}
		
	\end{figure}

	\noindent The curve $\delta_i$ can be identified with the core circle of the $i$-th copy of $\s^1 \times [0,1]$ in $\Sigma_{0,n+1}$ for $i=1,2,...,n$. Let $S_i$ denote the core $2$-sphere of the corresponding $\s^2 \times [0,1]$ in $V_n$. Let $\gamma_{ij}$ be a simple closed curve on $\Sigma_{0,n+1}$ such that $[\gamma_{ij}] = [\delta_i] + [\delta_j]$, where $[c]$ denotes the homology class. Let $S_{ij}$ be an embedded $2$-sphere in $V_n$ obtained by tubing $S_i$ and $S_j$ such that $\gamma_{ij}$ is a great circle of $S_{ij}$ (see Figure \ref{fig3demb}). Note that $[S_{ij}] = [S_i] + [S_j]$. As observed before, the $2$-dimensional twist map along $S_{ij}$, $\sigma_{S_{ij}}$ induces a Dehn twist along $\gamma_{ij}$. By Proposition \ref{twistprop}, $\sigma_{S_{ij}} = \sigma_{S_j}^{-1} \circ \sigma_{S_i}^{-1} = \sigma_{S_i}^{-1} \circ \sigma_{S_j}^{-1}$. Similarly, for a simple closed curve $\gamma_{i_1,i_2,\dots,i_k}$ homologous to $[\delta_{i_1}] + [\delta_{i_2}] + \cdots + [\delta_{i_k}]$, we can find a $2$-sphere $S_{i_1i_2\dots i_k}$ such that $\sigma_{S_{i_1\dots i_k}}$ induces the Dehn twist on $\Sigma_{0,n+1}$ along $\gamma_{i_1,i_2,\dots,i_k}$, and $\sigma_{S_{i_1\dots i_k}}$ is generated by the $\sigma_{S_{i_j}}$s ($1 \leq j \leq k$). Now, $\phi_M = \prod_{\gamma_1,\dots,\gamma_m \in \mathcal{C}} \tau^{\alpha_1}_{\gamma_1} \circ \cdots \circ \tau^{\alpha_m}_{\gamma_m}$, for some integers $m, \alpha_1,\dots \alpha_m$. For each $\gamma_i$ we can find a $2$-sphere $S_{\gamma_i}$ in $V_n$ such that $\sigma_{\gamma_i}$ induces $\tau_{\gamma_i}$ on $\Sigma_{0,n+1}$. 
	
	\noindent We start with the canonical embedding of $\Sigma_{0,n+1} \times [0,1]$ in $V_n \times [0,1]$ given by $(x,t) \mapsto (\iota(x), t)$. We divide $V_n \times [0,1]$ in $m$ equal parts, $V_n \times [\frac{k}{m}, \frac{k+1}{m}]$ ($0 \leq k \leq m-1$), and apply $\sigma^{\alpha_k}_{S_{i_k}}$ on the fiber $V_n \times \{ \frac{2k+1}{2m} \}$ (for each $k$). This induces an open book embedding of $\ob(\Sigma_{0,n+1}, \phi_M)$ in $\ob(V_n, \prod_{\gamma_1,\dots,\gamma_m \in \mathcal{C}} \sigma^{\alpha_1}_{\gamma_1} \circ \cdots \circ \sigma^{\alpha_m}_{\gamma_m})$. Recall that every $\sigma_{\gamma_i}$ is a composition of twists along the boundary spheres $S_1, S_2, \dots, S_n$ and $\tw(V_n) = (\Z_2)^n$. Therefore, $\prod_{\gamma_1,\dots,\gamma_m \in \mathcal{C}} \sigma^{\alpha_1}_{\gamma_1} \circ \cdots \circ \sigma^{\alpha_m}_{\gamma_m} = \prod_{q=1}^{n} \sigma^{\beta_q}_{S_q}$, for some integers $\beta_1,\dots,\beta_n$. Note that $\ob(V_n, \prod_{q=1}^{n} \sigma^{\beta_q}_{S_q}) = \ob(S_1 \times [0,1], \sigma^{\beta_1}_{S_1}) \# \ob(S_2 \times [0,1], \sigma^{\beta_2}_{S_2}) \# \cdots \# \ob(S_n \times [0,1], \sigma^{\beta_n}_{S_n})$. Thus, $\ob(V_n, \prod_{\gamma_1,\dots,\gamma_m \in \mathcal{C}} \sigma^{\alpha_1}_{\gamma_1} \circ \cdots \circ \sigma^{\alpha_m}_{\gamma_m})$ is diffeomorphic to $W_{i,j} = (\#^i\s^2 \times \s^2) \# (\#^j \s^2 \widetilde{\times}\s^2)$, where $i$ and $j$ are the numbers of even and odd $\beta_q$, respectively, and $i + j = n$. 
		
	\end{proof}

	We now discuss some explicit constructions of codimension $1$ spun embeddings of $3$ and $4$ manifolds.  
	
	\subsection{Lens spaces} Let $L(p,q)$ denote the lens space obtained by a $-\frac{p}{q}$ surgery on an unknot in $\s^3$. Here, $p$ and $q$ are coprime positive integers. Consider the following continued fraction expansion.
	
	\[
	-\frac{p}{q} = a_1 - \cfrac{1}{a_2 - \cfrac{1}{a_3 - \cdots - \cfrac{1}{a_k}}} = [a_1,a_2,\dots,a_k], \ \ a_i \leq -2, \ \  i = 1, 2, \dots, k.
	\]
	
	The lens space $L(p,q)$ is obtained from a linear plumbing tree with $k$ vertices with labels $a_1,\dots, a_k$. See the upper diagram in Figure \ref{figlensplanar}

	\begin{figure}[htbp] 
		
		\centering
		\def\svgwidth{11cm}
		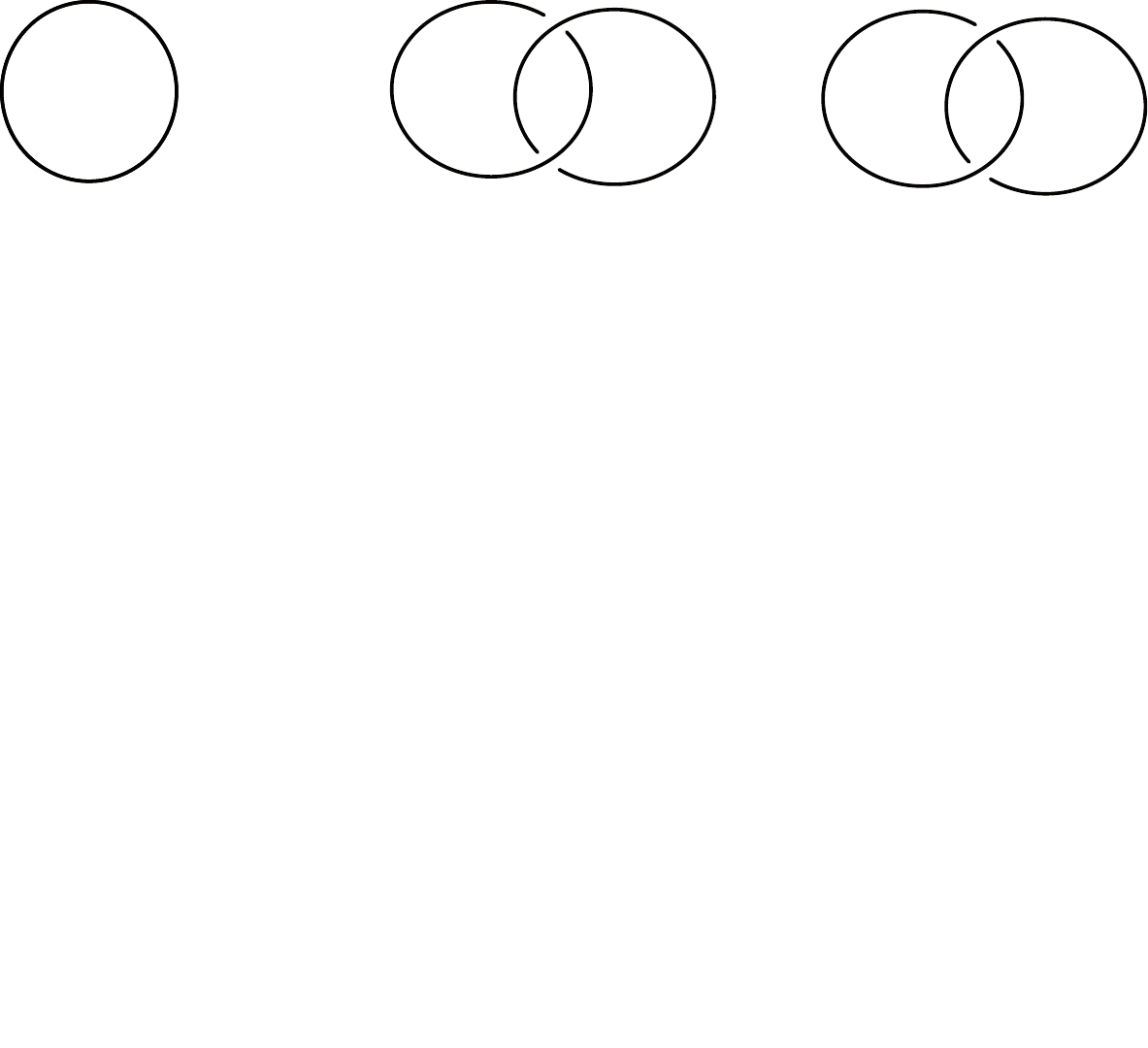
		\caption{}
		\label{figlensplanar}
		
	\end{figure}

	\begin{theorem} \label{lensembed}
		
	Let $-\frac{p}{q} = [a_1,a_2,\dots,a_k]$. Then $L(p,q)$ spun embeds in $\#^k \s^2 {\times} \s^2$, if $a_i( 1\leq i \leq k)$ is even and spun embeds in $\#^k \s^2 \widetilde{\times} \s^2$ otherwise.
		
	\end{theorem}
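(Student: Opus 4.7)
The plan is to realize $L(p,q)$ by a planar open book coming from the linear-plumbing surgery presentation, and then to push this open book through the recipe of Theorem \ref{thm0} while tracking the parities of the resulting sphere twist exponents.

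First, I would build the planar open book. Using the continued fraction $-p/q = [a_1,\dots,a_k]$, $L(p,q)$ is the boundary of a linear plumbing on $k$ unknots with framings $a_1,\dots,a_k$. Following Section \ref{planarob}, I would realize this chain as a pure braid about an unknotted axis $U \subset \s^3$ and then perform the Rolfsen twists and blow-downs required to set every framing to zero. This produces a planar open book $\ob(\Sigma_{0,k+1}, \phi_{L(p,q)})$ in which $\phi_{L(p,q)}$ is a product of Dehn twists along the boundary-parallel curves $\delta_i$ (with exponents $b_i = \sum_{j=1}^{i} a_j + 2(i-1)$) and along curves $\gamma_{i,i+1}$ satisfying $[\gamma_{i,i+1}] = [\delta_i] + [\delta_{i+1}]$ (with exponents as indicated in Figure \ref{figlensplanar}).

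Next, I would embed $\Sigma_{0,k+1}$ in $V_k = \#^k_b (\s^2 \times [0,1])$ exactly as in the proof of Theorem \ref{thm0}, so that each $\delta_i$ becomes a great circle of the $i$-th core sphere $S_i$. Each $\tau_{\delta_i}$ then lifts to the sphere twist $\sigma_{S_i}$, and each $\tau_{\gamma_{i,i+1}}$ lifts to $\sigma_{S_i}^{-1} \circ \sigma_{S_{i+1}}^{-1}$ by Proposition \ref{twistprop}. Interpolating these sphere twists along $V_k \times [0,1]$ yields a spun embedding of $\ob(\Sigma_{0,k+1},\phi_{L(p,q)})$ into $\ob\!\left(V_k, \prod_{q=1}^{k} \sigma_{S_q}^{\beta_q}\right)$, which is diffeomorphic to $W_{i,j}$ with $i + j = k$, where $i$ and $j$ count the even and odd $\beta_q$ respectively.

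The crux is a short mod-$2$ count: using the explicit exponents from Figure \ref{figlensplanar}, one verifies that all $\beta_q$ are even precisely when all $a_i$ are even. In that case $j = 0$ and the target is $W_{k,0} = \#^k \s^2 \times \s^2$. Otherwise $j \geq 1$, and repeated application of the classical identity $\s^2 \times \s^2 \# \s^2 \widetilde{\times} \s^2 \iso \s^2 \widetilde{\times} \s^2 \# \s^2 \widetilde{\times} \s^2$ reduces $W_{i,j}$ to $\#^{i+j} \s^2 \widetilde{\times} \s^2 = \#^k \s^2 \widetilde{\times} \s^2$. The main technical hurdle is the first step: extracting the precise Dehn twist word for $\phi_{L(p,q)}$ from the braid-plus-normalization procedure, and in particular verifying the exponents displayed in Figure \ref{figlensplanar}. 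Once this word is in hand, the parity check is routine and the conclusion follows at once from Theorem \ref{thm0} together with the $4$-manifold identity above.
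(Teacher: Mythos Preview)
Your overall strategy matches the paper's, but you mis-identify the Dehn twist curves arising from the Sch\"onenberger construction. The handle slides encoded in Figure~\ref{figlensplanar} produce a braid whose twist boxes are \emph{nested}: for each $i\ge 2$ there is a box with $a_i+2$ full twists around strands $i,i+1,\dots,k$, and an $a_1+1$ box around all $k$ strands. Hence the non-boundary-parallel twist curves in $\phi_{L(p,q)}$ are the curves $\gamma_{i,i+1,\dots,k}$ with $[\gamma_{i,i+1,\dots,k}]=[\delta_i]+\cdots+[\delta_k]$, \emph{not} the adjacent-pair curves $\gamma_{i,i+1}$ you describe. With the correct curves one has $\sigma_{S_{i,\dots,k}}=\sigma_{S_i}\circ\cdots\circ\sigma_{S_k}$, and summing all contributions the exponent of $\sigma_{S_q}$ in the target monodromy is $\sum_{j=1}^{q}a_j \pmod 2$; your parity dichotomy then goes through exactly as you outline.

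If instead you intended to braid the chain link directly (without handle slides), adjacent-pair curves $\gamma_{i,i+1}$ would indeed appear, but then the exponents on the $\tau_{\delta_i}$ are \emph{not} the $b_i=\sum_{j\le i}a_j+2(i-1)$ you quote---those framings come specifically from the handle slides. So the two descriptions must not be mixed: pick one braiding and carry the bookkeeping through consistently.
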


    \begin{proof}
    	
     We will use the construction of planar open books from surgery diagrams of lens spaces as given by Sch{\"o}nenberger (section $3.1$ in \cite{Sch}). One starts with the linear plumbing diagram corresponding to the continued fraction $[a_1,a_2,\dots,a_k]$, and then perform handle slides. First, sliding the unknot with label $a_2$ along the unknot with label $a_1$ changes its surgery coefficient from $a_2$ to $b_2 = a_1 + a_2 + 2$ and the linking number between the unknots with label $b_1 = a_1$ and $b_2$ is $a_1 + 1$. Then, we slide the unknot with label $a_3$ over the unknot with label $a'_2$. This changes $a_3$ to $b_3 = a_1 + a_2 + a_3 + 4$. The linking number between unknots with labels $b_1$ and $b_3$ is $a_1 + 1$, and the linking number between unknots with labels $b_2$ and $b_3$ is $a_2 +1$. Continuing like this, we get a surgery diagram consisting of a closed $k$-braid with framings (as labels) $b_1, b_2,\dots,b_k$ on its strands, where $b_i = \sum_{1}^{i}a_j + 2(i-1)$. Moreover, there is a $a_i + 2$-twist around the strands $b_i, b_{i+1},\dots, b_k$ for all $i \in \{2,\dots,k-1\}$ and a $a_1 + 1$-twist around the strands $b_1, b_2, \dots, b_k$. See the lower picture in Figure \ref{figlensplanar}. 
     \noindent We recall the construction of planar open book decomposition discussed in section \ref{planarob}. A similar application of the \emph{Rolfsen twist} and the \emph{blow up} operation on the surgery diagram gives us a planar open book decomposition of $L(p,q)$ with page $\Sigma_{0,k+1}$ and monodromy $\psi$. Here, $\psi$ is a composition of Dehn twists along the boundary parallel curves $\gamma_1, \gamma_2,\dots,\gamma_k$ and the simple closed curves $\gamma_{ij}$ homologous to $\sum_{j=i}^{k}[\gamma_j]$ for $i \in \{1,2,\dots,k-1\}$. In fact, the strand with surgery coefficient $b_i$ contributes $b_i$ copies of positive Dehn twist along $\gamma_i$ in the presentation of $\psi$. The $a_i +2$-twist contributes $a_i + 2$ copies of positive Dehn twists along the simple closed curve $\gamma_{i,i+1,\dots,k}$ bounding the labelled strands $b_i, b_{i+1},\dots,b_k$ for $i \in \{2,\dots,k\}$, and there are $a_1 + 1$ copies of positive Dehn twists along the simple closed curve $\gamma_{1,2,\dots,k}$ bounding the strands $b_1,b_2,\dots,b_k$. See Figure \ref{lplr}.
     
      \begin{figure}[htbp]

     	\centering
     	\def\svgwidth{9cm}
     	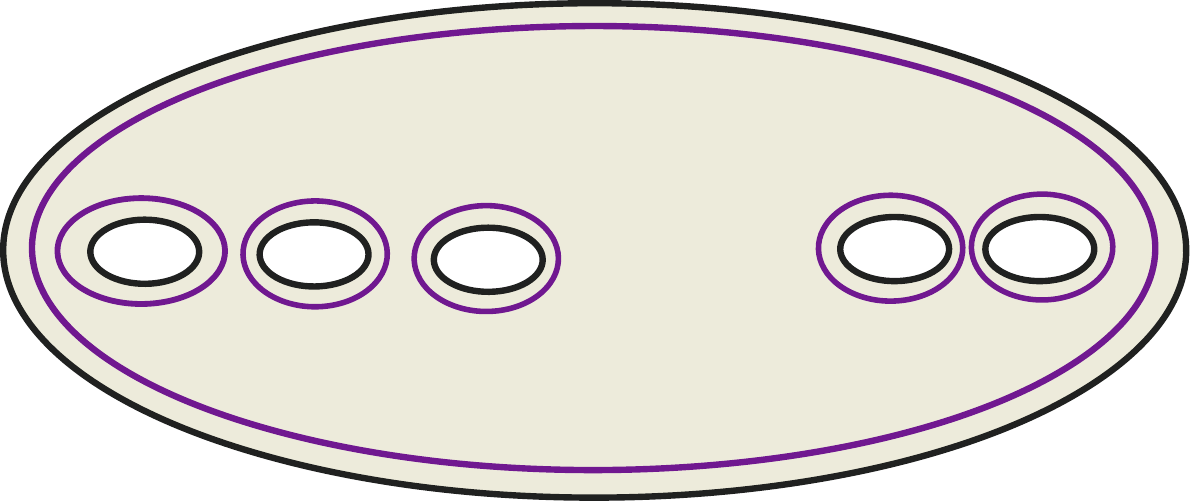
     	\caption{}
     	\label{lplr}
     	
     \end{figure} 
      
     \noindent Similar to the proof of Theorem \ref{thm0}, we properly embed $\Sigma_{0,k+1}$ in $M_k = \#^k_b (\s^2 \times [0,1])$ such that twist along the $i$th sphere, $\sigma_i$, induces Dehn twist along $\gamma_i$. Let $\sigma_{i,i+1,\dots,k}$ denote the twist along the sphere obtained by tubing the spheres with labels $i,i+1,\dots,k$ ($i \geq 1$). Then $\sigma_{i,i+1,\dots,k}$ induces the Dehn twist along $\gamma_{i,i+1,\dots,k}$. Since $Twist(M_k) = (\Z_2)^k$ and $\sigma_{i,i+1,\dots,k} = \sigma_i \circ \sigma_{i+1}\circ \cdots \circ \sigma_k$, $\ob(\Sigma_{0,k+1}, \psi)$ spun embeds in $\ob(M_k, \Psi)$, where
      
      \begin{align*}
      \Psi &= \sigma_1^{ a_1+2} \circ \sigma_2^{ a_1 + a_2+4} \circ \cdots \circ \sigma_{k-1}^{\sum_{i=1}^{k-1}a_i+2(k-1)} \circ \sigma_k^{\sum_{j=1}^ka_j+2k}  \\
      	&= \sigma_1^{a_1} \circ \sigma_2 ^{ a_1 + a_2} \circ \cdots \circ \sigma_{k-1}^{\sum_{i=1}^{k-1}a_i} \circ \sigma_k^{\sum_{j=1}^ka_j}\\
       \end{align*}
      
      \vspace{0.25cm}
      
      \noindent For $a_i (1\leq i \leq k)$ even , $\ob(M_k, \Psi) = \#^k \s^2 {\times} \s^2$. If for some $i$, $a_i$ odd, $\ob(M_k,\Psi) =  \#^k \s^2 \widetilde{\times} \s^2$, as $\s^2 \widetilde{\times} \s^2 \# \s^2 \times \s^2 = \s^2 \widetilde{\times} \s^2 \# \s^2 \widetilde{\times} \s^2$.
     
    \end{proof}

\begin{corollary} \label{cor1}
	
	Let $L(p,1)$ denote the lens space obtained by a $p$ surgery on $\s^3$.
	
	\begin{enumerate}
		
		\item For $p$ even, $L(p,1)$ open book embeds in $\s^2 \times \s^2$. 
		
		\item For $p$ odd, $L(p,1)$ open book embeds in $\s^2 \widetilde{\times} \s^2$.
		
	\end{enumerate}

\end{corollary}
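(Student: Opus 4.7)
The approach is to specialize Theorem \ref{lensembed} to the single-surgery case $k = 1$. The lens space $L(p,1)$ arises from one integer surgery on an unknot, so its continued fraction expansion in the sense of Theorem \ref{lensembed} has exactly one term: $-p/1 = [-p]$, with $a_1 = -p$. First I would substitute $k = 1$ into the conclusion of that theorem: the connected sum $\#^k$ collapses to a single summand, and the ``all $a_i$ even'' condition reduces to the condition that $a_1 = -p$ is even, i.e.\ that $p$ is even. This yields $L(p,1) \hookrightarrow \s^2 \times \s^2$ when $p$ is even and $L(p,1) \hookrightarrow \s^2 \widetilde{\times} \s^2$ when $p$ is odd, exactly matching the two cases of the corollary.

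Two minor bookkeeping points are worth flagging. First, the corollary phrases the surgery as ``$p$-surgery'' whereas Theorem \ref{lensembed} is set up for ``$-p/q$ surgery''; because both candidate ambient $4$-manifolds admit orientation-reversing self-diffeomorphisms, the corollary statement is insensitive to the overall sign convention and the parity of $a_1$ still agrees with that of $p$. Second, the hypothesis $a_i \leq -2$ in Theorem \ref{lensembed} requires $p \geq 2$; the edge case $p = 1$ gives $L(1,1) = \s^3$, which admits a trivial open book embedding via a standard $4$-ball inside either candidate ambient manifold, so this case is immediate.

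No substantive difficulty is expected: the content of the corollary is essentially the observation that the hypotheses of Theorem \ref{lensembed} collapse to a trivial form when $k = 1$, together with elementary parity bookkeeping. The main ingredient, namely the explicit construction of the planar open book of $L(p,q)$ via Sch\"onenberger's handle slides and its lift to sphere twists in $\#^k_b(\s^2 \times [0,1])$, is already carried out in the proof of Theorem \ref{lensembed}; the corollary simply reads off the $k=1$ special case.
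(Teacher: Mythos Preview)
Your proposal is correct and matches the paper's approach: the paper states the corollary immediately after Theorem \ref{lensembed} with no separate proof, treating it as the direct $k=1$ specialization that you spell out. Your handling of the sign convention and the $p=1$ edge case is reasonable extra bookkeeping that the paper leaves implicit.
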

  \begin{exmp}
	The lens space $L(pq-1,q)$, has a surgery diagram as given in Figure \ref{figS03}. It admits a planar open book decomposition $\ob(\Sigma_{0,3},\tau_a^p\circ \tau_b^{p+q-2}\circ \tau_c^{p-1})$. It admits a spun embedding in $\#^2(\s^2 \widetilde{\times}\s^2)$.
	
\end{exmp}

     \begin{figure}[htbp]

      	\centering
      	\def\svgwidth{9cm}
      	%% Creator: Inkscape 1.1.2 (0a00cf5339, 2022-02-04), www.inkscape.org
%% PDF/EPS/PS + LaTeX output extension by Johan Engelen, 2010
%% Accompanies image file '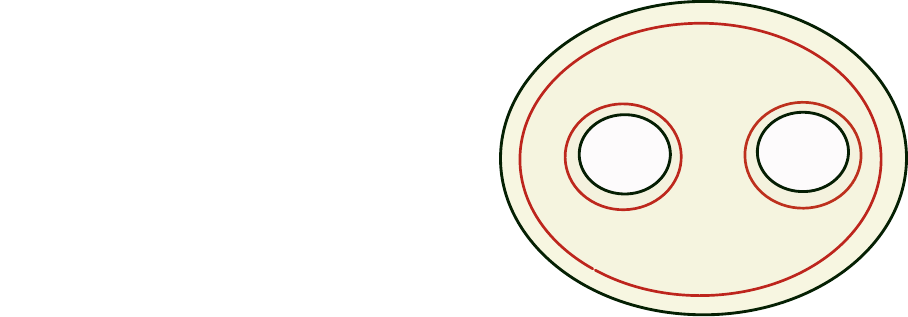' (pdf, eps, ps)
%%
%% To include the image in your LaTeX document, write
%%   \input{<filename>.pdf_tex}
%%  instead of
%%   \includegraphics{<filename>.pdf}
%% To scale the image, write
%%   \def\svgwidth{<desired width>}
%%   \input{<filename>.pdf_tex}
%%  instead of
%%   \includegraphics[width=<desired width>]{<filename>.pdf}
%%
%% Images with a different path to the parent latex file can
%% be accessed with the `import' package (which may need to be
%% installed) using
%%   \usepackage{import}
%% in the preamble, and then including the image with
%%   \import{<path to file>}{<filename>.pdf_tex}
%% Alternatively, one can specify
%%   \graphicspath{{<path to file>/}}
%% 
%% For more information, please see info/svg-inkscape on CTAN:
%%   http://tug.ctan.org/tex-archive/info/svg-inkscape
%%
\begingroup%
  \makeatletter%
  \providecommand\color[2][]{%
    \errmessage{(Inkscape) Color is used for the text in Inkscape, but the package 'color.sty' is not loaded}%
    \renewcommand\color[2][]{}%
  }%
  \providecommand\transparent[1]{%
    \errmessage{(Inkscape) Transparency is used (non-zero) for the text in Inkscape, but the package 'transparent.sty' is not loaded}%
    \renewcommand\transparent[1]{}%
  }%
  \providecommand\rotatebox[2]{#2}%
  \newcommand*\fsize{\dimexpr\f@size pt\relax}%
  \newcommand*\lineheight[1]{\fontsize{\fsize}{#1\fsize}\selectfont}%
  \ifx\svgwidth\undefined%
    \setlength{\unitlength}{435.81353456bp}%
    \ifx\svgscale\undefined%
      \relax%
    \else%
      \setlength{\unitlength}{\unitlength * \real{\svgscale}}%
    \fi%
  \else%
    \setlength{\unitlength}{\svgwidth}%
  \fi%
  \global\let\svgwidth\undefined%
  \global\let\svgscale\undefined%
  \makeatother%
  \begin{picture}(1,0.34828805)%
    \lineheight{1}%
    \setlength\tabcolsep{0pt}%
    \put(0,0){\includegraphics[width=\unitlength,page=1]{S03.pdf}}%
    \put(0.6684081,0.09138063){\color[rgb]{0,0,0}\makebox(0,0)[lt]{\lineheight{1.25}\smash{\begin{tabular}[t]{l}$a$\end{tabular}}}}%
    \put(0.86488022,0.09464401){\color[rgb]{0,0,0}\makebox(0,0)[lt]{\lineheight{1.25}\smash{\begin{tabular}[t]{l}$c$\end{tabular}}}}%
    \put(0.77848834,0.27739149){\color[rgb]{0,0,0}\makebox(0,0)[lt]{\lineheight{1.25}\smash{\begin{tabular}[t]{l}$b$\end{tabular}}}}%
    \put(0,0){\includegraphics[width=\unitlength,page=2]{S03.pdf}}%
    \put(-0.00173329,0.1635612){\color[rgb]{0,0,0}\makebox(0,0)[lt]{\lineheight{1.25}\smash{\begin{tabular}[t]{l}$-p$\end{tabular}}}}%
    \put(0.39481498,0.16450243){\color[rgb]{0,0,0}\makebox(0,0)[lt]{\lineheight{1.25}\smash{\begin{tabular}[t]{l}$-q$\end{tabular}}}}%
  \end{picture}%
\endgroup%

      	\caption{}
      	\label{figS03}
      	
      \end{figure}

    \subsection{Small Seifert fibered spaces}
    
    The methods used in Theorem \ref{lensembed} can also be used to produce spun embedding of small Seifert fibered spaces. A small Seifert manifold $M(e_0;r_1,r_2,r_3)$ ($e_0 \in \Z$ and $r_1,r_2,r_3 \in (0,1) \cap \Q$) has the following surgery diagram as shown in Figure ***.
    
    \begin{exmp}
    The following example is taken from \cite{GLS}. Let $M_p = M(-1;\frac{1}{2}, \frac{1}{2},\frac{1}{p})$ ($p \geq 2)$. As one can see from the surgery description of $M_p$ (see Figure \ref{figMp}) that it has an open book given by $\ob(\Sigma_{0,6},\tau_{a_1} \circ\tau_{a_2}\circ \tau_{a_3}^2\circ \tau_{a_4}^2 \circ \tau_{a_5}^{p}\circ \tau_b)$. We properly embed $\Sigma_{0,6}$ in $M_5 = \#^5(\s^2 \times [0,1])$. As before, we index the $2$-spheres by $1$ to $5$ and let $\sigma_i$ denote the twist along the $i$th sphere and let $\sigma_{S_{i_1\dots i_k}}$ denote the twist along an embedded sphere homologous to the sum of the spheres indexed by $i_1,i_2,\dots,i_k$. Thus, $M_p$ spun embeds in $\ob(M_5, \sigma_1 \circ \sigma_2 \circ \sigma_3^2 \circ \sigma_4^2 \circ \sigma_5^{p}\circ \sigma_{12345}) = \ob(M_5, \sigma_1^2 \circ \sigma_2^2 \circ \sigma_3^3 \circ \sigma_4^3 \circ \sigma_5^{p+1}) = \#^5 \s^2 \widetilde{\times} \s^2$.  
    
    \begin{figure}[htbp] 
    	
    	\centering
    	\def\svgwidth{14cm}
    	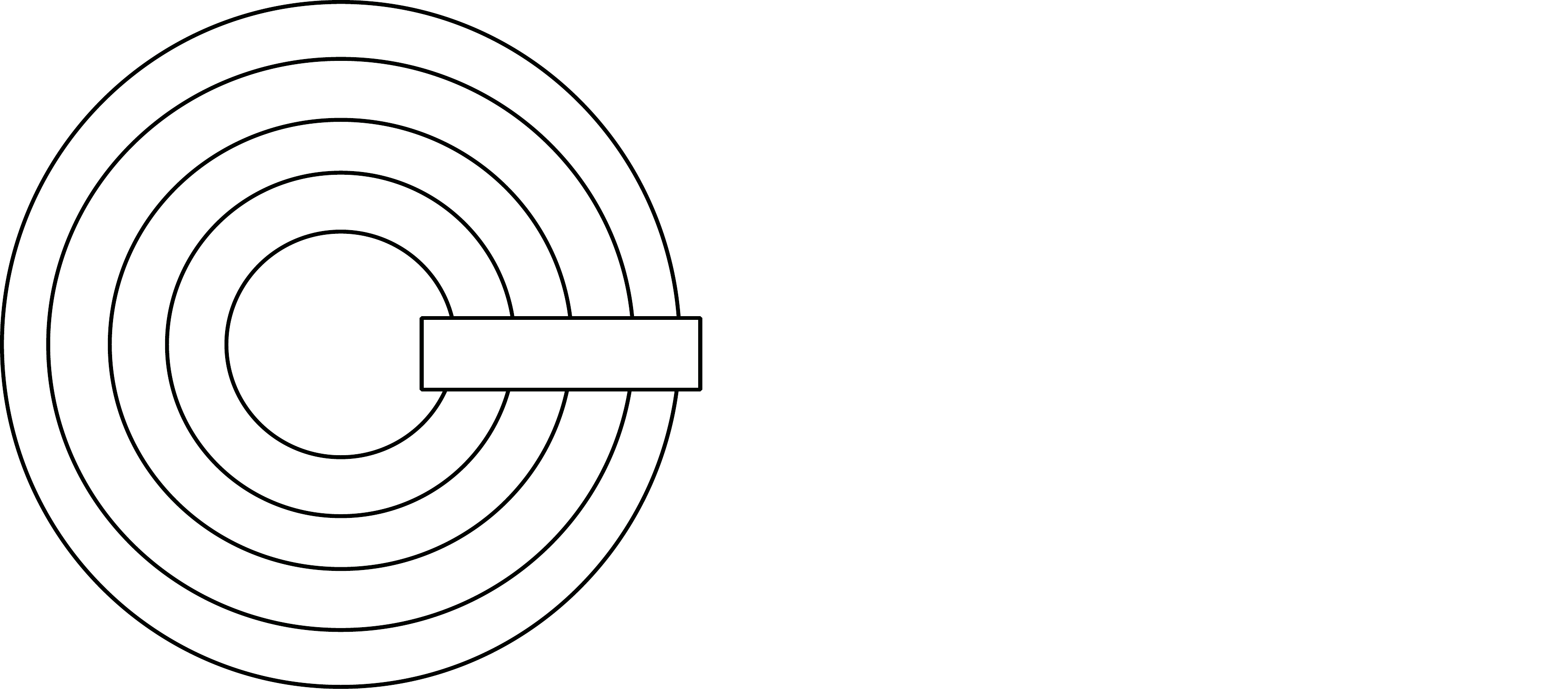
    	\caption{Planar open book for $M_p$ from braided surgery diagram.}
    	\label{figMp}
    	
    \end{figure}
    
    \end{exmp}

    \begin{exmp}
    	
    	Figure \ref{figtripodplumb} describes the plumbing diagram for the Seifert fibered space $M(-3;-\frac{3}{2},-\frac{5}{3},-\frac{5}{3})$. Using surgery moves we obtain a planar open book for this manifold $\ob(\Sigma_{0,6}, \phi)$, where $\phi = \tau_{a_1} \circ \tau^{-2}_{a_2} \circ \tau^{-1}_{a_3}\circ \tau^3_{a_4} \circ \tau_{a_5}\circ \tau^2_{a_6} \circ \tau^3_{b_1} \circ \tau^2_{b_4}\circ \tau^{-2}_{b_3} \circ \tau_{b_2}$ (see Figure \ref{figex}). We can spun embed this in $(\#^2\s^2 \times \s^2) \# (\#^4 \s^2 \widetilde{\times} \s^2)$.
    	
    	 \begin{figure}[htbp] 
    	 	\centering
    		\def\svgwidth{8cm}
    		%% Creator: Inkscape 1.3.2 (091e20e, 2023-11-25), www.inkscape.org
%% PDF/EPS/PS + LaTeX output extension by Johan Engelen, 2010
%% Accompanies image file '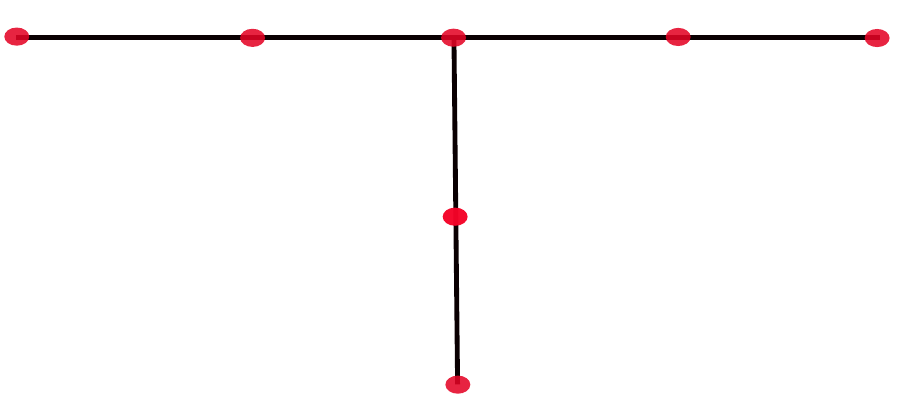' (pdf, eps, ps)
%%
%% To include the image in your LaTeX document, write
%%   \input{<filename>.pdf_tex}
%%  instead of
%%   \includegraphics{<filename>.pdf}
%% To scale the image, write
%%   \def\svgwidth{<desired width>}
%%   \input{<filename>.pdf_tex}
%%  instead of
%%   \includegraphics[width=<desired width>]{<filename>.pdf}
%%
%% Images with a different path to the parent latex file can
%% be accessed with the `import' package (which may need to be
%% installed) using
%%   \usepackage{import}
%% in the preamble, and then including the image with
%%   \import{<path to file>}{<filename>.pdf_tex}
%% Alternatively, one can specify
%%   \graphicspath{{<path to file>/}}
%% 
%% For more information, please see info/svg-inkscape on CTAN:
%%   http://tug.ctan.org/tex-archive/info/svg-inkscape
%%
\begingroup%
  \makeatletter%
  \providecommand\color[2][]{%
    \errmessage{(Inkscape) Color is used for the text in Inkscape, but the package 'color.sty' is not loaded}%
    \renewcommand\color[2][]{}%
  }%
  \providecommand\transparent[1]{%
    \errmessage{(Inkscape) Transparency is used (non-zero) for the text in Inkscape, but the package 'transparent.sty' is not loaded}%
    \renewcommand\transparent[1]{}%
  }%
  \providecommand\rotatebox[2]{#2}%
  \newcommand*\fsize{\dimexpr\f@size pt\relax}%
  \newcommand*\lineheight[1]{\fontsize{\fsize}{#1\fsize}\selectfont}%
  \ifx\svgwidth\undefined%
    \setlength{\unitlength}{430.50329013bp}%
    \ifx\svgscale\undefined%
      \relax%
    \else%
      \setlength{\unitlength}{\unitlength * \real{\svgscale}}%
    \fi%
  \else%
    \setlength{\unitlength}{\svgwidth}%
  \fi%
  \global\let\svgwidth\undefined%
  \global\let\svgscale\undefined%
  \makeatother%
  \begin{picture}(1,0.43897931)%
    \lineheight{1}%
    \setlength\tabcolsep{0pt}%
    \put(0,0){\includegraphics[width=\unitlength,page=1]{tripodplumb.pdf}}%
    \put(-0.00085793,0.42216738){\color[rgb]{0.03921569,0,0.00784314}\makebox(0,0)[lt]{\lineheight{1.25}\smash{\begin{tabular}[t]{l}$-3$\end{tabular}}}}%
    \put(0.26087272,0.42204104){\color[rgb]{0.03921569,0,0.00784314}\makebox(0,0)[lt]{\lineheight{1.25}\smash{\begin{tabular}[t]{l}$-2$\end{tabular}}}}%
    \put(0.47676151,0.42204104){\color[rgb]{0.03921569,0,0.00784314}\makebox(0,0)[lt]{\lineheight{1.25}\smash{\begin{tabular}[t]{l}$-3$\end{tabular}}}}%
    \put(0.73011858,0.42204104){\color[rgb]{0.03921569,0,0.00784314}\makebox(0,0)[lt]{\lineheight{1.25}\smash{\begin{tabular}[t]{l}$-2$\end{tabular}}}}%
    \put(0.95135988,0.41847255){\color[rgb]{0.03921569,0,0.00784314}\makebox(0,0)[lt]{\lineheight{1.25}\smash{\begin{tabular}[t]{l}$-3$\end{tabular}}}}%
    \put(0.53028763,0.19366286){\color[rgb]{0.03921569,0,0.00784314}\makebox(0,0)[lt]{\lineheight{1.25}\smash{\begin{tabular}[t]{l}$-2$\end{tabular}}}}%
    \put(0.53207185,0.00632131){\color[rgb]{0.03921569,0,0.00784314}\makebox(0,0)[lt]{\lineheight{1.25}\smash{\begin{tabular}[t]{l}$-2$\end{tabular}}}}%
  \end{picture}%
\endgroup%

    		\caption{The plumbing diagram for $M(-3;-\frac{3}{2},-\frac{5}{3},-\frac{5}{3})$.}
    		\label{figtripodplumb}
    		
    	\end{figure}

    	\begin{figure}[htbp] 
    		
    		\centering
    		\def\svgwidth{15cm}
    		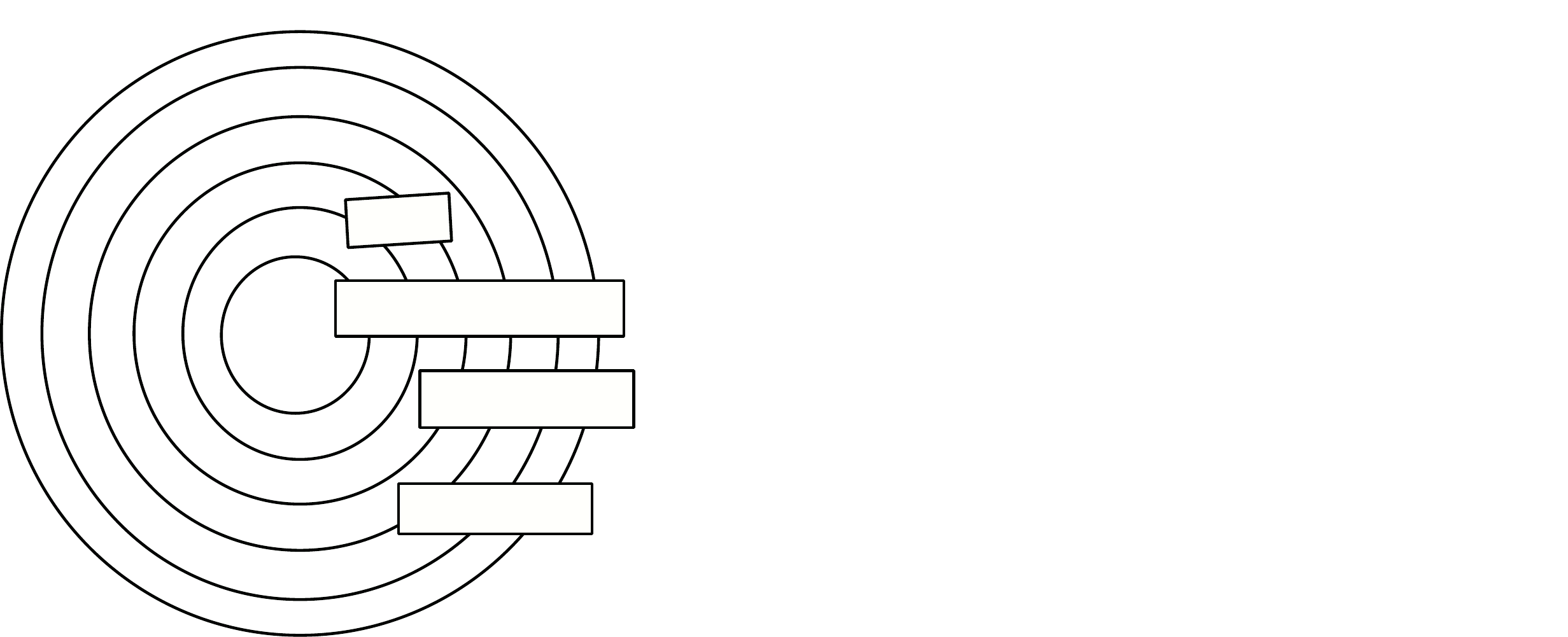
    		\caption{Planar open book for $M(-3;-\frac{3}{2},-\frac{5}{3},-\frac{5}{3})$ from braided surgery diagram.}
    		\label{figex}
    		
    	\end{figure}

    \end{exmp}

    \subsection{Poincaré homology sphere} The famous Poincaré homology sphere $\Sigma(2,3,5)$ has a planar open book decomposition $\ob(\Sigma_{0,3}, \tau_{\alpha}^{-1}\circ\tau_{\beta}^{-1}\circ\tau_{\alpha}\circ\tau_{\beta}\circ\tau_{\delta_1}^{-1} \circ \tau_{\delta_2} \circ \tau_{\delta_3}^{-1})$(see Figure \ref{phsphere}). Thus, $\Sigma(2,3,5)$ spun embeds in $\ob(M_3,\sigma_1 \circ \sigma_2 \circ \sigma_3) = \#^3 \s^2 \widetilde{\times}\s^2 = \s^2 \times \s^2 \# \s^2 \times \s^2 \# \s^2 \widetilde{\times} \s^2$. 
    
    \begin{figure}[htbp] 
    	
    	\centering
    	\def\svgwidth{7cm}
    	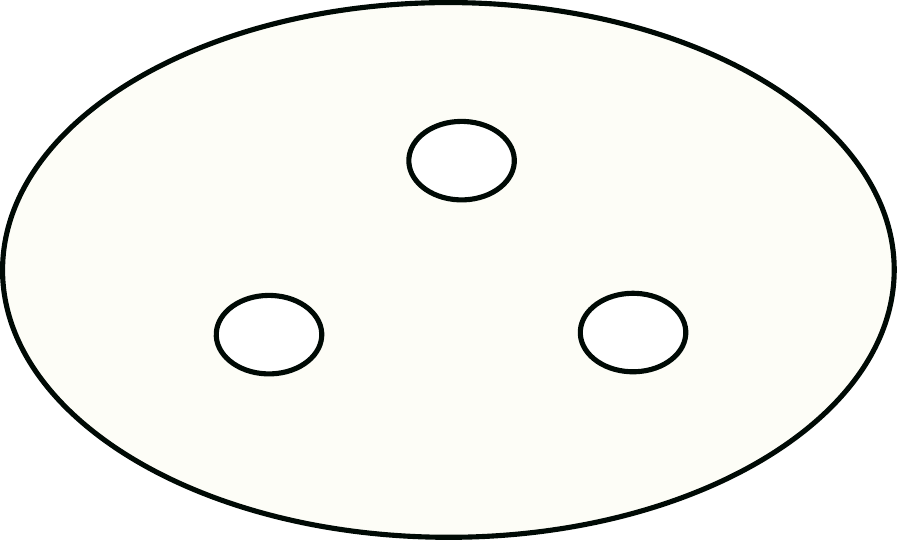
    	\caption{A planar open book for $\Sigma(2,3,5)$}
    	\label{phsphere}
    	
    \end{figure}
    
    \noindent It was shown by Aceto, Golla and Larson \cite{AGL} that  $\Sigma(2,3,5)$ embeds in $\#^8(\s^2 \times \s^2)$ and $\Sigma(2,3,5)$ cannot be embedded in $\#^k(\s^2 \times \s^2)$ for $k \leq 7$. We show below that $\Sigma(2,3,5)$ actually spun embeds in $\#^8(\s^2 \times \s^2)$.

       \begin{figure}[!htb]\label{}
	       \centering
	       \def\svgwidth{10cm}
           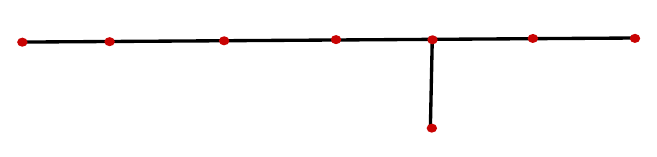
	       \vspace{-0.1cm}
	       \caption{ }
	        \label{ph0}
         \end{figure} 
      Figure \ref{ph0} gives the plumbing diagram for $\Sigma(2,3,5)$. Using the recipe given in Sch{\"o}nenberger (See proof of theorem $3.2.1$ in \cite{Sch}) we obtain surgery diagram of $\Sigma(2,3,5)$ shown on the left of Figure \ref{ph1}.
      
      \begin{figure}[!htb]\label{}
      	\centering
      	\def\svgwidth{12cm}
      	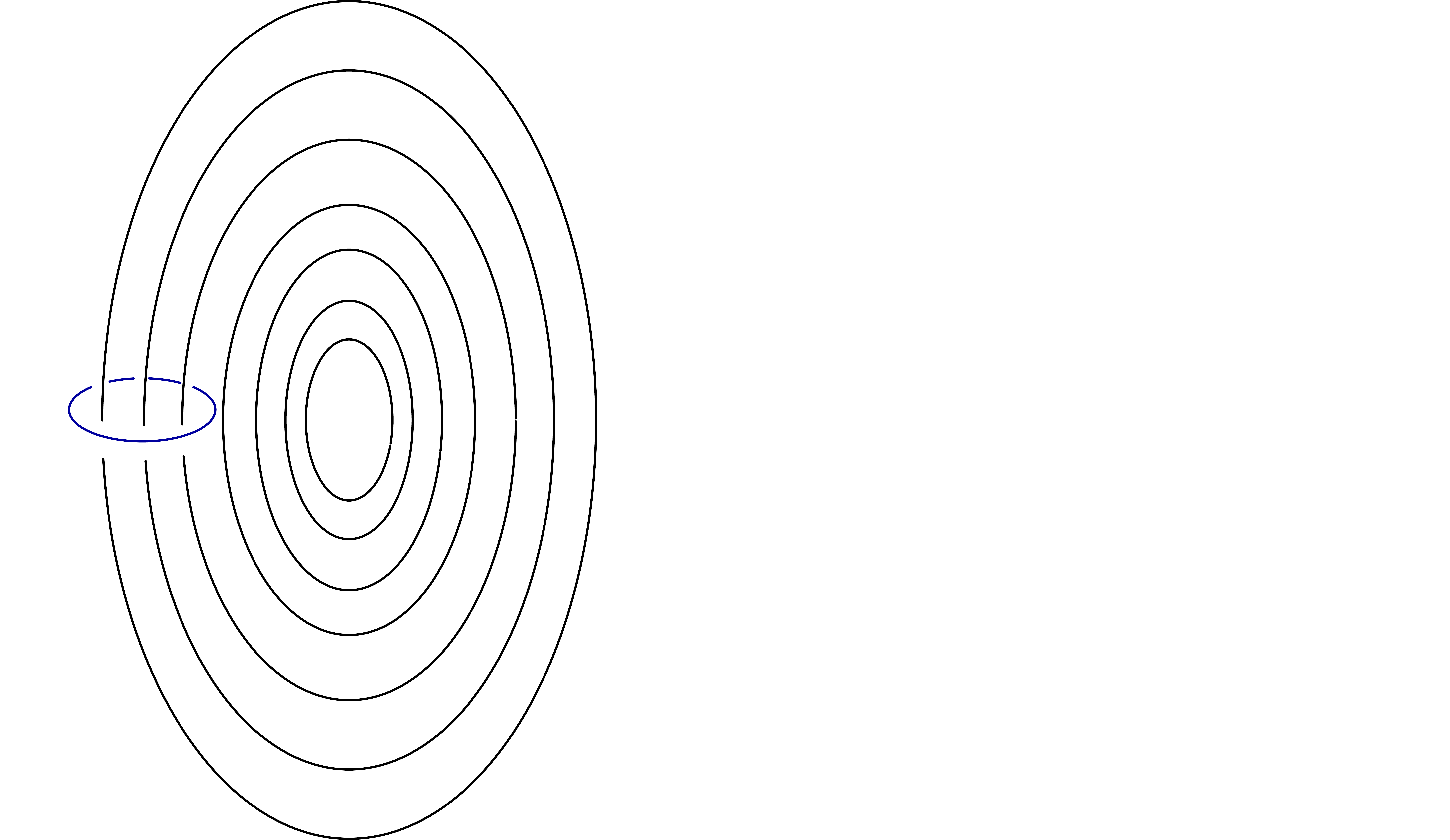
      	\vspace{-0.1cm}
      	\caption{ }
      	\label{ph1}
      \end{figure} 
      
      We perform a blow up operation to remove each linking. See diagram on right of the Figure \ref{ph1}. Thus surgery coefficients of all unknots changes from $-2$ to $-1$. Now slide the  blue unknot labelled $-2$ along the outermost unknot labelled  with $-1$, as on the right of the Figure \ref{ph1}.Thus blue unknot has new surgery coefficient $-1.$ After doing isotopy we get a link  shown on left of the Figure \ref{ph*}. Using surgery moves we obtain a planar open book for $\Sigma(2,3,5)$ $\ob(\Sigma_{0,8}, \phi)$,  where $\phi= \tau_{a_1} \circ \tau_{a_2} \circ \tau_{a_3} \circ \tau_{a_4} \circ \tau_{a_5}^2 \circ \tau_{a_6}^3 \circ \tau_{a_7} \circ \tau_{a_8} \circ \tau_{b_1} \circ \tau_{b_2}^{-1} \circ \tau_{b_3}^{-1}$ (Left on the Figure\ref{ph*}). Thus $\Sigma(2,3,5)$ spun embeds in $\ob(M_8,\sigma_{a_1} \circ \sigma_{a_2} \circ \sigma_{a_3} \circ \sigma_{a_4} \circ \sigma_{a_5}^2 \circ \sigma_{a_6}^3 \circ \sigma_{a_7} \circ \sigma_{a_8} \circ \sigma_{b_1} \circ \sigma_{b_2} \circ \sigma_{b_3} )=\ob(M_8, \sigma_{a_1}^2 \circ \sigma_{a_2}^2 \circ \sigma_{a_3}^2 \circ \sigma_{a_4}^2 \circ \sigma_{a_5}^4 \circ \sigma_{a_6}^6 \circ \sigma_{a_7} ^4\circ \sigma_{a_8}^2) =\#^8 \s^2 \times \s^2.$

  \begin{figure}[!htb]\label{}
  	\centering
  	\def\svgwidth{13cm}
  	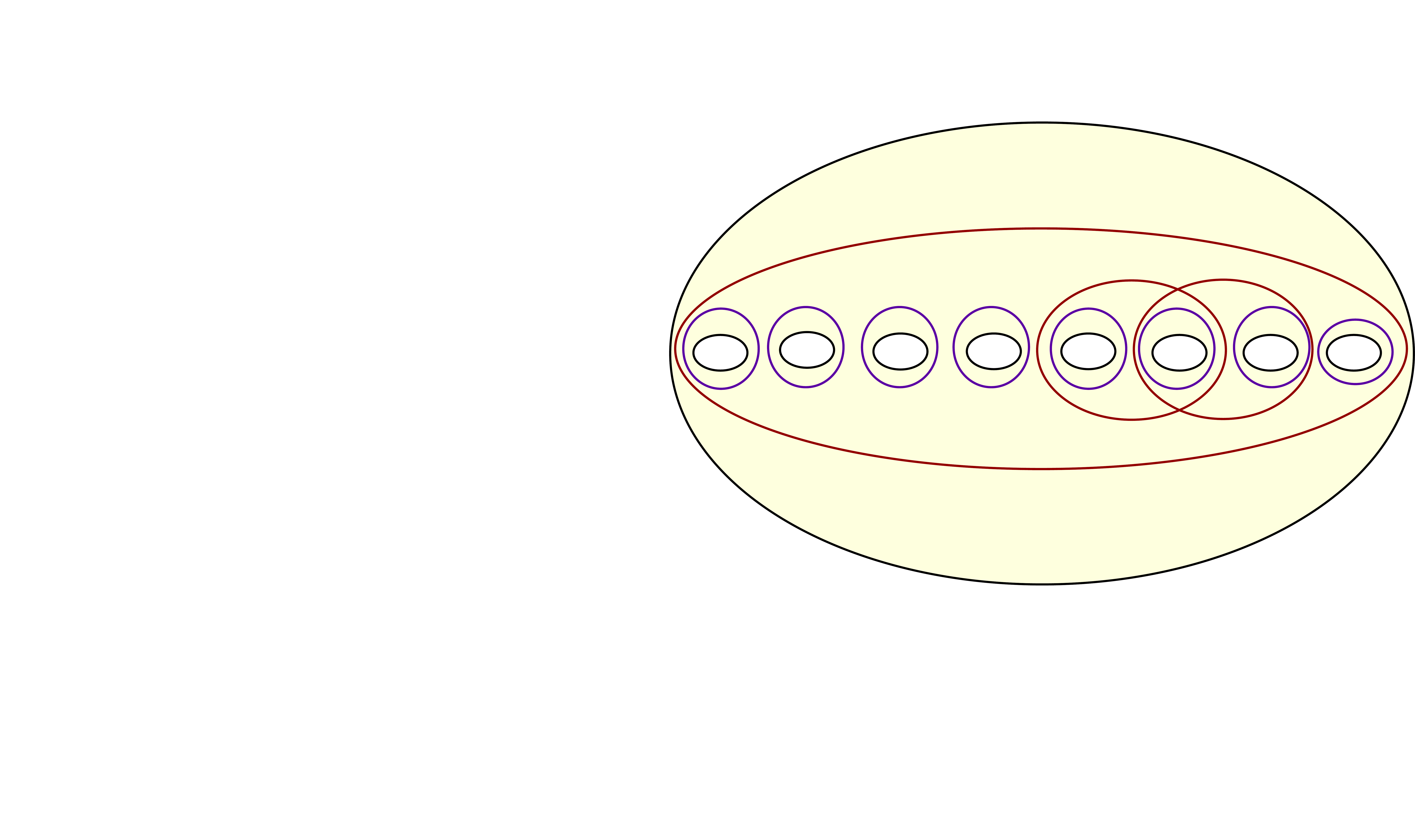
  	\vspace{-0.1cm}
  	\caption{ }
  	\label{ph*}
  \end{figure} 

    \subsection{Codimension--1 spun embedding in $\s^2 \times \s^2$}
    It is well-known that every closed oriented $3$-manifold embeds in $\#^n(\s^2 \times \s^2)$ for some $n$. One can use spun embedding to produce examples of $3$-manifolds that embeds in connected sums of $\s^2 \times \s^2$.

     \begin{exmp}\label{exmplek}
    	
    	Consider an open book with page $\Sigma_{0,3}$ and monodromy $h=\tau_a^{i_a}\tau_b^{i_b}\tau_c^{i_c}\tau_d^{i_d}\tau_e^{i_e}\tau_f^{i_f}\tau_g^{i_g}$ (see Figure \ref{figlek}) such that $i_a+i_d+i_e+i_g = i_b+i_d+i_e+i_f = i_c+i_d+i_f+i_g = 0 \pmod 2$. It follows from the proof of Theorem \ref{thm0} that $\ob(\Sigma_{0,3},h)$ spun embeds in $\ob(M_3, \sigma_a^{i_a+i_d+i_e+i_g}\circ\sigma_b^{i_b+i_d+i_e+i_f}\circ\sigma_c^{i_c+i_d+i_f+i_g}) = \#^3 (\s^2 \times \s^2)$.
    	
    	\begin{figure}[htbp] 
    		
    		\centering
    		\def\svgwidth{7cm}
    		%% Creator: Inkscape 1.1.2 (0a00cf5339, 2022-02-04), www.inkscape.org
%% PDF/EPS/PS + LaTeX output extension by Johan Engelen, 2010
%% Accompanies image file '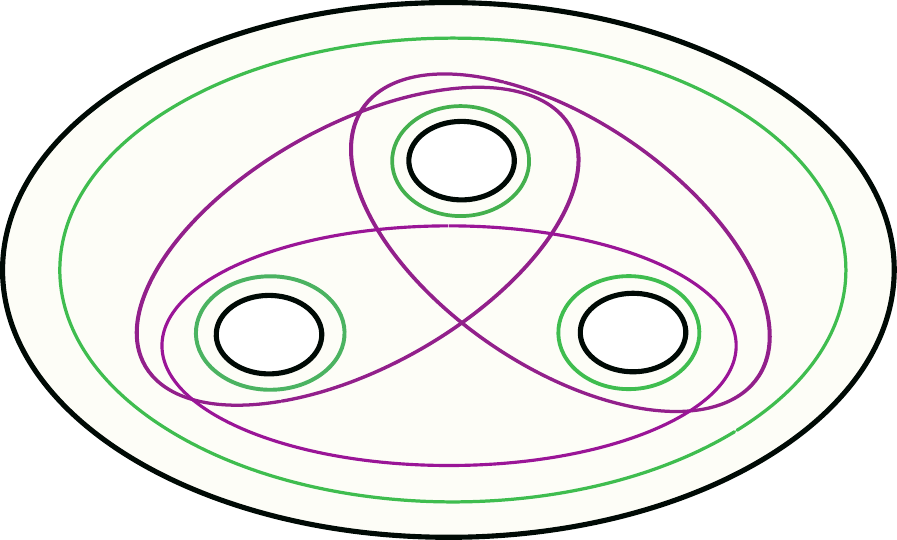' (pdf, eps, ps)
%%
%% To include the image in your LaTeX document, write
%%   \input{<filename>.pdf_tex}
%%  instead of
%%   \includegraphics{<filename>.pdf}
%% To scale the image, write
%%   \def\svgwidth{<desired width>}
%%   \input{<filename>.pdf_tex}
%%  instead of
%%   \includegraphics[width=<desired width>]{<filename>.pdf}
%%
%% Images with a different path to the parent latex file can
%% be accessed with the `import' package (which may need to be
%% installed) using
%%   \usepackage{import}
%% in the preamble, and then including the image with
%%   \import{<path to file>}{<filename>.pdf_tex}
%% Alternatively, one can specify
%%   \graphicspath{{<path to file>/}}
%% 
%% For more information, please see info/svg-inkscape on CTAN:
%%   http://tug.ctan.org/tex-archive/info/svg-inkscape
%%
\begingroup%
  \makeatletter%
  \providecommand\color[2][]{%
    \errmessage{(Inkscape) Color is used for the text in Inkscape, but the package 'color.sty' is not loaded}%
    \renewcommand\color[2][]{}%
  }%
  \providecommand\transparent[1]{%
    \errmessage{(Inkscape) Transparency is used (non-zero) for the text in Inkscape, but the package 'transparent.sty' is not loaded}%
    \renewcommand\transparent[1]{}%
  }%
  \providecommand\rotatebox[2]{#2}%
  \newcommand*\fsize{\dimexpr\f@size pt\relax}%
  \newcommand*\lineheight[1]{\fontsize{\fsize}{#1\fsize}\selectfont}%
  \ifx\svgwidth\undefined%
    \setlength{\unitlength}{430.46050983bp}%
    \ifx\svgscale\undefined%
      \relax%
    \else%
      \setlength{\unitlength}{\unitlength * \real{\svgscale}}%
    \fi%
  \else%
    \setlength{\unitlength}{\svgwidth}%
  \fi%
  \global\let\svgwidth\undefined%
  \global\let\svgscale\undefined%
  \makeatother%
  \begin{picture}(1,0.60199895)%
    \lineheight{1}%
    \setlength\tabcolsep{0pt}%
    \put(0,0){\includegraphics[width=\unitlength,page=1]{lek.pdf}}%
    \put(0.41738784,0.42094416){\color[rgb]{0,0,0}\makebox(0,0)[lt]{\lineheight{1.25}\smash{\begin{tabular}[t]{l}$a$\end{tabular}}}}%
    \put(0.36415383,0.22534476){\color[rgb]{0,0,0}\makebox(0,0)[lt]{\lineheight{1.25}\smash{\begin{tabular}[t]{l}$b$\end{tabular}}}}%
    \put(0.60669076,0.22437704){\color[rgb]{0,0,0}\makebox(0,0)[lt]{\lineheight{1.25}\smash{\begin{tabular}[t]{l}$c$\end{tabular}}}}%
    \put(0.47309383,0.56009566){\color[rgb]{0,0,0}\makebox(0,0)[lt]{\lineheight{1.25}\smash{\begin{tabular}[t]{l}$d$\end{tabular}}}}%
    \put(0.22588714,0.39201493){\color[rgb]{0,0,0}\makebox(0,0)[lt]{\lineheight{1.25}\smash{\begin{tabular}[t]{l}$e$\end{tabular}}}}%
    \put(0.48917397,0.08782971){\color[rgb]{0,0,0}\makebox(0,0)[lt]{\lineheight{1.25}\smash{\begin{tabular}[t]{l}$f$\end{tabular}}}}%
    \put(0.77505533,0.3791697){\color[rgb]{0,0,0}\makebox(0,0)[lt]{\lineheight{1.25}\smash{\begin{tabular}[t]{l}$g$\end{tabular}}}}%
  \end{picture}%
\endgroup%

    		\caption{}
    		\label{figlek}
    		
    	\end{figure}

    \end{exmp}
    
    In general, let $M^3 = \ob(\Sigma_{0,n+1}, \phi_M)$ be a planar open book. Let $\phi_M = \prod_{\gamma_1,\dots,\gamma_m \in \mathcal{C}} \tau^{\alpha_1}_{\gamma_1} \circ \cdots \circ \tau^{\alpha_m}_{\gamma_m}$, for some integers $m, \alpha_1,\dots \alpha_m$. Let $\beta_1, \beta_2,\dots, \beta_n$ be the boundary parallel simple closed curves (in the interior). If $[\gamma_i] = \sum_{j=1}^n c_{ij} [\beta_j]$ and $n_j = \sum_{i=1}^{m} c_{ij} \alpha_i = 0 \pmod 2$ for all $j \in \{1,2,\dots,m\}$ then, $\ob(\Sigma_{0,n+1}, \phi_M)$ spun embeds in $\#^n(\s^2 \times \s^2)$.

    \subsection{Codimension--1 spun embedding in $\s^4$}  Consider a proper embedding of $\Sigma_{0,3}$ in $H_{1,1} = \s^1 \times \D^2 \#_b \s^2 \times [0,1]$ as described in Figure \ref{FIGST}. Then, the push map on $H_{1,1}$ induces the push map on the embedded $\Sigma_{0,3}$. If $a$ and $b$ denote the boundary parallel curves as in Figure \ref{FIGST} then the induced push map is given by the mapping class $\tau_a\circ \tau_b^{-1}$.

	\begin{figure}[htbp] 
		
		\centering
		\def\svgwidth{12cm}
		%% Creator: Inkscape inkscape 0.92.5, www.inkscape.org
%% PDF/EPS/PS + LaTeX output extension by Johan Engelen, 2010
%% Accompanies image file '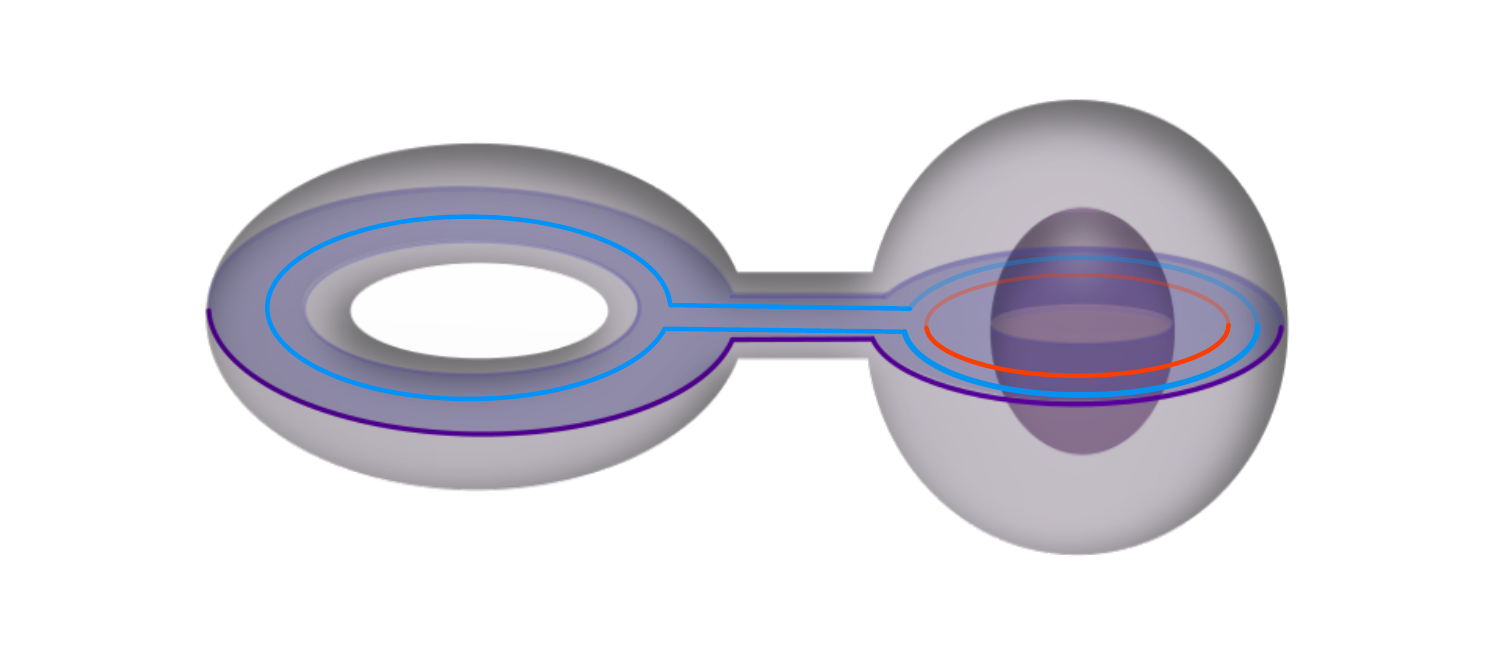' (pdf, eps, ps)
%%
%% To include the image in your LaTeX document, write
%%   \input{<filename>.pdf_tex}
%%  instead of
%%   \includegraphics{<filename>.pdf}
%% To scale the image, write
%%   \def\svgwidth{<desired width>}
%%   \input{<filename>.pdf_tex}
%%  instead of
%%   \includegraphics[width=<desired width>]{<filename>.pdf}
%%
%% Images with a different path to the parent latex file can
%% be accessed with the `import' package (which may need to be
%% installed) using
%%   \usepackage{import}
%% in the preamble, and then including the image with
%%   \import{<path to file>}{<filename>.pdf_tex}
%% Alternatively, one can specify
%%   \graphicspath{{<path to file>/}}
%% 
%% For more information, please see info/svg-inkscape on CTAN:
%%   http://tug.ctan.org/tex-archive/info/svg-inkscape
%%
\begingroup%
  \makeatletter%
  \providecommand\color[2][]{%
    \errmessage{(Inkscape) Color is used for the text in Inkscape, but the package 'color.sty' is not loaded}%
    \renewcommand\color[2][]{}%
  }%
  \providecommand\transparent[1]{%
    \errmessage{(Inkscape) Transparency is used (non-zero) for the text in Inkscape, but the package 'transparent.sty' is not loaded}%
    \renewcommand\transparent[1]{}%
  }%
  \providecommand\rotatebox[2]{#2}%
  \newcommand*\fsize{\dimexpr\f@size pt\relax}%
  \newcommand*\lineheight[1]{\fontsize{\fsize}{#1\fsize}\selectfont}%
  \ifx\svgwidth\undefined%
    \setlength{\unitlength}{714.33073165bp}%
    \ifx\svgscale\undefined%
      \relax%
    \else%
      \setlength{\unitlength}{\unitlength * \real{\svgscale}}%
    \fi%
  \else%
    \setlength{\unitlength}{\svgwidth}%
  \fi%
  \global\let\svgwidth\undefined%
  \global\let\svgscale\undefined%
  \makeatother%
  \begin{picture}(1,0.44103136)%
    \lineheight{1}%
    \setlength\tabcolsep{0pt}%
    \put(0,0){\includegraphics[width=\unitlength,page=1]{FIGST.pdf}}%
    \put(0.65714615,0.20057292){\color[rgb]{0,0,0}\makebox(0,0)[lt]{\lineheight{1.25}\smash{\begin{tabular}[t]{l}$c$\end{tabular}}}}%
    \put(0.45075832,0.19947762){\color[rgb]{0,0,0}\makebox(0,0)[lt]{\lineheight{1.25}\smash{\begin{tabular}[t]{l}$b$\end{tabular}}}}%
    \put(0,0){\includegraphics[width=\unitlength,page=2]{FIGST.pdf}}%
    \put(0.36282049,0.26120626){\color[rgb]{0,0,0}\makebox(0,0)[lt]{\lineheight{1.25}\smash{\begin{tabular}[t]{l}$a$\end{tabular}}}}%
  \end{picture}%
\endgroup%

		\caption{}
		\label{FIGST}
		
	\end{figure}

    \begin{exmp}\label{spunembeds4}
    	
    	Figure \ref{figembeds4} gives a family of planar open books of $\s^3$ : $\ob(\Sigma_{0,3}, \tau_c^{-2k-2} \circ \tau_a^{-1}\circ \tau_b)$, $k\geq0$. We embed $\Sigma_{0,3}$ in $H_{1,1}$ as described in Figure \ref{FIGST}. Consider the induced product embedding of $\Sigma_{0,3} \times [0,1] \rightarrow H_{1,1} \times [0,1]$. Let $\sigma_c$ denote the twist along the boundary $2$-sphere of $H_{1,1}$ and $\rho_{b,a}$ denote the push map that pushes the boundary sphere in $H_{1,1}$ along the curve $a$. We then apply $\sigma^{-2k-2}_c$ at level $H_{1,1} \times \{\frac{1}{3}\}$ and $\rho_{b,a}$ at level $H_{1,1} \times \{\frac{2}{3}\}$. This will give a spun embedding of $\ob(\Sigma_{0,3}, \tau_c^{2k+2} \circ \tau_a\circ \tau^{-1}_b)$ in $\ob(H_{1,1}, \sigma_c^{2k+2}\circ \rho_{b,a}) \cong \ob(H_{1,1},\rho_{b,a}) = \s^4$.

    	  \begin{figure}[htbp] 
    		
    		\centering
    		\def\svgwidth{12cm}
    		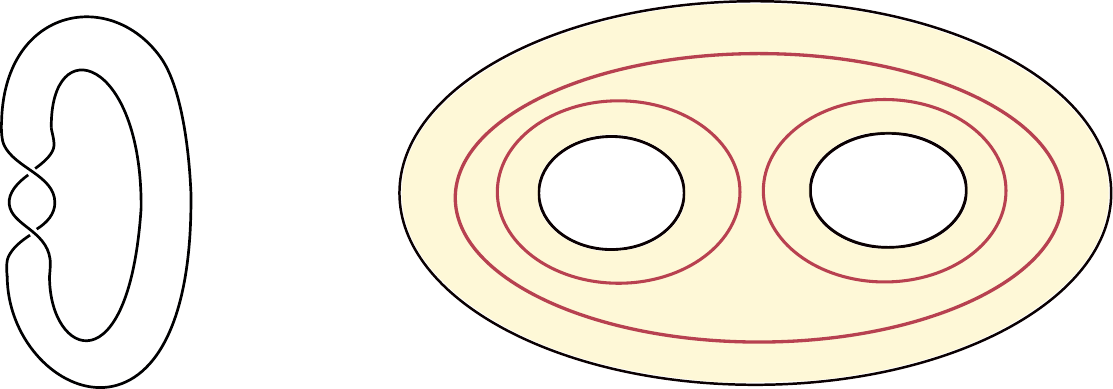
    		\caption{A family of planar open books for $\s^3$.}
    		\label{figembeds4}
    		
    	\end{figure}

    	\noindent In fact for any integer $n$, $\ob(\Sigma_{0,3}, \tau_c^{n+1} \circ \tau_a\circ \tau^{-1}_b)$ embeds in $\ob(H_{1,1}, \sigma_c^{n+1}\circ \rho_{b,a}).$ By \cite{hsueh1}, $\ob(H_{1,1}, \sigma_c^{n+1}\circ \rho_{b,a})=\s^4$.
    \end{exmp}
    
    \noindent A general case can be thought of in the following way. Let $M^3 = \ob(\Sigma_{0,2n+1}, \phi_M)$ be a planar open book obtained from a braided surgery diagram. Let $a_1,b_1,a_2,b_2,\dots,a_n,b_n$ be the inner bondary parallel curves in $\Sigma_{0,2n+1}$. We can think of $\Sigma_{0,2n+1}$ as the boundary connected sum $n$ copies of $\Sigma_{0,3}$, where the $i$th copy has inner boundary parallel curves $a_i,b_i$. We can embed each copy of $\Sigma_{0,3}$ in a $H_{1,1}$ as shown in Figure \ref{FIGST} and taking boundary connected sums will give a proper embedding of $\Sigma_{0,2n+1}$ in $\#_b^{2n} H_{1,1}$. Let $\phi_M = \prod_{\gamma_1,\dots,\gamma_m \in \mathcal{C}} \tau^{\alpha_1}_{\gamma_1} \circ \cdots \circ \tau^{\alpha_m}_{\gamma_m} \circ \rho_{b_1,a_1}\circ \rho_{b_2,a_2}\circ \cdots \circ \rho_{b_n,a_n}$, for some integers $m, \alpha_1,\dots \alpha_m$ such that $[\gamma_i] = \sum_{j=1}^n c_{ij} [a_j]$ and $n_j = \sum_{i=1}^{m} c_{ij} \alpha_i = 1 \pmod 2$ for all $j \in \{1,2,\dots,m\}$ then, $\ob(\Sigma_{0,2n+1}, \phi_M)$ spun embeds in $\s^4$.

     \subsection{Codimension--1 spun embedding in $\s^5$}\label{spunembeds5}
     
      Let $N^4 = \ob(M, \id)$, where $M$ is a connected oriented $3$-manifold with non-empty boundary. Let $D(M)$ denote the double of $M$. Saeki has proved the following.
      
      \begin{theorem}(Theorem $6.1$ in \cite{Sk1})\label{thms5} Let $\widetilde{M}$ be a closed orientable connected $3$-manifold. There exists a simple open book decomposition of $\s^5$ with a spin page bounding $\widetilde{M}$. 
      	
      \end{theorem}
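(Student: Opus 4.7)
The plan is to proceed in two stages. Stage one is to construct a compact spin $4$-manifold $V^4$ with $\partial V = \widetilde{M}$. Every closed orientable $3$-manifold has $w_2 = 0$ and so admits a spin structure, and since the spin cobordism group $\Omega^{\mathrm{Spin}}_3$ vanishes, $\widetilde{M}$ bounds some spin $4$-manifold $V$. Stage two is to realize $V$ as a page of an open book decomposition of $\s^5$; this is where the genuine content lies.

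For stage two, I would start from the trivial open book $\ob(\D^4, \id) = \s^5$ and grow its page by positive stabilizations until the page becomes $V$. Choose a handle decomposition of $V$ with handles of index $\leq 2$, which is possible since $\partial V \neq \emptyset$. Then inductively attach the corresponding handles to the page of the open book while compensating by composing the monodromy with twist maps along suitable submanifolds (the higher-dimensional analogue of $3$-dimensional Dehn-twist stabilization, in the spirit of the sphere and push twists used earlier in this paper). Each such stabilization modifies the page by attaching a handle while preserving the total space, so after all handles of $V$ have been attached, the page will be $V$ (possibly connected-summed with finitely many copies of $\s^2 \times \s^2$, which is itself spin, so the spin hypothesis on the page is preserved).

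The main obstacle will be verifying that the open-book manifold remains diffeomorphic to $\s^5$ at each stabilization step. This reduces to a handle-calculus problem for $5$-manifolds, which is most naturally attacked via Kirby-type diagrams for open books, or more abstractly via the Smale--Barden classification of simply connected spin $5$-manifolds applied to both $\s^5$ and $\ob(V,\phi)$. Alternatively, one can recast the statement as: $\widetilde{M}$ embeds in $\s^5$ as a fibered codimension--$2$ submanifold with spin fiber $V$. From that viewpoint, one would start from an explicit presentation of $\widetilde{M}$ -- for instance the planar open book on $\widetilde{M}$ produced in section \ref{planarob} -- use Theorem \ref{thm0} to spun-embed $\widetilde{M}$ in a $4$-manifold $W_{i,j}$, and then leverage the fact that $W_{i,j}$ embeds in $\s^5$ (every closed spin $4$-manifold with zero signature does, and $\s^2\times\s^2$ and $\s^2\widetilde{\times}\s^2$ handle-cancel spinfully) to obtain the desired fibered codimension--$2$ embedding. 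Either route bypasses writing $\phi$ explicitly and reduces the proof to spin cobordism plus controlled $5$-dimensional handle calculus.
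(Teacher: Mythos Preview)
This theorem is not proved in the present paper; it is quoted from Saeki \cite{Sk1} and used as a black box in section \ref{spunembeds5}. There is therefore no proof here to compare your proposal against.

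As for the proposal itself, it is a plan rather than a proof, and it has concrete gaps. Your Stage 2 openly leaves the central difficulty unresolved (``the main obstacle will be verifying that the open-book manifold remains diffeomorphic to $\s^5$ at each stabilization step''), and neither of the two routes you sketch actually closes it. The route via Theorem \ref{thm0} breaks down in two places. First, a codimension--$1$ spun embedding of $\widetilde{M}$ into $W_{i,j}$ followed by some embedding of $W_{i,j}$ into $\s^5$ has no reason to yield a \emph{fibered} codimension--$2$ embedding of $\widetilde{M}$ in $\s^5$; you would need $W_{i,j}$ itself to sit as a page of an open book on $\s^5$ compatibly with the spun embedding, which is essentially the statement you are trying to prove. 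Second, your parenthetical claim that ``$\s^2\times\s^2$ and $\s^2\widetilde{\times}\s^2$ handle-cancel spinfully'' is false: $\s^2\widetilde{\times}\s^2 \cong \CP^2 \# \overline{\CP^2}$ is not spin, so $W_{i,j}$ is not spin when $j>0$ and your zero-signature spin embedding criterion does not apply.

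There is also a point you do not address at all: the adjective \emph{simple} in Saeki's statement. In his setting this constrains the connectivity of the fiber (for a fibered $3$-knot in $\s^5$ it forces the page to be $1$-connected). Your Stage 1 produces an arbitrary spin filling $V$ of $\widetilde{M}$, which need not be simply connected, and nothing in your Stage 2 restores that. Saeki's actual argument constructs the page directly, starting from an explicit surgery presentation of $\widetilde{M}$, so that both the spin condition and the connectivity of the fiber are controlled from the outset; beginning with a generic spin nullcobordism and hoping to stabilize it into shape does not obviously achieve this.
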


      \noindent Thus, there exists an open book decomposition of $\s^5$ with a spin page $V^4$ and binding $\partial V = D(M)$. We identify a collar neighborhood of $D(M)$ in $V$ with $D(M) \times [0,1]$ such that $D(M) \times \{0\}= \partial V$. We construct a proper embedding of $M$ in $D(M) \times [0,1]$ by pushing $M \subset D(M) \times \{0\}$ into the interior of $D(M) \times [0,1]$, fixing the boundary $\partial M$ pointwise. Since the monodromy on page $V$ is identity near the boundary, this gives a spun embedding of $N^4$ in $\s^5$.
      
      \noindent  Hsueh \cite{hsueh1} has shown every $4$ dimensional open book of the form $\ob(M^3, id)$ admits an open book decomposition with page a $3$-d handlebody. Theorem \ref{thms5} shows that the converse is not true, as any $4$ manifold that does not embed in $\s^5$ cannot admit an open book decomposition with identity monodormy.

	\section{Non-triviality of twist map along a non-separating sphere} \label{twistnontriviality}

	\noindent Let $\phi_1 \in \Diff_\partial(\s^1 \times \D^n)$ and $\phi_2 \in \Diff_\partial(\s^n \times[0,1])$. To prove Theorem \ref{thm2} we need the following lemma.

	\begin{lemma} \label{pageplumb}
	
	$\ob(\s^1 \times \D^n \S \s^n \times [0,1], \phi_1\circ \phi_2) = \ob(\s^1 \times \D^n,\phi_1) \# \ob(\s^n \times [0,1],\phi_2)$.
	
	\end{lemma}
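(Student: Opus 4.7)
The plan is to decompose $\ob(P_1 \#_b P_2,\, \phi_1 \circ \phi_2)$, where $P_1 = \s^1 \times \D^n$ and $P_2 = \s^n \times [0,1]$, into two halves joined by a standard connected-sum tube $\s^n \times [0,1]$; each half will be shown to equal $\ob(P_i,\phi_i)$ with an open $(n+1)$-ball removed around a binding point. This is the standard mechanism by which boundary connected sum of pages corresponds to connected sum of open book manifolds; the task is to spell it out in this setting.

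First I would model $P_1 \#_b P_2$ as $P_1 \cup_{D^{n-1}} (D^{n-1}\times[0,1]) \cup_{D^{n-1}} P_2$, attaching a $1$-handle along small disks in $\partial P_1$ and $\partial P_2$ chosen inside regions where $\phi_1$ and $\phi_2$ are already the identity. Extending $\phi_i$ by the identity outside $P_i$ yields commuting diffeomorphisms of $P_1 \#_b P_2$ with disjoint supports, so $\phi_1 \circ \phi_2$ is unambiguously defined. Since the two extended maps have disjoint support separated by the $1$-handle on which both are identity, the mapping torus splits as
\begin{equation*}
\mathcal{MT}(P_1\#_b P_2,\, \phi_1\circ \phi_2) \;=\; \mathcal{MT}(P_1,\phi_1) \,\cup\, \bigl(D^{n-1}\times[0,1]\times\s^1\bigr) \,\cup\, \mathcal{MT}(P_2,\phi_2),
\end{equation*}
glued along two copies of $D^{n-1}\times \s^1$.

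Next, I would observe that the boundary connected sum creates a new binding component $\s^{n-2}\times[0,1]$ (the lateral surface of the $1$-handle), and that capping it with $\s^{n-2}\times[0,1]\times\D^2$ combines with the middle mapping-torus piece to form
\begin{equation*}
\bigl(D^{n-1}\times[0,1]\times \s^1\bigr) \,\cup_{\s^{n-2}\times[0,1]\times \s^1}\, \bigl(\s^{n-2}\times[0,1]\times\D^2\bigr) \;\cong\; [0,1]\times\partial(D^{n-1}\times\D^2) \;\cong\; \s^n\times[0,1],
\end{equation*}
which is exactly the neck of a connected sum. What remains on either side is $\mathcal{MT}(P_i,\phi_i) \cup (\partial P_i \setminus D^{n-1})\times\D^2$, which is $\ob(P_i,\phi_i)$ with a tubular $(n+1)$-ball neighborhood of a binding point (lying inside the excised gluing disk) removed; the boundary of this removed ball is precisely the $\s^n$ where the neck attaches. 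Putting the three pieces together yields the desired identification $\ob(P_1 \#_b P_2,\phi_1\circ \phi_2)\cong \ob(P_1,\phi_1)\# \ob(P_2,\phi_2)$. The main care to be exercised lies in tracking how the three mapping-torus pieces and the binding caps fit together to reassemble into the standard $\s^n\times[0,1]$ connected-sum tube; this geometric identification is the only nontrivial step.
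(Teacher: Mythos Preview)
Your argument is clean, but it proves the wrong statement: you have interpreted the page operation as the boundary connected sum $\#_b$, whereas the symbol in the lemma denotes \emph{plumbing}. In the paper's setting one identifies a product neighbourhood $\D^1\times\D^n\subset\s^1\times\D^n$ of a cocore disk with a neighbourhood $\D^n\times\D^1\subset\s^n\times[0,1]$ of a fibre arc via $(x,\vec y)\mapsto(-\vec y,x)$. These two operations produce genuinely different pages: for instance (with the shift $n\to n-1$ used in the application), $\s^1\times\D^{n-1}$ plumbed with $\s^{n-1}\times[0,1]$ is $\s^1\times\s^{n-1}\setminus\operatorname{int}(\D^n)$, which has a single $\s^{n-1}$ boundary component, while the boundary connected sum has boundary $\s^1\times\s^{n-2}\sqcup\s^{n-1}$. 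Since Theorem~\ref{thm2} requires precisely the plumbed page, your lemma as written would not feed into that proof.

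The mechanism you use also breaks down for plumbing. Your splitting of the mapping torus relies on the existence of a separating collar $D^{n-1}\times[0,1]$ on which both $\phi_1$ and $\phi_2$ are the identity; in a plumbing the two summands overlap on a full-dimensional block $\D^1\times\D^n$ rather than being joined by a lower-dimensional $1$-handle, so there is no such separating slab. The paper's proof instead realises the plumbed open book by deleting from each mapping cylinder a half-interval piece $\D^1\times\D^n\times(\tfrac12,1)$ (resp.\ $\D^n\times\D^1\times(0,\tfrac12)$), arranging the monodromies to act at levels $\tfrac14$ and $\tfrac34$ so they miss the deleted regions, and then checking directly that the resulting gluing region is a copy of $\s^{n+1}$ (decomposed as $\s^{n-1}\times\D^2\cup\D^n\times\s^1$), so that the two deletions are honest $(n+2)$-ball removals. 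That identification of the gluing sphere is the substantive step, and it has no analogue in the boundary-connected-sum picture you set up.
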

	
	\noindent Lemma \ref{pageplumb} is well-known for $n=2$. We follow a proof of the $2$-dimensional case given by Etnyre (see \cite{OP}).

	\begin{figure}[htbp] 
		
		\centering
		\def\svgwidth{14cm}
		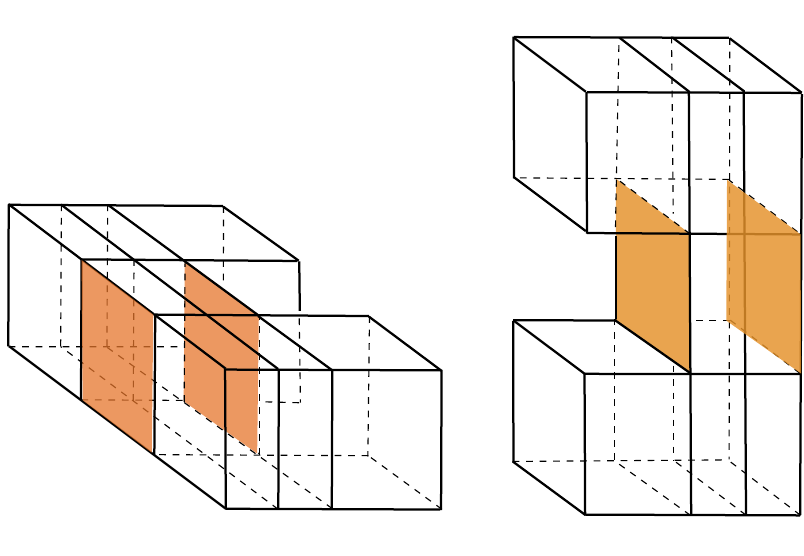
		\caption{The shaded 2d boxes specify the gluing regions after removal of two $(n+1)$-disks from $\ob(\s^1 \times \D^{n-1},\phi_1)$ and $\ob(\s^n \times [0,1],\phi_2)$, respectively.}
		\label{figms}
		
	\end{figure}

\begin{proof}[Proof of Lemma \ref{pageplumb}]
	
	Let us consider neighborhoods of the plumbing regions from both $\s^1 \times \D^n$ and $\s^n \times [0,1]$. During plumbing the two common regions diffeomorphic to $\D^1 \times \D^n$ are identified via the map $(x,\vec{y}) \mapsto (- \vec{y}, x)$. We think of the open book mapping tori as mapping cylinders, where two ends of a cylinder are identified via the identity map. The monodromy in the first mapping cylinder is applied at level $\frac{1}{4}$ of the interval $[0,1]$, whereas the the monodromy in the second mapping cylinder is applied at level $\frac{3}{4}$. To obtained the open book with plumbed pages, we remove $\D^1 \times \D^n \times (\frac{1}{2},1)$ and $\D^n \times \D^1 \times (0,\frac{1}{2})$ from the mapping cylinders and glue them along the shaded regions (see Figure \ref{figms}). Removing $\D^1 \times \D^n \times (\frac{1}{2},1)$ and $\D^n \times \D^1 \times (0,\frac{1}{2})$ removes open $(n+2)$-balls from the total spaces of $\ob(\s^1 \times \D^n,\phi_1)$ and $\ob(\s^n \times [0,1],\phi_2)$. It is enough to show that the shaded gluing regions are actually copies of $\s^{n+1}$. 
	
	\noindent Consider the shaded attaching region in the mapping cylinder shown on the right of Figure \ref{figms}. The green shaded region is given by $\partial \D^n \times \D^1 \times [0,\frac{1}{2}]$. The corner $\partial \D^n \times \partial \D^1 \times [0,1]$ belongs to the binding after plumbing. Thus it contributes the region $\partial \D^n \times \D^1 \times D$, where $D$ is a disk bound by a boundary circle $\{*\} \times \frac{[0,1]}{0 \sim 1}$ in the open book. These two regions gives us $R_1 = \partial \D^n \times \D^1 \times [0, \frac{1}{2}] \cup \partial \D^n \times \partial \D^1 \times D = (\partial \D^n \times \{\D^1 \times ([0,\frac{1}{2}] \cup [\frac{1}{2},1])  \cup \partial \D^1 \times D\}) \setminus (\partial \D^n \times \D^1 \times [\frac{1}{2},1]) = \partial \D^n \times \{\partial(\D^1 \times D) \setminus \D^1 \times [\frac{1}{2},1]\} \cong \s^{n-1} \times \D^2$.
	
	\noindent Now consider the attaching regions shaded violet and orange. These give the region $R_2 = \D^n \times \D^1 \times \{\frac{1}{2},1\} \cup \D^n \times \partial \D^1 \times [\frac{1}{2},1] = \D^n \times \partial(\D^1 \times [\frac{1}{2},1])$. It is clear that $R_1$ and $R_2$ shares a common boundary and $R_1 \cup_\partial R_2 \cong \s^{n-1} \times \D^2 \cup \D^n \times \s^1 = \s^{n+1}$. Since the monodromies $\phi_1$ and $\phi_2$ are applied away from the gluing regions (except near the boundary of pages, where they restrict to the identity map), the lemma follows.		
	
\end{proof}

	\begin{proof}[Proof of Theorem \ref{thm2}]
		
	 We note that $\s^1 \times \s^{n-1} \setminus int(\D^n) = \s^1 \times \D^{n-1} \S \s^{n-1} \times [0,1]$. Then, by Lemma \ref{pageplumb}, $\ob(\s^1 \times \D^{n-1} \S \s^{n-1} \times [0,1],\sigma_{n-1}) = \ob(\s^1 \times \D^{n-1},id) \# \ob(\s^{n-1} \times [0,1], \sigma_{n-1}) = \s^1 \times \s^n \# \s^2 \widetilde{\times} \s^{n-1}$. Let us assume that $\sigma_{n-1} \in Diff^{+}_\partial (\s^1 \times \s^{n-1} \setminus int (\D^n))$ is isotopic to identity. Then, $\ob(\s^1 \times \s^{n-1} \setminus int(\D^n), \sigma_{n-1}) = \s^1 \times \s^n \# \s^2 \times \s^{n-1}$, which embeds in $\R^{n+3}$. On the other hand, since $\s^2 \widetilde{\times} \s^{n-1}$ is not spin, $\s^1 \times \s^n \# \s^2 \widetilde{\times} \s^{n-1}$ does not embed in $\R^{n+3}$. This gives a contradiction. Therefore, $\sigma_{n-1}$ is non-trivial in $Diff^{+}_\partial (\s^1 \times \s^{n-1} \setminus int (\D^n))$. Now, $\ob(\s^1 \times \s^{n-1} \# V, \sigma_{n-1})= \ob((\s^1 \times \s^{n-1} \setminus int (\D^n)) \#_b V, \sigma_{n-1}) = \s^1 \times \s^n \# \s^2 \widetilde{\times} \s^{n-1} \# \partial (V \times \D^2)$. Since $\partial (V \times \D^2)$ embeds in $\R^{n+3}$, a similar argument as above implies that $\sigma_{n-1}$ is non-trivial in $Diff^{+}_\partial (\s^1 \times \s^{n-1} \# V)$.
		
	\end{proof}


\begin{thebibliography}{Reference}
		
		\bibitem{AGL} P. Aceto, M. Golla, K. Larson, {\it Embedding 3-manifolds in spin 4-manifolds}, Journal of Topology, vol. 10(2), 301-323, 2017.
		
		\bibitem{Al} J. Alexander, {\it A lemma on systems of knotted curves}, Proc. Nat. Acad. Sci., vol. 9, (1923), 93--95
		
		\bibitem{bm} S. Baader, F. Misev, {\it On the stabilization height of fiber surfaces in $\s^3$}, Journal of Knot Theory and Its Ramifications, vol. 27, No. 3 (2018).
		
		\bibitem{bbp} T. Brendle, N. Broaddus and A. Putman, {\it The mapping class group of connect sums of $\s^1 \times \s^2$}, Trans. Amer. Math. Soc. 376 (2023), 2557-2572.
		
		\bibitem{BG} R. Budney and D. Gabai, {\it On the automorphism groups of the hyperbolic manifolds}, arXiv:2303.05010v1 [math.GT], 2023.
		
		\bibitem{Coch} T. Cochran, {\it Embedding $4$-manifolds in $\s^5$}, Topology, vol. 23(3), 257-269,1984.
		
		\bibitem{EdL} A. L. Edmonds, C. Livingston, {\it Embedding punctured lens spaces in four-manifolds}, Comment. Math. Helvitici 71 (1996) 169-191. 
		
		\bibitem{Et0} J. B. Etnyre, {\it Lectures on open book decompositions and contact structures},  arXiv:math/0409402v3 [math.SG], 2005.
		
		\bibitem{EL} J. B. Etnyre  and Y. Lekili, {\it Embedding all contact $3$--manifolds in a fixed contact $5$--manifold}, Journal of the London Mathematical Society, Volume 99, Issue 1, February 2019, 52-68.
		
		\bibitem{GLS} P. Ghiggini, P. Lisca, A. I. Stipsicz, {\it Tight contact structures on some small Seifert fibered 3-manifolds}, American Journal of Mathematics, 129(5), 1403-1447 (2007).
		
			\bibitem{gs} R. E. Gompf, A. I. Stipsicz, {\it 4-manifolds and Kirby calculus}, Graduate Studies in Mathematics (vol. 20), AMS.
		
		
		\bibitem{hw} A. Hatcher, N. Wahl, {\it Stabilization for mapping class groups of 3-manifolds}, Duke Math. J. 155(2): 205-269, 2010.
		
		\bibitem{hsueh1} C-S Hsueh, {\it Kirby diagrams of 4-dimensional open books}, Proceedings of the Edinburgh Mathematical Society (2024),1-29.
		
		\bibitem{hsueh2} C-S Hsueh, {\it Handle decompositions and stabilizations of open books}, arXiv:2505.19343v1 [math.GT], 2025.
		
		\bibitem{tk} T. Kastenholz, {\it Simplicial volume of open books in dimension 4},  arXiv:2504.10975v1 [math.GT], 2025.
		
		
		\bibitem{KegSc} M. Kegel, F. Schmaschke, {\it Trisecting a $4$-dimensional open book into three chapters}, arXiv:2304.12250v3 [math.GT] , 2024.
		
		
		
		\bibitem{laud1} F. Laudenbach, {\it Sur les 2-spheres dune variete de dimension 3}, Ann. of Math. (2) 97 (1973), 57–81.
		
		 \bibitem{laud2}F. Laudenbach, {\it Topologie de la dimension trois: homotopie et isotopie}, Societe Mathematique de France, Paris, 1974.
		
		\bibitem{ls1} S. Lawande and K. Saha, {\it Surfaces in 4-manifolds and extendible mapping classes}. https://arxiv.org/abs/2502.17640v2, 2025
		
		\bibitem{La} T. Lawson, Open book decomposition for odd dimensional manifolds, {\it Topology}, vol 17, (1979), 189--192. 
		
		\bibitem{lickorish} W. B. R. Lickorish, {\it A representation of orientable combinatorial $3$-manifolds}, Ann. Math. 76 (1962), 531-540.
		
		\bibitem{OP} Burak Ozbagci, Patrick Popescu-Pampu, {\it Generalized plumbings and murasugi sums}, Arnold Math J. (2016) 2:69–119.
		
		\bibitem{onaran} S. Onaran, {\it Planar open book decompositions of $3$-manifolds}, Rocky Mountain Journal of Mathematics, 2014;4(5): 1621-1630.
		
		\bibitem{pps} D. M. Pancholi, S. Pandit, K. Saha: {\it Embedding of $3$-manifolds via open books}, Journal of the Ramanujan Mathematical Society 36 (3), 243-250, 2021.	
		
		\bibitem{Sk1} Osamu Saeki, {\it On simple fibered knots in $\s^5$ and the existence of decomposable algebraic $3$-knots}, Comment. Math. Helvetici 62 (1987) 587-601.
		
		\bibitem{saha} K. Saha, {\it On open book embedding of contact manifolds in the standard contact sphere}. Canadian Math. Bull. 2020;63(4):755-770.
		
		\bibitem{Sch} S. Sch{\"o}nenberger, {\it Planar open books and symplectic fillings}, PhD. Thesis (2005), University of Pennsylvania.
		
		\bibitem{wallace} A. H. Wallace, {\it Modifications and cobounding manifolds}, Canadian J. Math. 12 (1960), 503-528. 
		
		
		
		
	
		
		
		
		
	\end{thebibliography}
\end{document}